\definecolor{cmyk}{cmyk}{.0,.4,.9,.5}						
\definecolor{orcidlogocol}{HTML}{A6CE39}
\tikzset{
  orcidlogo/.pic={
    \fill[orcidlogocol] svg{M256,128c0,70.7-57.3,128-128,128C57.3,256,0,198.7,0,128C0,57.3,57.3,0,128,0C198.7,0,256,57.3,256,128z};
    \fill[white] svg{M86.3,186.2H70.9V79.1h15.4v48.4V186.2z}
                 svg{M108.9,79.1h41.6c39.6,0,57,28.3,57,53.6c0,27.5-21.5,53.6-56.8,53.6h-41.8V79.1z M124.3,172.4h24.5c34.9,0,42.9-26.5,42.9-39.7c0-21.5-13.7-39.7-43.7-39.7h-23.7V172.4z}
                 svg{M88.7,56.8c0,5.5-4.5,10.1-10.1,10.1c-5.6,0-10.1-4.6-10.1-10.1c0-5.6,4.5-10.1,10.1-10.1C84.2,46.7,88.7,51.3,88.7,56.8z};
  }
}
\newcommand\orcidicon[1]{\href{https://orcid.org/#1}{\mbox{\scalerel*{
\begin{tikzpicture}[yscale=-1,transform shape]
\pic{orcidlogo};
\end{tikzpicture}
}{|}}}}
\newcommand{\E}{\mathcal{E}}
\newcommand{\X}{\mathcal{X}}
\newcommand{\Ag}{\mathfrak{A}}\newcommand{\Bg}{\mathfrak{B}}
\newcommand{\Dg}{\mathfrak{D}}
\newcommand{\Fg}{\mathfrak{F}}\newcommand{\Gg}{\mathfrak{G}}
\newcommand{\Hg}{\mathfrak{H}}
\newcommand{\Ig}{\mathfrak{I}} 
\newcommand{\Lg}{\mathfrak{L}}
\newcommand{\Mg}{\mathfrak{M}}
\newcommand{\Pg}{\mathfrak{P}}
\newcommand{\Ug}{\mathfrak{U}}\newcommand{\Wg}{\mathfrak{W}}
\newcommand{\NN}{{\mathbb{N}}}
\newcommand{\As}{{\mathscr{A}}}\newcommand{\Bs}{{\mathscr{B}}}\newcommand{\Cs}{{\mathscr{C}}}
\newcommand{\Ds}{{\mathscr{D}}}
\newcommand{\Es}{{\mathscr{E}}}
\newcommand{\Ms}{{\mathscr{M}}}
\newcommand{\Ws}{{\mathscr{W}}}
\newcommand{\Xs}{{\mathscr{X}}}
\DeclareFontFamily{U}{rsfs}{\skewchar\font127 }
\DeclareFontShape{U}{rsfs}{m}{n}{%
   <5> <6> rsfs5
   <7> rsfs7
   <8> <9> <10> <10.95> <12> <14.4> <17.28> <20.74> <24.88> rsfs10
}{}
\DeclareSymbolFont{rsfs}{U}{rsfs}{m}{n} 
\DeclareSymbolFontAlphabet{\scr}{rsfs}
\newcommand{\Bf}{\scr{B}}\newcommand{\Cf}{\scr{C}}\newcommand{\Ef}{\scr{E}}
\newcommand{\Kf}{\scr{K}}\newcommand{\Mf}{\scr{M}}\newcommand{\Of}{\scr{O}} 
\newcommand{\Qf}{\scr{Q}}
\DeclareMathOperator{\id}{Id}
\DeclareMathOperator{\Hom}{Hom}
\DeclareMathOperator{\Par}{Par}
\renewcommand{\emph}{\textbf} 										
\newcommand{\cj}[1]{\overline{#1}}									
\newcommand{\imp}{\Rightarrow}										
\newcommand{\st}{\ : \ }											
\newcommand{\hlink}[2]{\href{#1}{\texttt{#2}}} 						
\newcommand{\xqedhere}[2]{%
  \rlap{\hbox to#1{\hfil\llap{\ensuremath{#2}}}}}
\newcommand{\xqed}[1]{%
  \leavevmode\unskip\penalty9999 \hbox{}\nobreak\hfill
  \quad\hbox{\ensuremath{#1}}}
\theoremstyle{plain}
\newtheorem{theorem}{Theorem}[section]							
\newtheorem{corollary}[theorem]{Corollary}
\newtheorem{lemma}[theorem]{Lemma}
\newtheorem{proposition}[theorem]{Proposition}
\newtheorem{definition}[theorem]{Definition}
\theoremstyle{definition} 
\newtheorem{remark}[theorem]{Remark}
\numberwithin{equation}{section}  									
\title{\textbf{Involutive Weak Globular $\omega$-categories}}
\author{\normalsize  
\orcidicon{0000-0002-3891-7717} Paratat Bejrakarbum$^a$, 
\orcidicon{0000-0002-1387-9283} Paolo Bertozzini$^b$   
\\  
\normalsize \textit{Department of Mathematics and Statistics, Faculty of Science and Technology,}
\\
\normalsize \textit{Thammasat University, Pathumthani 12121, Thailand}
\\ 
\normalsize e-mail: 
\ 
$^a$ \texttt{paratat.bejr@dome.tu.ac.th}
\quad 
$^b$ \texttt{paolo.th@gmail.com} 
}
\date{\normalsize{26 March 2023
\quad 
started: 01 October 2020 
%
%
}}
\begin{document}

\maketitle

\begin{center}
\textit{
Dedicated to the memory of Francis William Lawvere
} 
\end{center} 

\begin{abstract} \noindent 
We investigate the notion of \textit{involutive} weak globular $\omega$-categories via T.Leinster's approach: as algebras for the initial contracted globular 
operad in the bicategory of globular collections induced by the Cartesian monad of the free \textit{involutive} strict $\omega$-category functor on globular $\omega$-sets. 
An apparently more restrictive notion of involutive weak globular $\omega$-categories as algebras for the initial \textit{operadic-contraction} in the bicategory of globular \textit{contracted-collections} induced by the previous Cartesian monad (where here the operadic multiplications and units satisfy further compatibility axioms with the contractions) is also considered. 

\medskip

\noindent
\emph{Keywords:} 
Higher Category, Involutive Category, Operad, Monad.

\smallskip

\noindent
\emph{MCS-2020:} 
					18N65, 
					18N70, 
					18M40, 
					18N30, 
					18N99. 
\end{abstract}

\tableofcontents

\newpage 

\section{Introduction and Motivation} \label{sec: intro}

Category theory, since its inception in~\cite{EM45}, has always been evolving in very close connection with algebraic topology. The interlink between these two subjects became even more substantial with the development of higher category theory (among the several resources available, see~\cite{ChLa04}, \cite[pages 19-30]{Lei04} for an introductory discussion, \cite{Ba97,BaDo95} for original motivations also from physics and the wiki-site~\href{http://ncatlab.org/nlab}{http://ncatlab.org/nlab} for further details). 

\medskip 

Attempts to test the architecture of higher category theory within non-commutative topology are still in their prenatal stage (see for example~\cite{BCLS20} where only some ``non-commutative'' variants of strict $n$-categories have been considered). 

\medskip

Non-commutative topology is notoriously dominated by the central role of C*-algebras as an arena generalizing the well-known Gel'fand-Na\u\i mark duality between commutative unital C*-algebras and compact Hausdorff topologies.  
As a first minimal attempt to elaborate categorical environments capable of supporting non-commutative homotopy/cobordism and define weak notions of (higher) C*-categories, one would like to axiomatize the existence of (higher) involutions, vertically categorifying, in a weak environment, several already known notions of involutive categories (see~\cite{Ya20} and~\cite[section~4]{BCLS20} for further references).  

\medskip 

In our previous joint paper~\cite{BeBe17} we have been providing a definition of weak involutive $\omega$-category as an algebra for the free involutive $\omega$-category monad in the spirit of J.Penon's algebraic definition of weak-$\omega$-categories~\cite{Pe99}. 
Our ideological point of view is to consider involutions not (only) as symmetries of a (higher) categorical structure, but as unary operations on the very same footing of the binary compositions and nullary identities present in ordinary categories.

\begin{itemize}
\item[$\rightsquigarrow$]
\textit{
The purpose of this work is to produce an algebraic definition of involutive weak globular $\omega$-categories following an operadic definition in the style of~\cite{Lei04}. 
}
\end{itemize}

For now (for simplicity) we limited ourselves to the usual axiomatic setting of (weak) higher categories, although we plan to further develop our work in the direction of non-commutative exchange~\cite[section 3.3]{BCLS20} in view of application to operator algebraic structures. 

\medskip 

In the original treatment from T.Leinster, the operadic and contraction structures introduced onto a given globular $T$-collection are essentially independent; on the other side, terminal objects in the category of globular $T$-collections seem to be naturally contracted $T$-operads that further satisfy compatibility axioms between operadic multiplication/unit and contraction.  

\begin{itemize}
\item[$\rightsquigarrow$]
\textit{
We put forward a more restrictive notion of weak (involutive) globular $\omega$-category as an algebra for an initial ``operadic-contraction'', that is universal among those contracted operads whose multiplication and unit satisfy additional compatibility conditions with the contraction. 
}
\end{itemize}

\medskip 

As a first-aid motivation for readers that might not be familiar with the intricacies of operadic definitions of weak $\omega$-categories, we provide here below a brief synopsis of the construction:
\begin{itemize}
\item[$\blacktriangleright$]
one first introduces strict (involutive) $\omega$-categories and constructs the Cartesian monad $\hat{T}$ (respectively $\hat{T}^\star$ in our involutive case) induced by the free (involutive) $\omega$-category functor, 
\item[$\blacktriangleright$] 
the monad $\hat{T}$ (respectively $\hat{T}^\star$) applied to the the terminal globular $\omega$-set $\bullet$ specifies the input-type ``arity'' of general operations to be axiomatized via operads, 
\item[$\blacktriangleright$]
to the Cartesian monad $\hat{T}$ (respectively $\hat{T}^\star$) a bicategory $\Ef_{\hat{T}}$ (respectively $\Ef_{\hat{T}^\star}$) is associated whose \hbox{1-cells} ${E}\xleftarrow{t_M}M\xrightarrow{s_M}\hat{T}(E)$ (respectively ${E}\xleftarrow{t_M}M\xrightarrow{s_M}\hat{T}^\star(E)$) represent systems ``labeling the multi-input one-target operations'' with source parametrized by $\hat{T}(E)$ (respectively by $\hat{T}^\star(E)$) and target in $E$, 
\item[$\blacktriangleright$] 
generalized $\hat{T}$-multicategories (respectively $\hat{T}^\star$-multicategories) are defined as monads in the previous bicategory and generalized $\hat{T}$-operads are just generalized $\hat{T}$-multicategories whose labeling is provided by $\hat{T}(\bullet)$ (respectively by $\hat{T}^\star(\bullet)$), 
\item[$\blacktriangleright$] 
the actual unbiased description of the evaluation of all the operations involved into the definition of a weak globular $\omega$-category and of their coherence structure are together uniquely specified by a choice of contraction on a $\hat{T}$-operad (respectively on a $\hat{T}^\star$-operad),  
\item[$\blacktriangleright$] 
the monad $L$ (respectively $L^\star$) that is the initial/universal contracted $\hat{T}$-operad (respectively contracted $\hat{T}^\star$-operad) is supposed to specify the labeling of operations, in a weak (involutive) globular $\omega$-category, with certain $\hat{T}(\bullet)$ (respectively $\hat{T}^\star(\bullet)$) inputs and describe their formal compositions and identities, 
\item[$\blacktriangleright$] 
algebras for the initial contracted $\hat{T}$-operad $L$ (respectively for initial contracted $\hat{T}^\star$-operad $L^\star$) are the actual weak (involutive) $\omega$-categories,  
\item[$\blacktriangleright$]  
any contracted $\hat{T}$-operad (respectively contracted $\hat{T}^\star$-operad) $P$ produces a strict-functor from algebras over $P$ to algebras over $L$ (respectively over $L^\star$), that by definition are the weak (involutive) $\omega$-categories: to give an example of weak (involutive) $\omega$-category is equivalent to provide an algebra for a contracted $\hat{T}$-operad (respectively contracted $\hat{T}^\star$-operad) $P$. 
\end{itemize}

Notice that in our involutive case: the involution is used to specify the input type monad $\hat{T}^\star$, 
it is not used to compose formal labeled operations and hence (apart from the labeling input $\hat{T}^\star(\bullet)$) it does not modify the definition of the monad underlying the definition of initial operad $L^\star$: there is usually no involution on the collection of operations making up $L^\star$ (the operad only takes care of the nesting of operations); 
involutions and their evaluations are instead hidden in the choice of contraction that explicitly depends on the base labeling via $\hat{T}^\star(\bullet)$ in place of $\hat{T}(\bullet)$. 

\bigskip 

The content of the paper consists of this brief motivational introduction section~\ref{sec: intro} followed by a section~\ref{sec: pre} of preliminaries, where we recall (in a notation compatible with our previous work) already available material on strict $\omega$-categories, monads in bicategories and T.Leinster's construction of weak $\omega$-categories as algebras for a certain generalized operad. 

\medskip 

Section~\ref{sec: invo} of the paper opens recalling our previously developed definition of \textit{involutive} strict $\omega$-category and continues exposing the new material on an operadic definition of \textit{involutive weak $\omega$-categories} as algebras for a generalized initial  $\hat{T}^\star$-operad in the bicategory of $\hat{T}^\star$-collections, where $\hat{T}^\star$ is the monad of the free involutive strict $\omega$-category construction presented in~\cite[propositions~3.1 and 3.2]{BeBe17}. 

\medskip 

Our main existence theorem~\ref{th: L*} is obtained from a direct procedure, detailed in theorem~\ref{th: free-c-T*}, explicitly constructing by recursion a \textit{free contracted $\hat{T}^\star$-operadic magma} and quotienting it in order to obtain a \textit{free contracted $\hat{T}^\star$-operad} over a $\hat{T}^\star$-collection. 

\medskip 

The more restrictive notion of \textit{operadic-contraction} (in place of the more general contracted-operads considered in~\cite[definition 9.2.1]{Lei04}) is introduced in remark~\ref{rem: ope-con}, where we also mention the possibility to utilize them to define a tighter variant of Leinster's algebraic notion of weak globular $\omega$-categories. 
A parallel treatment of this issue in the involutive case is described in remarks~\ref{rem: ope-con*} and~\ref{rem: ope-con-fin}; the terminal operadic-contraction $\hat{T}^\star(\bullet)$ is examined in detail in remark~\ref{rem: q-cong}.

\medskip 

We close the work in section~\ref{sec: out} with some outlook on possible further work in the direction of weak higher C*-categories and higher categorical non-commutative geometry. 

\section{Preliminaries}\label{sec: pre}

This section is dedicated to a description of all the long background material necessary to formulate algebraic operadic notions of weak $\omega$-categories; most of the material is directly inspired by~\cite{Lei04}.  

\medskip 

Before starting, a foundational disclaimer: although no set-theoretical contradiction will emerge in this work, formally (especially in section~\ref{sec: multicategories}) we will use monads internal to a bicategory of non-small categories \footnote{
A simple solution would be to consider a set-theory based on classes of at least ``3 types'' (2-classes consisting of elements called \hbox{1-classes}, whose elements are called 0-classes and identified as sets) suitably formulating the axiom of ``class-formation'' in such a way that, for $k\in\{2,1,0\}$, ``proper classes'' of level $k$ cannot be elements of classes of level strictly less than $k$.
}

\subsection{Strict Globular $\omega$-categories} 

We recall the formalism and definition of strict globular $\omega$-categories as used in~\cite{BeBe17}. 
\begin{definition}\label{def: omega-cat}
An \emph{$\omega$-quiver} 
$Q^0 \overset{s^0}{\underset{t^0}{\leftleftarrows}} Q^1 \overset{s^1}{\underset{t^1}{\leftleftarrows}} \cdots
\overset{s^{n-2}}{\underset{t^{n-2}}{\leftleftarrows}} Q^{n-1}\overset{s^{n-1}}{\underset{t^{n-1}}{\leftleftarrows}} Q^n \overset{s^n}{\underset{t^n}{\leftleftarrows}} \cdots$ 
consists of a sequence of sets $(Q^k)_{k\in\NN}$ equipped with a sequences of \emph{source} $(s^k)_{k\in\NN}$ and \emph{target} $(t^k)_{k\in\NN}$ maps. 

\medskip 

A \emph{globular $\omega$-set} is an $\omega$-quiver that satisfies the \emph{globularity conditions}:
\begin{equation*}
s^{k}\circ s^{k+1}=s^{k}\circ t^{k+1},
\quad 
t^{k}\circ s^{k+1}=t^{k}\circ t^{k+1}, 
\quad
\forall k\in\NN. 
\end{equation*}
For any $k\in\NN$, an element $x\in Q^k$ is called a \emph{globular $k$-cell} of the globular $\omega$-set. 

\medskip 

A globular $\omega$-set is \emph{reflexive} if it is equipped with a sequence $(\iota^k)_{k\in\NN}$ of maps 
$Q^0 \overset{\iota^0}{\rightarrow} Q^1 \overset{\iota^1}{\rightarrow} \cdots \overset{\iota^{n-1}}{\rightarrow}
Q^n \overset{\iota^n}{\rightarrow} \cdots$ such that $s^k\circ \iota^k=\id_{Q^k}=t^k\circ \iota^k$ for every $k\in \NN$.

\medskip 

A \emph{(reflexive) globular $\omega$-magma} is a (reflexive) globular $\omega$-set equipped with a family of \emph{compositions} 
\begin{gather*}
\circ^m_p: Q^m\times_{Q^p} Q^m\to Q^m, \quad 
(x',x)\mapsto x'\circ^m_p x, 
\quad \forall m\in\NN_0, \quad 0\leq p<m, 
\quad \text{defined on the sets} 
\\
Q^m\times_{Q^p} Q^m:=\Big\{(x',x)\in Q^m\times Q^m \ | \ t^p\circ t^{p+1}\circ\cdots\circ t^{m-1}(x)=s^p\circ s^{p+1}\circ \cdots\circ s^{m-1}(x')\Big\},
\end{gather*} 
such that the following conditions hold: if $m\in\NN_0$,  $0\leq p<m$ and $(x',x)\in Q^m\times_{Q^p} Q^m$,
\begin{itemize}
\item[$\blacktriangleright$] 
$(s^q\circ s^{q+1}\circ\cdots \circ s^{m-1})(x'\circ^m_p x)=
\begin{cases}
(s^q\circ s^{q+1}\circ \cdots \circ s^{m-1})(x')\circ^q_p (s^q\circ s^{q+1}\circ \cdots \circ s^{m-1})(x), & q>p; 
\\
(s^q\circ s^{q+1}\circ \cdots \circ s^{m-1})(x'), & q\leq p. 
\end{cases}
$
\item[$\blacktriangleright$] 
$(t^q\circ t^{q+1}\circ \cdots \circ t^{m-1})(x'\circ^m_p x)=
\begin{cases}
(t^q\circ t^{q+1}\circ \cdots\circ t^{m-1})(x')\circ^q_p (t^q\circ t^{q+1}\circ \cdots\circ t^{m-1})(x), & q>p; \\
(t^q\circ t^{q+1}\circ \cdots \circ t^{m-1})(x), & q\leq p. 
\end{cases}
$
\end{itemize}
A \emph{strict glubular $\omega$-category} $(\Cs,s,t,\iota,\circ)$ is a reflexive globular $\omega$-magma 
\begin{equation*}
\xymatrix{
\Cs^0 \ar[rr]|{\ \iota^0\ } & & \lltwocell^{t^0}_{s^0}{\omit} \Cs^1 \ar[rr]|{\ \iota^1\ }
& & \lltwocell^{t^1}_{s^1}{\omit} \Cs^1 \ar@{.}[r] &  
\Cs^{n} \ar[rr]|{\ \iota^{n}\ } & & \lltwocell^{t^n}_{s^n}{\omit} \Cs^{n+1} \ar@{.}[r] &  
}, 
\quad
\xymatrix{
\Cs^n\times_{\Cs^p}\Cs^n  \ar[r]^{\circ^n_p} & \Cs^n, 
}
\end{equation*}
that satisfies the following list of algebraic axioms: 
\begin{itemize}
\item[$\blacktriangleright$]
(\emph{associativity}) 
for all $p,m\in\NN$, such that $0\leq p<m$, and all $x,y,z\in \Cs^m$ with $(z,y),(y,x)\in \Cs^m\times_{\Cs^p}\Cs^m$: 
\begin{equation*}
(z\circ^m_py)\circ^m_px=z\circ^m_p(y\circ^m_px),
\end{equation*}
\item[$\blacktriangleright$] 
(\emph{unitality}) 
for all $p,m\in\NN$, such that $0\leq p<m$, and all $x\in \mathscr{C}^m$: 
\begin{equation*}
(\iota^{m-1}\circ \cdots\circ \iota^p\circ t^p\circ\cdots\circ t^{m-1})(x)\circ^m_px=x=x\circ^m_p (\iota^{m-1}\circ\cdots\circ \iota^p\circ s^p\circ\cdots\circ s^{m-1})(x),
\end{equation*}
\item[$\blacktriangleright$] 
(\emph{functoriality of identities}) 
for all $q,p\in\NN$, such that $0\leq q<p$, and all 
$(x',x)\in\mathscr{C}^p \times_{\mathscr{C}^q}\mathscr{C}^p$:
\begin{equation*}
\iota^p(x')\circ^{p+1}_q \iota^p(x)=\iota^{p}(x'\circ^p_qx),
\end{equation*} 
\item[$\blacktriangleright$] 
(\emph{binary exchange}) 
for all $q,p,m\in\NN$ such that $0\leq q<p<m$ and all $x,x',y,y'\in \mathscr{C}^m$ with
$(y',y),(x',x)\in \mathscr{C}^m\times_{\mathscr{C}^p}\mathscr{C}^m$ and
$(y',x'),(y,x)\in \mathscr{C}^m\times_{\mathscr{C}^q}\mathscr{C}^m$: 
\begin{equation*}
(y'\circ^m_py)\circ^m_q(x'\circ^m_px)=(y'\circ^m_qx')\circ^m_p(y\circ^m_qx), \quad 
\xymatrix{
\bullet \ruppertwocell{x} \rlowertwocell{x'} \ar[r] & \bullet \ruppertwocell{y} \rlowertwocell{y'} \ar[r] 
& \bullet 
}. 
\end{equation*}
\end{itemize}
A \emph{morphism of $\omega$-quivers} $Q\xrightarrow{\phi}\hat{Q}$ (respectively, of globular $\omega$-sets) is a sequence $(\phi^k)_{k\in\NN}$ of maps $Q^k\xrightarrow{\phi^k}\hat{Q}^k$ that, for any $q\in\NN$, satisfies any one of the following two alternative properties:
\begin{gather}\label{eq: cov}
\text{\emph{$q$-covariance}}:\phantom{ntra} \quad \quad \hat{s}^q\circ\phi^{q+1}=\phi^q\circ s^q, \quad \quad \hat{t}^q\circ\phi^{q+1}=\phi^q\circ t^q, 
\\ \label{eq: con}
\text{\emph{$q$-contravariance}}: \quad \quad \hat{t}^q\circ\phi^{q+1}=\phi^q\circ s^q, \quad \quad \hat{s}^q\circ\phi^{q+1}=\phi^q\circ t^q.  
\end{gather}
An index $q\in \NN$ satisfying~\eqref{eq: cov} (respectively~\eqref{eq: con}) is a \emph{$\phi$-covariance} 
(respectively \emph{$\phi$-contravariance}) index. 
 
\medskip 

A \emph{morphism of reflexive $\omega$-quivers} (respectively of reflexive globular $\omega$-sets) is also required to satisfy: 
\begin{equation*}
\forall k\in\NN \st \hat{\iota}^k\circ\phi^k=\phi^{k+1}\circ\iota^k.
\end{equation*} 

A \emph{morphism of (reflexive) globular $\omega$-magmas} is a morphism of (reflexive) globular $\omega$-sets that, for all $k,q\in \NN$ such that $0\leq q<k$, further satisfies:  
\begin{gather*}
\text{whenever $q$ is $\phi$-covariance index:\phantom{ntra}} \quad 
\phi^k(x\circ^k_q x') = \phi^k(x)\ \hat{\circ}^k_q\ \phi^k(x'), \quad  
\forall (x,x')\in Q^k\times_{Q^q}Q^k, 
\\ 
\text{whenever $q$ is $\phi$-contravariance index:} \quad 
\phi^k(x\circ^k_q x')=\phi^k(x')\ \hat{\circ}^k_q\ \phi^k(x), \quad   
\forall (x,x')\in Q^k\times_{Q^q}Q^k. 
\end{gather*}

\medskip 

An \emph{$\omega$-functor} between two strict globular $\omega$-categories is a morphism of their reflexive globular $\omega$-magmas.  
\end{definition}

\begin{remark}
Each one of the previous notions of ``morphism'' provides a strict 1-category where, given a ``composable pair of morphisms'' $Q\xrightarrow{\psi}\hat{Q}\xrightarrow{\phi}\tilde{Q}$, their composition $Q\xrightarrow{\phi\circ\psi}\tilde{Q}$ is defined componentwise: 
$(\phi^k)_{k\in\NN}\circ (\psi^k)_{k\in\NN}:=(\phi^k\circ\psi^k)_{k\in\NN}$; 
and, for any object $Q:=(Q^k)_{k\in\NN}$, its ``identity morphism'' is defined by $\iota(Q):=(\id_{Q^k})_{k\in\NN}$. 
\xqed{\lrcorner}
\end{remark}

The following result is well-known, see for example~\cite{Pe99} or~\cite[appendix~F]{Lei04}. 
\begin{proposition}\label{prop: free-omega-cat}
Let $\Qf$ denote the strict 1-category of covariant morphisms between globular $\omega$-sets and let $\Cf$ be the strict 1-category of covariant $\omega$-functors between strict globular $\omega$-categories.  

\medskip 

For any globular $\omega$-set $Q$ in $\Qf$, a \emph{free strict globular $\omega$-category over $Q$} is a morphism of globular $\omega$-sets $Q\xrightarrow{\eta_Q}\Ug(\Cs)$, into the underlying globular $\omega$-set $\Ug(\Cs)$ of a strict globular $\omega$-category $\Cs$, satisfying the following universal factorization property: 
for any morphism of globular $\omega$-sets $Q\xrightarrow{\phi}\Ug(\hat{\Cs})$, into the underlying globular $\omega$-set $\Ug(\hat{\Cs})$ of a strict globular $\omega$-category $\hat{\Cs}$, there exists a unique $\omega$-functor $\Cs\xrightarrow{\hat{\phi}}\hat{\Cs}$ such that $\phi=\hat{\phi}\circ\eta_Q$. 

\medskip 

The \emph{forgeful functor} $\Cf\xrightarrow{\Ug}\Qf$ (forgetting compositions and identities of objects in $\Cf$) admits a left-adjoint $\Fg\dashv \Ug$ \emph{free strict globular $\omega$-category functor} $\Cf\xleftarrow{\Fg}\Qf$ that is uniquely determined via a specific construction of free strict globular $\omega$-category $Q\xrightarrow{\eta}\Ug(\Cs)$ of a globular $\omega$-set $Q$ above.  
\end{proposition}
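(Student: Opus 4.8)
The plan is to prove Proposition~\ref{prop: free-omega-cat} by an explicit inductive construction of the free strict globular $\omega$-category $\Fg(Q)$ over a globular $\omega$-set $Q$, level by level, freely adjoining all formal composites and all formal identities and then quotienting by the congruence generated by the strict $\omega$-category axioms. First I would set up the construction of the underlying reflexive globular $\omega$-magma: given $Q=(Q^k)_{k\in\NN}$ with its source, target and (adjoined) identity maps, I would define inductively, for each $m$ and each $0\le p<m$, a set of \emph{formal pasting expressions} built from the cells of $Q$ below level $m$, the formal identities $\iota$, and the formal composites $\circ^m_p$, subject only to the constraints that the operands of $\circ^m_p$ be $p$-composable in the sense of the source/target formulas in Definition~\ref{def: omega-cat}. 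Concretely one builds the free magma $\mathrm{Mag}(Q)$ recursively: $\mathrm{Mag}(Q)^k$ contains $Q^k$, all $\iota^{k-1}$-images of $\mathrm{Mag}(Q)^{k-1}$, and is closed under each $\circ^k_p$ on the appropriate pullbacks; the source, target and identity maps are defined on generators and extended by the magma axioms so that the globularity conditions hold by construction.

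Next I would define $\Fg(Q)$ as the quotient of $\mathrm{Mag}(Q)$ by the smallest congruence of reflexive globular $\omega$-magmas that forces associativity, unitality, functoriality of identities, and binary exchange — i.e.\ the intersection of all congruences containing these relation schemes, which exists because congruences are closed under arbitrary intersection and the magma structure maps descend to any congruence quotient. One then checks that $\Fg(Q)$ is a strict globular $\omega$-category (the axioms hold by definition of the congruence, and the globularity/reflexivity survive the quotient because we quotiented by a congruence of reflexive globular $\omega$-magmas). The unit $\eta_Q:Q\to\Ug(\Fg(Q))$ is the evident map sending a $k$-cell of $Q$ to the class of its generator; it is a morphism of globular $\omega$-sets since source and target of a generator are computed in $\mathrm{Mag}(Q)$ exactly from those in $Q$.

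For the universal property, given any strict globular $\omega$-category $\hat{\Cs}$ and a morphism of globular $\omega$-sets $\phi:Q\to\Ug(\hat{\Cs})$, I would define $\hat{\phi}$ on $\mathrm{Mag}(Q)$ by structural recursion: send a generator $x\in Q^k$ to $\phi^k(x)$, send $\iota$ to $\iota$ in $\hat{\Cs}$, and send a formal composite $y'\circ^m_p y$ to $\hat{\phi}(y')\circ^m_p\hat{\phi}(y)$ — this is well-defined because $\phi$ preserves source and target, so $p$-composability of $(y',y)$ in $\mathrm{Mag}(Q)$ implies $p$-composability of $(\hat{\phi}(y'),\hat{\phi}(y))$ in $\hat{\Cs}$. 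Since $\hat{\Cs}$ satisfies all the strict $\omega$-category axioms, the kernel of $\hat{\phi}$ (as a map of magmas) is a congruence containing the defining relations, hence contains the congruence defining $\Fg(Q)$, so $\hat{\phi}$ factors through a unique $\omega$-functor $\Fg(Q)\to\hat{\Cs}$ with $\phi=\hat\phi\circ\eta_Q$; uniqueness is forced because $\eta_Q(Q)$ generates $\Fg(Q)$ as a reflexive globular $\omega$-magma and an $\omega$-functor is determined by its values on a generating set. Finally, the existence of left adjoint $\Fg\dashv\Ug$ follows formally: the assignment $Q\mapsto\Fg(Q)$ with units $\eta_Q$ is, by the universal factorization just established, a universal arrow from $Q$ to $\Ug$ for every $Q$, which is precisely the pointwise criterion for $\Ug$ to admit a left adjoint, and functoriality of $\Fg$ together with naturality of $\eta$ is then automatic.

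The main obstacle I expect is the bookkeeping in the inductive construction of $\mathrm{Mag}(Q)$ and the verification that source/target maps are consistently definable at every stage so that the globularity conditions and the source/target formulas of Definition~\ref{def: omega-cat} hold simultaneously — in particular making sure that the domain-of-definition sets $Q^m\times_{Q^p}Q^m$ used for the formal composites are themselves correctly computed from the already-constructed lower levels, and that the congruence used to pass to $\Fg(Q)$ is genuinely a congruence for \emph{all} the structure maps (all $\circ^m_p$, all $\iota^k$, all $s^k$, $t^k$) rather than just a level-wise equivalence relation; once this is set up carefully, the universal property is a routine structural-recursion argument. Since the excerpt explicitly cites~\cite{Pe99} and~\cite[appendix~F]{Lei04} for this well-known result, I would also indicate that the detailed combinatorial construction may be imported from those references rather than reproduced in full.
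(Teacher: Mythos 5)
Your proposal is correct and follows exactly the free-magma-then-congruence-quotient strategy that the paper itself uses for the involutive analogue (the proof of Proposition~\ref{prop: free-inv-omega-cat}); for Proposition~\ref{prop: free-omega-cat} itself the paper simply defers to \cite{Pe99} and \cite[appendix~F]{Lei04}. Your recursive construction of the free reflexive globular $\omega$-magma, the passage to the quotient by the smallest congruence generated by the axioms, the universal-property argument via the kernel congruence, and the pointwise-universal-arrow derivation of the adjunction all match the paper's method, the only difference being the absence of the self-duality operations.
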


\subsection{Monads in Bicategories and Cartesian Monads}

Algebraic definitions of weak $\omega$-categories in the several approaches available~\cite{Pe99,Ba98,Lei98,Ba02} make use of algebras/modules over certain (generalized) monads. 

\medskip 

In this subsection we review the basic preliminaries on bicategories, introducing monads (respectively  algebras over them) as internal monoids in a bicategory (respectively modules over such monoids). 
For completeness, categorical adjunctions and some of their well-known relations to monads in the bicategory of categories are also recalled, following~\cite{Ri16}. 
Finally Leinster's definition of Cartesian monad is presented. 

\subsubsection{Bicategories} 

Bicategories~\cite{Be67}, \cite[section~I.7.7]{Bo94}, \cite{Lei98}, \cite[section~I.1.5]{Lei04}, are a horizontal categorification of the well-known notion of weak monoidal category (where a monoidal category is just a strict 2-category with one object). There are alternative possible equivalent definitions of this structure, we present here a version that is adapted to our notation for globular $\omega$-quivers in definition~\ref{def: omega-cat}. 

\medskip 

\begin{definition}\label{def: bicat}
A \emph{bicategory} $(\Bf,\circ,\iota,\alpha,\lambda,\rho)$ is a reflexive globular 2-magma  $\Bf^0\leftleftarrows\Bf^1\leftleftarrows\Bf^2$ such that: 
\begin{itemize}
\item[$\blacktriangleright$] 
the 1-magma $\Bf^1\leftleftarrows\Bf^2$ is a strict 1-category with the vertical composition $\circ^2_1$ and vertical identity $\iota^1$; \footnote{
This condition implies that $\Bf^2$, and $\Bf^2\times_{\Bf^0}\Bf^2$ are both bundles of 1-categories over the product $\Bf^0\times\Bf^0$ of discrete categories (with projections that are 1-functors) and that $(\iota^1\circ\iota^0):\Bf^0\to\Bf^2$
is also a functor, where $\Bf^0$ is considered as a discrete category.}
\item[$\blacktriangleright$] 
$(\circ^2_0,\circ^1_0):\Bf^2\times_{\Bf^0}\Bf^2\to\Bf^2$ is a covariant 1-functor; \footnote{
This condition is equivalent to the strict axioms of exchange and functoriality of identities. 
} 
\end{itemize}
that is further equipped with: 
\begin{itemize}
\item[$\blacktriangleright$] 
an \emph{associator natural isomorphism} 
$\left((-\circ^2_0 -)\circ^2_0 -\right)\ \xRightarrow{\alpha}\ \left(-\circ^2_0(-\circ^2_0-)\right)$ between the functors 
\begin{equation*}
(\Bf^2\times_{\Bf^0}\Bf^2)\times_{\Bf^0}\Bf^2\xrightarrow{(-\circ^2_0 -)\circ^2_0 -}\Bf^2, 
\quad 
\Bf^2\times_{\Bf^0}(\Bf^2\times_{\Bf^0}\Bf^2) \xrightarrow{-\circ^2_0(-\circ^2_0-)} \Bf^2, 
\end{equation*}
over the naturally isomorphic 1-categories 
$(\Bf^2\times_{\Bf^0}\Bf^2)\times_{\Bf^0}\Bf^2 \xrightarrow{\alpha}\Bf^2\times_{\Bf^0}(\Bf^2\times_{\Bf^0}\Bf^2)$; 
\item[$\blacktriangleright$]  
a \emph{right unitor natural isomorphism} 
$\left(-\circ^2_0-\right)\ \xRightarrow{\rho}\ \Ig_{\Bf^2}$ between the two 1-functors \footnote{
Here $\Ig_{\Bf}$ denotes the identity functor of the category $\Bf$. 
}  
\begin{equation*}
\Bf^2\times_{\Bf^0}\iota(\Bf^1) \xrightarrow{-\circ^2_0-} \Bf^2, 
\quad \quad 
\Bf^2 \xrightarrow{\Ig_{\Bf^2}}\Bf^2,
\end{equation*}
over the naturally isomorphic 1-categories $\Bf^2\times_{\Bf^0}\iota(\Bf^1)\xrightarrow{\rho}\Bf^2$; 
\footnote{
Here $\iota(\Bf^1)$ is a bundle over $\Bf^0\times\Bf^0$ of discrete categories (consisting only of identity morphisms in $\Bf^2$); it is a distinguished terminal object in the category of bundles over $\Bf^0\times\Bf^0$ of 1-categories with fiberwise 1-functors as morphisms.  
}
\item[$\blacktriangleright$] 
a \emph{left unitor natural isomorphism}
$\left(-\circ^2_0-\right)\ \xRightarrow{\lambda}\ \Ig_{\Bf^2}$
between the two 1-functors 
\begin{equation*}
\iota(\Bf^1)\times_{\Bf^0}\Bf^2 \xrightarrow{-\circ^2_0-} \Bf^2, 
\quad \quad 
\Bf^2 \xrightarrow{\Ig_{\Bf^2}}\Bf^2, 
\end{equation*}
over the naturally isomorphic 1-categories $\iota(\Bf^1)\times_{\Bf^0}\Bf^2\xrightarrow{\lambda}\Bf^2$; 
\end{itemize}
such that any possible diagram involving (iterated) applications of the $\circ^2_0$ composition functor to the associator isomorphism $\alpha$, the left/right unitor isomorphisms $\lambda,\rho$ and their inverses, is commuting. 
\end{definition}
\begin{remark}
Coherence theorems for weak monoidal and bicategories assure that the condition on the commuting diagrams in the previous definition is satisfied as long and the following pentagonal and triangular diagrams commute under $\circ^2_1$-composition: 
\begin{gather*}
\text{\emph{associator coherence}: for all $A\xrightarrow{z}B\xrightarrow{y}C\xrightarrow{x}D\xrightarrow{w}E$ in $\Bf^1$} \\
\xymatrix{
((w\circ^1_0x)\circ^1_0y)\circ^1_0z \ar[rr]^{\alpha_{w,x,y}\ \circ^2_0\ \iota^1(z)} \ar[d]_{\alpha_{(w\circ^1_0x),y,z}} & & \ar[rr]^{\alpha_{w,(x\circ^1_0y),z}} (w\circ^1_0(x\circ^1_0y))\circ^1_0z  & & w\circ^1_0((x\circ^1_0y)\circ^1_0z) \ar[d]^{\iota^1(w)\circ^2_0 \alpha_{x,y,z}}
\\
(w\circ^1_0 x)\circ^1_0(y\circ^1_0z) \ar[rrrr]_{\alpha_{w,x,(y\circ^1_0z)}} & & & & w\circ^1_0(x\circ^1_0(y\circ^1_0z))
}
\\
\text{\emph{unitors coherence}: for all $A\xrightarrow{g}B\xrightarrow{f}C$ in $\Bf^1$} \\ 
\xymatrix{
(f\circ^1_0\iota^0(B))\circ^1_0 g \ar[rrrr]^{\alpha_{f,\ \iota^0(B),\ g}} \ar[drr]_{\rho_f\circ^2_0\iota^1(g)} & & & & f\circ^1_0(\iota^0(B)\circ^1_0 g) \ar[dll]^{\iota^1(f)\circ^2_0\lambda_g} 
\\
& & f\circ^1_0 g & & 
}
\end{gather*}
We refer to the respective entry \cite{n-Lab: coh} for references and details about the proof of this result. 
\xqed{\lrcorner}
\end{remark}

\subsubsection{Monads in a Bicategory}

The notion of \textit{monad} (in a strict 2-category) originated in a concrete adjunction case in~\cite{Go58}, with the name ``standard construction''; it is a powerful instrument that allows to generalize algebraic structures. 

The abstract notion of formal monad over an object of a strict 2-category and in a bicategory are introduced in~\cite{St72} and more recently discussed, for example, in~\cite{Chi15}. 

\begin{definition}\label{def: bicat-monad}
Let $(\Bf,\circ,\iota,\alpha,\lambda,\rho)$ be a bicategory and $\Bs\in\Bf^0$ an object of $\Bf$. 

A \emph{monad $(T,\mu,\eta)$ over $\Bs$} consists of a 1-cell $\Bs\xrightarrow{T}\Bs$ together with a pair of 2-cells $\mu,\eta\in\Bf^2$:  
\begin{itemize}
\item[$\blacktriangleright$]
the \emph{monadic multiplication} $T\circ^1_0 T\xRightarrow{\mu\ } T$,
\item[$\blacktriangleright$]
the \emph{monadic unit} $\iota^0(\Bs)\xRightarrow{\eta\ } T$,  
\end{itemize}
such that the following unitality and associativity diagrams of $\circ^2_1$-compositions of 2-cells are commuting: 
\begin{equation} \label{eq: monadic-ax}
\begin{aligned}
\xymatrix{
\iota^0(\Bs)\circ^1_0 T \ar[d]_{\lambda} \ar[r]^{\ \ \eta\circ^2_0\iota^1(T)} & T\circ^1_0 T \ar[d]_{\mu} & T \circ^1_0 \iota^0(\Bs) \ar[d]^{\rho} \ar[l]_{\iota^1(T)\circ^2_0\eta\ \ }
\\
T \ar[r]_{\iota^1(T)} & T & T \ar[l]^{\iota^1(T)}
}
\quad 
\xymatrix{
(T\circ^1_0 T)\circ^1_0 T \ar[r]^{\alpha} \ar[d]^{\mu\ \circ^2_0\ \iota^1(T)} & T\circ^1_0 (T\circ^1_0 T) \ar[rr]^{\iota^1(T)\ \circ^2_0\ \mu} & & T\circ^1_0 T \ar[d]^\mu
\\
T\circ^1_0 T \ar[rrr]_\mu & & & T
}
\end{aligned}
\end{equation}
Given two objects $\As,\Bs\in\Bf^0$ and two monads, $(T,\mu,\eta)$ over $\As$ and $(S,\mu',\eta')$ over $\Bs$, a \emph{$T$-$S$ bimodule} is a 1-cell $\As\xleftarrow{\Ms}\Bs$ in $\Bf^1$ together with a pair of \emph{left/right evaluations} 2-cells $T\circ^1_0 \Ms \xRightarrow{\theta}\Ms$ and $\Ms \circ^1_0 S\xRightarrow{\vartheta} \Ms$ such that the following diagrams involving vertical composition of 2-cells all commute: 
\begin{gather*}
\xymatrix{
T\circ^1_0 (T \circ^1_0 \Ms)  \ar[d]_{\iota^1(T)\circ^2_0\theta} & \ar[l]_{\alpha_{T,T,\Ms}} (T\circ^1_0 T) \circ^1_0 \Ms \ar[rr]^{\mu\circ^2_0 \iota^1(\Ms)} & & T\circ^1_0 \Ms \ar[d]^{\theta}
\\
T \circ^1_0 \Ms \ar[rrr]_{\theta} & & & \Ms
}
\quad
\xymatrix{
\iota^0(\As)\circ^1_0\Ms \ar[d]_{\lambda_\As} \ar[rr]^{\eta \circ^2_0\iota^1(\Ms)} & & T\circ^1_0 \Ms \ar[d]^{\theta}
\\
\Ms \ar[rr]_{\iota^1(\Ms)} & & \Ms
} 
\\
\xymatrix{
(\Ms \circ^1_0 S)\circ^1_0 S  \ar[d]_{\vartheta \circ^2_0\iota^1(S)}\ar[r]^{\alpha_{\Ms, S, S}} & \Ms \circ^1_0 (S\circ^1_0 S) \ar[rr]^{\iota^1(\Ms)\circ^2_0\mu'} & & \Ms \circ^1_0 S \ar[d]^{\vartheta}
\\
\Ms\circ^1_0 S \ar[rrr]_{\vartheta} & & & \Ms
}
\quad 
\xymatrix{
\Ms\circ^1_0\iota^0(\Bs) \ar[rr]^{\iota^1(\Ms)\circ^2_0\eta'} \ar[d]_{\rho_\Bs}  & & \Ms \circ^1_0 S \ar[d]^{\vartheta}
\\
\Ms \ar[rr]_{\iota^1(\Ms)} & & \Ms
}
\\
\xymatrix{
(T\circ^1_0 \Ms)\circ^1_0 S \ar[rr]^{\alpha_{T,\Ms,S}} \ar[d]_{\theta \circ^2_0 \iota^1(S)} & & 
T\circ^1_0 (\Ms\circ^1_0 S)  \ar[rrr]^{\iota^1(T)\circ^2_0\vartheta} & & &  T \circ^1_0\Ms \ar[d]^{\theta}
\\
\Ms \circ^1_0 S \ar[rrrrr]_{\vartheta} & & & & & \Ms 
}
\end{gather*}
\end{definition}

We will need to apply monads two consecutive times: in the first case it will be a monad $\hat{T}$, over $\Qf$ (respectively $\hat{T}^\star$ over $\Qf^\star$), in the strict bicategory $\Cf$ of natural transformations between covariant functors between (not necessarily small) strict 1-categories; 
in the second case it will be a monad, over a terminal object, in the bicategory $\Ef_{\hat{T}}$ (respectively $\Ef_{\hat{T}^\star}$) that will be subsequently defined in proposition~\ref{prop: T-bicat}. 

\begin{remark}
When applied to the 2-category $\Cf$ of natural transformations between covariant functors between (small) strict 1-categories, the previous definition of monad over an object $\Cs\in\Cf^0$ reproduces the traditional monad endofunctor $T\in [\Cs;\Cs]$ with its multiplication and unit natural transformations. 

Given a small strict 1-category $\Cs\in\Cf^0$, a monad $(T,\mu,\eta)$ over $\Cs$ is an endofunctor $\Cs\xrightarrow{T}\Cs$, $T\in\Cf^1$ equipped with natural transformations $T\circ T\xRightarrow{\mu^T}T$, the monad multiplication $\mu^T\in\Cf^2$ and $\iota^1(\Cs)\xRightarrow{\eta^T} T$, the monad unit $\eta^T\in\Cf^2$, that satisfy, for every object $X\in\Cs^0$, the following properties:
\begin{equation}\label{eq: monad-asso-u}
\mu^T_X\circ T^1(\eta^T_X)=\iota^0_{T^0(X)}=\mu^T_X\circ \eta^T_{T^0(X)}, \quad \quad 
\mu^T_X\circ T^1(\mu^T_X)=\mu^T_X\circ \mu^T_{T^0(X)}.
\end{equation} 
An \emph{algebra for the monad} $(T,\mu^T,\eta^T)$ over the 1-category $(\Cs,\circ,1)$ consists of an object $A\in \Cs^0$ together with an evaluation morphism $T(A) \xrightarrow{\theta^A} A$, $\theta^A\in\Cs^1$,  such that $\theta^A \circ \eta^A =\iota^0_A$ and $\theta^A \circ T^1(\theta^A) = \theta^A \circ \mu^A$. 

Considering a functor $\bullet\xrightarrow{A}\Cs$ from a terminal 1-category $\bullet\in\Cf^0$, the definition of bimodule left for the monad $T$ over $\Cs$ and right for the identity endofunctor of $\bullet$ reproduces the usual definition of $T$-algebra. 
\xqed{\lrcorner}
\end{remark}

We recall these essential properties of monads and adjunctions, see for example~\cite[chapter 5]{Ri16}. 
\begin{remark}\label{remark: monadic} 
Every adjunction $\xymatrix{\Cf \rtwocell^{\Ug}_{\Fg}{'} & \Qf}$, $\Fg\dashv \Ug$, between small 1-categories $\Qf,\Cf$, with unit $\id_\Qf\xRightarrow{\eta}\Ug\circ\Fg$ and co-unit $\Fg\circ\Ug\xRightarrow{\epsilon}\id_{\Cf}$, induces a monad $T:=\Ug\circ\Fg$, in the strict 2-category of natural transformations between functors, over the 1-category $\Qf$, with multiplication 
$\mu:=\Ug \circ \epsilon \circ \Fg$ and unit $\eta$ (see for example~\cite[lemma 5.1.3]{Ri16} for further details). 

\medskip 

Every monad $(T,\mu,\eta)$ on a category $\Cs$, induces an adjunction $\xymatrix{\Cs^T \rtwocell^{\Ug^T}_{\Fg^T}{'} & \Cs}$ where $\Fg^T$ is the free $T$-algebra functor, left-adjoint to the forgetful functor $\Ug^T$ defined on the category $\Cs^T$ of $T$-algebras.   
The monad $T$ coincides with the monad induced by the above adjunction $\Fg^T\dashv \Ug^T$ (see for example~\cite[lemma 5.2.8]{Ri16}). 

\medskip 

Given a monad $(T,\mu,\eta)$ over $\Cs$, the adjunction $\xymatrix{\Cs^T \rtwocell^{\Ug^T}_{\Fg^T}{'} & \Cs}$ is a terminal object (see \cite[proposition~5.2.12]{Ri16}) in the category whose objects are adjunctions $\xymatrix{\Ds \rtwocell^{\Ug}_{\Fg}{'} & \Cs}$, $\Fg\dashv\Ug$, over $\Cs$ and whose morphisms 
$\xymatrix{\Cs \rtwocell^{\Fg}_{\Ug}{'} & \Ds} \xrightarrow{\Hg} \xymatrix{\hat{\Ds} \rtwocell^{\hat{\Ug}}_{\hat{\Fg}}{'} & \Cs}$
are functors $\Ds\xrightarrow{\Hg}\hat{\Ds}$ such that $\hat{\Fg}=\Hg\circ\Fg$ and $\hat{\Ug}\circ\Hg=\Ug$. 

A \emph{monadic adjunction} $\Fg\dashv \Ug$ is an adjunction $\xymatrix{\Ds \rtwocell^{\Ug}_{\Fg}{'} & \Cs}$ such that the terminal morphism $\Ds\xrightarrow{!}\Cs^T$, in the previous category of adjunctions over $\Cs$, is an equivalence of categories (see \cite[definition~5.3.1]{Ri16}). 
A \emph{monadic functor} is a functor $\Ds\xrightarrow{\Ug}\Cs$ with a left adjoint $\Ds\xleftarrow{\Fg}\Cs$ such that the adjunction $\Fg\dashv\Ug$ is monadic.
\xqed{\lrcorner}
\end{remark}

\medskip 

From the adjunctions described in the previous proposition~\ref{prop: free-omega-cat} we have the following. 
\begin{corollary}\label{cor: free-monads}
On the 1-category $\Qf$ of morphisms of globular $\omega$-sets, we have the following  
\begin{itemize}
\item[$\blacktriangleright$]
\emph{free strict globular $\omega$-category monad} $\hat{T}:=\Ug\circ\Fg$.  
\end{itemize}
\end{corollary}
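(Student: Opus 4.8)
The plan is to obtain this as an immediate instance of the standard ``adjunction induces a monad'' construction recalled in Remark~\ref{remark: monadic}, applied to the adjunction furnished by Proposition~\ref{prop: free-omega-cat}. First I would invoke Proposition~\ref{prop: free-omega-cat} to fix the free/forgetful adjunction $\Fg\dashv\Ug$ between the 1-category $\Qf$ of covariant morphisms of globular $\omega$-sets and the 1-category $\Cf$ of covariant $\omega$-functors between strict globular $\omega$-categories, together with its unit $\id_\Qf\xRightarrow{\eta}\Ug\circ\Fg$ and counit $\Fg\circ\Ug\xRightarrow{\epsilon}\id_\Cf$, which satisfy the two triangle identities. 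This is all data that the earlier excerpt already provides.

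Next I would set $\hat{T}:=\Ug\circ\Fg:\Qf\to\Qf$, with monadic multiplication $\mu:=\Ug\circ\epsilon\circ\Fg:\hat{T}\circ\hat{T}\Rightarrow\hat{T}$ (the horizontal composite whiskering the counit $\epsilon$ by $\Fg$ on the right and $\Ug$ on the left) and monadic unit the adjunction unit $\eta:\id_\Qf\Rightarrow\hat{T}$. The associativity square for $\mu$ and the two unitality triangles in~\eqref{eq: monad-asso-u} then follow by the familiar purely diagrammatic argument from the triangle identities and the interchange law for whiskering of natural transformations; this is exactly the content of the first paragraph of Remark~\ref{remark: monadic} and of~\cite[lemma~5.1.3]{Ri16}, which I would simply cite rather than re-derive.

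The only point requiring a word of care is size: $\Qf$ and $\Cf$ are not small, so strictly speaking the cited lemma is stated one size level below. I would dispose of this by observing that the construction and the verification of the monad axioms are entirely formal — they take place in the strict bicategory $\Cf$ of natural transformations between covariant functors of (not necessarily small) strict 1-categories, as already announced in the preliminaries — and involve no quantification over a proper class as an element of a class, so the foundational disclaimer of Section~\ref{sec: pre} (or, if one prefers, the three-tiered class hierarchy of the footnote there) suffices. I do not expect any genuine obstacle here: the statement is a bookkeeping corollary of Proposition~\ref{prop: free-omega-cat} plus Remark~\ref{remark: monadic}, and the ``hard part'', such as it is, is merely confirming that the whiskered counit $\Ug\circ\epsilon\circ\Fg$ is the multiplication one wants and that nothing in the size setup obstructs the formal argument.
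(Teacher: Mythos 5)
Your proposal is correct and follows exactly the route the paper intends: the corollary is stated immediately after Remark~\ref{remark: monadic} precisely so that it follows by applying the standard adjunction-induces-monad construction ($\hat{T}:=\Ug\circ\Fg$, $\mu:=\Ug\circ\epsilon\circ\Fg$, unit $\eta$) to the adjunction of Proposition~\ref{prop: free-omega-cat}, with the size issue deferred to the foundational disclaimer of Section~\ref{sec: pre}. Nothing further is needed.
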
 
Certain conditions of ``closure under pull-backs'' are required to define Leinster's generalized operads. 

\begin{definition}\cite[definition~II.4.4.1]{Lei04}
A category $\Ef$ is \emph{Cartesian} if it admits all pull-backs: for any co-span $A\xrightarrow{\alpha}X\xleftarrow{\beta}B$ in $\Ef$, there exists a span $A\xleftarrow{\hat{\alpha}}\hat{X}\xrightarrow{\hat{\beta}}B$ in $\Ef$, with $\alpha\circ\hat{\alpha}=\beta\circ\hat{\beta}$, satisfying the following universal factorization property: 
for any other span $A\xleftarrow{\alpha'}P\xrightarrow{\beta'}B$ in $\Ef$, with $\alpha\circ\alpha'=\beta\circ\beta'$, there exists a unique $P\xrightarrow{\gamma} \hat{X}$ such that $\alpha'=\hat{\alpha}\circ\gamma$ and $\beta'=\hat{\beta}\circ\gamma$. 

\medskip 

A functor $\Ef\xrightarrow{\Gg}\hat{\Ef}$ between 1-categories is a \emph{Cartesian functor} if it preserves pull-backs. 

\medskip 

A natural transformation $\xymatrix{\Es \rtwocell^{\Gg}_{\hat{\Gg}}{\zeta} & \hat{\Es}}$ is a \emph{Cartesian natural transformation} if, for any 1-arrow $A\xrightarrow{x}B$ in $\Es$, the 
span $\Gg(B)\xleftarrow{\Gg(x)}\Gg(A)\xrightarrow{\zeta_A}\hat{\Gg}(A)$ in $\hat{\Es}$ is a pull-back of the co-span $\Gg(B)\xrightarrow{\zeta_B}\hat{\Gg}(B)\xleftarrow{\hat{\Gg}(x)}\hat{\Gg}(A)$ in $\hat{\Es}$. 

\medskip 

A monad $(T,\mu,\eta)$ on a category $\Cs$ is a \emph{Cartesian monad} if the category $\Cs$ the functor $T$ and the two natural transformations $\mu$ and $\eta$ are all Cartesian in the previously defined senses. 
\end{definition}

For the free strict globular $\omega$-category adjunction in proposition~\ref{prop: free-omega-cat} and its associated monad in corollary~\ref{cor: free-monads} we have these Cartesianity conditions as a consequence of \cite[theorem F.2.2]{Lei04}.  
\begin{proposition}  \label{prop: Q-cart}
The 1-category $\Qf$ of small globular $\omega$-sets with morphisms of globular $\omega$-sets 
is Cartesian. 
The 1-category $\Cf$ of small strict globular $\omega$-categories with $\omega$-functors is Cartesian. 
The forgetful 1-functor $\Cf\xrightarrow{\Ug}\Qf$ and the free strict globular $\omega$-category 1-functor $\Cf\xleftarrow{\Fg}\Qf$ are Cartesian. 
The unit $\Ig_{\Qf}\xrightarrow{\eta} \Ug\circ\Fg$ of the free strict globular $\omega$-category functor is a Cartesian natural transformation. 
The free strict globular $\omega$-category monad $\hat{T}:=\Ug\circ\Fg$ is Cartesian. 
\end{proposition}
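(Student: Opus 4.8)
The plan is to derive each of the listed Cartesianity assertions in the order given, reducing everything to two ingredients: standard facts about presheaf categories and about categories of algebras, and the explicit free construction of proposition~\ref{prop: free-omega-cat}, i.e.\ ultimately \cite[Appendix~F]{Lei04}.

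First I would note that $\Qf$ is (equivalent to) the presheaf category $[\GG^{\mathrm{op}},\mathbf{Set}]$, where $\GG$ is the globular category with one object $[n]$ for each $n\in\NN$ and two generating arrows $[n]\to[n+1]$ subject to the co-globularity relations dual to the globularity conditions of definition~\ref{def: omega-cat}. Presheaf categories are complete, with limits computed pointwise; hence $\Qf$ admits all pull-backs and is Cartesian, and — crucially for what follows — a commuting square in $\Qf$ is a pull-back if and only if each of its components (a square in $\mathbf{Set}$, one for every $k\in\NN$) is a pull-back. This reduces every subsequent pull-back check to an elementary statement about sets.

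Next, for $\Cf$ I would invoke proposition~\ref{prop: free-omega-cat} and corollary~\ref{cor: free-monads} to identify $\Cf$ with the Eilenberg--Moore category $\Qf^{\hat{T}}$ of algebras for the monad $\hat{T}=\Ug\circ\Fg$. The forgetful functor $\Ug\colon\Qf^{\hat{T}}\to\Qf$ creates all limits that exist in $\Qf$ (a standard property of categories of algebras over a monad); since $\Qf$ is complete, $\Cf$ then has all pull-backs and is Cartesian, and $\Ug$ preserves them, hence is a Cartesian functor and, creating limits, also reflects pull-backs. The genuine content of the proposition is the remaining claim: that the \emph{functor} $\hat{T}$ preserves pull-backs and that the natural transformations $\eta$ and $\mu=\Ug\circ\epsilon\circ\Fg$ are Cartesian — this is precisely \cite[Theorem~F.2.2]{Lei04}, and here I would simply cite it. The structural reason behind it is the explicit description of proposition~\ref{prop: free-omega-cat}: one has $\hat{T}(Q)^n$ identified with the set of pairs $(\pi,\ell)$ where $\pi$ ranges over the labelling-independent set $\hat{T}(\bullet)^n$ of $n$-dimensional globular pasting diagrams and $\ell$ is an admissible labelling of the cells of $\pi$ by cells of $Q$; because the shapes $\pi$ do not depend on $Q$ and a labelling is determined ``freely'' by what it does on the cells of $\pi$, applying $\hat{T}$ to any pull-back square of globular $\omega$-sets yields, componentwise, a pull-back square of sets, and the naturality squares of $\eta$ and of $\mu$ reduce likewise to elementary bijections between sets of labelled pasting diagrams. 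Finally, Cartesianness of $\Fg$ follows formally: $\Ug$ reflects pull-backs and $\Ug\circ\Fg=\hat{T}$ preserves them, so $\Fg$ preserves them.

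The hard part is, of course, this central step — that $\hat{T}$, $\eta$ and $\mu$ are Cartesian — which is not formal: it depends on the combinatorics of globular pasting schemes and on the particular free construction underlying proposition~\ref{prop: free-omega-cat}, and is carried out in full in \cite[Appendix~F]{Lei04}. Everything else (that $\Qf$ and $\Cf$ are Cartesian and that $\Ug$ is a Cartesian functor) is routine once $\Cf$ is presented as a category of algebras for $\hat{T}$.
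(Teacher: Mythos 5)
Your proposal is correct and ultimately rests on the same key ingredient as the paper, which offers no proof of this proposition beyond citing \cite[theorem~F.2.2]{Lei04} for all the Cartesianity claims at once. Your additional scaffolding (pointwise pull-backs in the presheaf category $\Qf$, monadicity of $\Ug$ to handle $\Cf$ and $\Ug$, and the formal deduction of Cartesianity of $\Fg$ from that of $\hat{T}$ and reflection of pull-backs by $\Ug$) is sound and correctly isolates the genuinely combinatorial content in the cited theorem.
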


\subsection{Leinster Weak Globular $\omega$-categories} 

Weak $\omega$-categories in the Leinster's approach~\cite[section~III.9.2]{Lei04} are 
\textit{algebras for a certain monad $L$}; the monad $L$ here utilized is a certain 
\textit{universal (initial) generalized contracted $T$-operad}, where $T$ is the Cartesian free strict globular $\omega$-category monad $\hat{T}$ of corollary~\ref{cor: free-monads}. 

\medskip 

We proceed here to recall all the essential above-mentioned ingredients that are still missing. 

\subsubsection{Generalized Multicategories and Generalized Operads} \label{sec: multicategories}

A monad $T$ on a Cartesian category $\Ef$ allows to formulate a generalized notion of multi-category, where the ``arity'' of the multicategory arrows is specified by $T(\bullet)$, with $\bullet$ an object in $\Ef$. The strategy behind such definition originates in~\cite{Bu71}, \cite{He97} and has been further described in~\cite[section~II.4]{Lei04} whose exposition we are closely following. 
 
\begin{proposition}\cite[section~II.4.2]{Lei04}. \label{prop: T-bicat}
Let $T$ be a Cartesian monad on the Cartesian category $\Ef$. 

Given a span $E_1 \xrightarrow{i_1} X \xleftarrow{i_2}E_2$ in $\Ef$, let us denote by 
$E_1\xleftarrow{\pi_1}E_1\diamond_X E_2\xrightarrow{\pi_2}E_2$ a choice of pull-back. \footnote{
Assume that a specific choice of pull-backs has been done via axiom of choice, for example we will use $E_1\diamond_X E_2:=E_1\times_X E_2$.  
} 

\medskip 

There is a bicategory $(\Ef_T,\circ_T,\iota_T,\alpha_T,\lambda_T,\rho_T)$ defined as follows: 
\begin{itemize}
\item[$\blacktriangleright$] 
$0$-cells in $\Ef_T^0$ are just objects $E$ in $\Ef^0$, 
\item[$\blacktriangleright$] 
$1$-cells $E_2\xleftarrow{(t_P,P,s_P)}E_1$ in $\Ef_T^1$ are spans in $\Ef$ of the form:  $E_2\xleftarrow{t_P}P\xrightarrow{s_P}T^0(E_1)$

\item[$\blacktriangleright$] 
$2$-cells $\xymatrix{E_2 & \ltwocell_{(t_P,P,s_P)}^{(t_Q,Q,s_Q)}{^\phi} E_1}$ consist of commuting diagrams in $\Ef$ of the form: 
$\vcenter{\xymatrix{
&  P \ar[dd]_{\phi} \ar[dr]^{s_P} \ar[ld]_{t_P}  & 
\\
 E_2 &  & T^0(E_1)
\\
 & Q \ar[ur]_{s_Q} \ar[lu]^{t_Q} &  
}}
$ 
\item[$\blacktriangleright$] 
vertical composition $\circ^2_1$ is just the usual composition of 1-arrows in $\Ef$, 
\item[$\blacktriangleright$] 
vertical identities $\iota^1_{\Ef_T}\left(E_2\xleftarrow{t_P}P\xrightarrow{s_P}T(E_1)\right)$ are just identities $\iota^0_\Ef(P)$ in $\Ef$, 
\item[$\blacktriangleright$] 
horizontal composition $\left(E_1\xleftarrow{t_{P_1}}P_1\xrightarrow{s_{P_1}}T(E_2)\right)\circ^1_0\left(E_2\xleftarrow{t_{P_2}}P_2\xrightarrow{s_{P_2}}T(E_3)\right)$ of $1$-arrows in $\Ef_T$ is given by: 
\begin{gather*}
\left( E_1 \xleftarrow{t_{P_1}\circ\pi_1} P_1\diamond_{T(E_2)} T(P_2)
\xrightarrow{\mu^T_{E_3}\circ T(s_{P_2})\circ\pi_2} T(E_3) \right), 
\ \text{as specified by the following diagram}  
\\
\xymatrix{
& & \ar[dl]_{\pi_1} P_1\diamond_{T(E_2)} T(P_2) \ar[dr]^{\pi_2} & & &
\\
E_1  &\ar[r]^{s_{P_1}} P_1 \ar[l]_{t_{P_1}} & T(E_2) &\ar[rr]^{T(s_{P_2})} T(P_2) \ar[l]_{T(t_{P_2})} & & \ar[r]^{\mu^T_{E_3}} T(T(E_3))& T(E_3)
}
\end{gather*}
horizontal composition of 2-cells 
$\xymatrix{E_1 & E_2 \ltwocell_{(t_{P_1},P_1,s_{P_1})}^{(t_{Q_1},Q_1,s_{Q_1})}{^{\phi_1}} & E_3 \ltwocell_{(t_{P_2},P_2,s_{P_2})}^{(t_{Q_2},Q_2,s_{Q_2})}{^{\phi_2}} }$ 
is given by the 2-cell 
$\xymatrix{E_1 & &  \lltwocell_{(t_{P_1},P_1,s_{P_1})\circ^1_0(t_{P_2},P_2,s_{P_2})}^{(t_{Q_1},Q_1,s_{Q_1})\circ^1_0(t_{Q_2},Q_2,s_{Q_2})}{^{\quad \quad \phi_1\circ^2_0\phi_2}} E_3}$, where $\phi_1\circ^2_0\phi_2$ is defined as the unique morphism $P_1\diamond_{T(E_2)}T(P_2)\xrightarrow{\phi_1\circ^2_0\phi_2} Q_1\diamond_{T(E_2)}T(Q_2)$ in $\Ef$ induced by the universal factorization property of the pull-back $Q_1 \xleftarrow{\pi'_1} Q_1\diamond_{T(E_2)}T(Q_2)\xrightarrow{\pi'_2} T(Q_2)$ via the span $Q_1\xleftarrow{\phi_1\circ\pi_1}P_1\diamond_{T(E_2)}T(P_2)\xrightarrow{T(\phi_2)\circ \pi_2}T(Q_2)$ as specified in this commuting diagram: 
\begin{equation*}
\xymatrix{
& P_1 \ar[rr]^{\phi_1} \ar[dr]_{s_{P_1}} & & Q_1 \ar[dl]^{s_{Q_1}} &  
\\
P_1\diamond_{T(E_2)} T(P_2) \ar[ur]^{\pi_1} \ar[dr]_{\pi_2}  & & T(E_2) & & Q_1\diamond_{T(E_2)} T(Q_2), \ar[ul]_{\pi'_1} \ar[dl]^{\pi'_2}
\\
& T(P_2) \ar[ru]^{T(t_{P_2})} \ar[rr]^{T(\phi_2)} & & T(Q_2) \ar[lu]_{T(t_{Q_2})}  & 
}
\end{equation*}
\item[$\blacktriangleright$] 
horizontal identities $\iota_T^0(E):=\left(E \xleftarrow{\iota^0(E)} E\xrightarrow{\eta^T_E} T(E)\right)$, for all objects $E\in\Ef_T^0$.  
\item[$\blacktriangleright$]
left-unitors $\xymatrix{E_2 & \ltwocell_{\iota^1(E_2)\circ^1_0P}^{P}{^\ \ \lambda_P} E_1}$ are uniquely determined by universal factorization property of the pull-backs:
\begin{equation} \label{eq: bicat-left-unitors}
\xymatrix{
E_2 \ar[r]^{\eta^T_{E_2}} & T(E_2) & \ar[l]_{T(t_P)} T(P) 
\\
& \ar[ul]_{t_P} P\ar[ur]^{\eta^T_P} & 
\\
& \quad \quad E_2\diamond_{T(E_2)}T(P) \ar@{.>}[u]_{\lambda_P} \ar[uur]_{\pi_2} \ar[uul]^{\pi_1}& 
}
\quad 
\xymatrix{
P \ar[rr]^{t_P} \ar[d]_{\eta^T_P} & & E_2 \ar[d]^{\eta^T_{E_2}}
\\
T(P) \ar[rr]_{T(t_P)} & & T(E_2)
}
\end{equation}
(where the second pull-back-diagram above is assured by Cartesianity of $\eta^T$) and by the diagram
\begin{equation*}
\xymatrix{
T(P) \ar[rr]^{T(s_P)} & & T^2(E_1) \ar[rr]^{\mu^T_{E_1}} & & T(E_1)
\\
P \ar[rr]^{s_P} \ar[u]^{\eta^T_P} & & T(E_1) \ar[u]^{\eta^T_{T(E_1)}} \ar[rru]_{\iota^1(T(E_1))} & & 
}
\end{equation*}
that commutes by Cartesianity of $\eta^T$ and the unital property of the monad $T$; 
\item[$\blacktriangleright$]
right-unitors $\xymatrix{E_2 & \ltwocell_{P\circ^1_0\iota^1(E_1)}^{P}{^\ \ \rho_P} E_1}$ are also determined by universal factorization property of the pull-backs:
\begin{equation} \label{eq: bicat-right-unitors}
\xymatrix{
P \ar[r]^{s_P} & T(E_1) & \ar[l]_{\iota^1(T(E_1))} T(E_1) 
\\
& \ar[ul]_{\iota^1(P)} P\ar[ur]^{s_P} & 
\\
& \quad \quad P\diamond_{T(E_1)}T(E_1) \ar@{.>}[u]_{\rho_P} \ar[uur]_{\pi'_2} \ar[uul]^{\pi'_1}& 
}
\quad 
\xymatrix{
P \ar[rr]^{s_P} \ar[d]_{\iota^1(P)} & & T(E_1) \ar[d]^{\iota^1(T(E_1))}
\\
P \ar[rr]_{T(s_P)} & & T(E_1)
}
\end{equation}
(where the second pull-back diagram is due to Cartesianity of $T$) and by the commuting diagram
\begin{equation*}
\xymatrix{
P \ar[rr]^{s_P} \ar[rrd]_{s_P} & & \ar[d]|{\iota^1(T(E_1))} T(E_1) \ar[rr]^{T(\eta^T_{E_1})} & & T^2(E_1) \ar[lld]^{\mu^T_{E_1}}
\\
& & T(E_1) & & 
}
\end{equation*}
that is again commuting because of the unital property of the monad $T$; 
\item[$\blacktriangleright$] 
associators $\xymatrix{E_4 & & \lltwocell_{(P_1\circ^1_0P_2)\circ^1_0 P_3}^{P_1\circ^1_0(P_2\circ^1_0 P_3)}{^\quad\quad  \alpha_{P_1P_2P_3}} E_1}$ for the compositions $E_4 \xleftarrow{P_1}E_3\xleftarrow{P_2}E_2\xleftarrow{P_3}E_1$ are uniquely determined by universal factorization property of the following pair of pull-backs over $T(E_2)$: \footnote{
Here, with a little abuse of notation, we utilize the same symbols $\pi_1,\pi_2$ to denote all the  pull-back projections of compositions. 
} 
\begin{gather}\label{eq: bicat-associators-unitors}
\vcenter{\xymatrix{
\ar[rr]^{\mu^T_{E_2}\circ T(s_{P_2})\circ\pi_2} P_1\diamond T(P_2)  & & T(E_2) & & T(P_3)\ar[ll]_{T(t_{P_3})}
\\
& & P_1\diamond T(T(P_2)\diamond T(P_3)) \ar[urr]|{\mu^T_{P_3}\circ T(\pi_1)\circ\pi_1} \ar[ull]|{\pi_2\circ\theta} & & 
\\
& & \ar@{.>}[u]|{\alpha_{P_1P_2P_3}} (P_1\diamond T(P_2))\diamond T(P_3) \ar[uurr]_{\pi_2} \ar[uull]^{\pi_1}& &
}}
\end{gather}
where the morphism $\theta$ is uniquely determined by universal factorization property in the following diagram, where all the squares (due to composition or to the Cartesianity of $T$ and $\mu^T$) are pull-backs: 
\begin{gather*}
\xymatrix{
P_1\diamond T(P_2\diamond T(P_3)) \ar@/_1cm/[dd]_{\pi_1} \ar@{.>}[d]_{\theta} \ar[rr]^{\pi_2} & & \ar[d]_{T(\pi_1)} T(P_2\diamond T(P_3)) \ar[rr]^{T(\pi_2)}  & & \ar[d]_{T^2(t_{P_3})} T^2(P_3)\ar[rr]^{\mu^T_{P_3}} & & T(P_3) \ar[d]_{T(t_{P_3})}
\\
P_1\diamond T(P_2) \ar[d]_{\pi_1} \ar[rr]_{\pi_2} & & T(P_2) \ar[rr]_{T(s_{P_2})} \ar[d]_{T(t_{P_2})} & & T^2(E_2)\ar[rr]_{\mu^T_{E_2}} & & T(E_2) 
\\ 
P_1 \ar[rr]^{s_{P_1}}& & T(E_3) & & & & 
}
\end{gather*}
and where the following triangles diagrams, with target $E_4$ and source $T(E_1)$ 
\begin{equation}\label{eq: op-asso-T*}
\vcenter{
\xymatrix{
E_4 & & & T^2(P_3) \ar[d]_{\mu^T_{P_3}} \ar[r]^{T^2(s_{P_3})} & T^3(E_1) \ar[r]^{T(\mu^T_{E_1})} \ar[d]_{\mu^T_{T(E_1)}} & T^2(E_1) \ar[d]^{\mu^T_{E_1}} & 
\\
& & \ar[llu]|{t_{P_1}\circ\pi_1} P_1\diamond T(T(P_2)\diamond T(P_3))\ar[ur]_{T(\pi_2)\circ\pi_2}
& T(P_3) \ar[r]_{T(s_{P_3})} & T^2(E_1) \ar[r]^{\mu^T_{E_3}} & T(E_1) 
\\
& & \ar[uull]^{t_{P_1}\circ\pi_1\circ\pi_1} (P_1\diamond T(P_2))\diamond T(P_3) \ar[u]|{\alpha_{P_1P_2P_3}} \ar[ur]_{\pi_2}
}}
\end{equation}
are commuting because of Cartesianity of $T$ and $\mu^T$ and by the associativity property of the monad $T$. 
\end{itemize}
\end{proposition}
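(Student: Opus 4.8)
The plan is to verify, clause by clause, the requirements of Definition~\ref{def: bicat} for the data $(\Ef_T,\circ_T,\iota_T,\alpha_T,\lambda_T,\rho_T)$ written out in the statement, relying throughout on the single principle that a morphism of $\Ef$ into a pullback — and, iterating, into any finite limit assembled from pullbacks — is uniquely determined by its composites with the limiting projections. Here the Cartesianity of $\Ef$ is exactly what makes every pullback occurring in the statement exist; the Cartesianity of $T$ and of $\mu^T$ is used precisely at the points where $T$ is applied to a pullback square (so that, e.g., $T$ of $P_1\diamond_{T(E_2)}T(P_2)$ is again a pullback) or where a naturality square of $\mu^T$ must itself be a pullback; and the Cartesianity of $\eta^T$ enters only in the construction of the unitors. (One could alternatively phrase the argument as: Cartesianity lifts $(T,\mu^T,\eta^T)$ to a monad on the bicategory of spans in $\Ef$, of which $\Ef_T$ is the Kleisli bicategory; but building that Kleisli bicategory is itself the verification below, so I take the direct route.)

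First I would dispose of the strict part. That $\Ef_T^0\leftleftarrows\Ef_T^1\leftleftarrows\Ef_T^2$ is a reflexive globular $2$-magma (globularity, the $2$-magma compositions being defined, reflexivity via $\iota^0,\iota^1$) is immediate, source and target of a $2$-cell being parallel $1$-cells. For fixed objects $E_2,E_1$, a $2$-cell between $1$-cells $E_2\leftarrow E_1$ is just a morphism of $\Ef$ commuting with the two span legs, with $\circ^2_1$-composition and identity inherited from $\Ef$; associativity and unitality are therefore inherited, so $\Ef_T^1\leftleftarrows\Ef_T^2$ is a strict $1$-category (a bundle of hom-categories over $\Ef_T^0\times\Ef_T^0$). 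Next I would show $(\circ^2_0,\circ^1_0)$ is a covariant $1$-functor $\Ef_T^2\times_{\Ef_T^0}\Ef_T^2\to\Ef_T^2$: on $1$-cells it is span composition through $P_1\diamond_{T(E_2)}T(P_2)$ with leg $\mu^T_{E_3}\circ T(s_{P_2})\circ\pi_2$, visibly again a $1$-cell; on $2$-cells it is the induced map $\phi_1\circ^2_0\phi_2$ between pullbacks, which exists and is unique by the universal property recalled in the statement, and which is compatible with the span legs (hence a genuine $2$-cell) by reading off the commuting diagram displayed there. Strict interchange, $(\psi_1\circ^2_0\psi_2)\circ^2_1(\phi_1\circ^2_0\phi_2)=(\psi_1\circ^2_1\phi_1)\circ^2_0(\psi_2\circ^2_1\phi_2)$, and preservation of vertical identities then hold because the two candidate morphisms into $Q_1\diamond_{T(E_2)}T(Q_2)$ agree after post-composition with both projections $\pi_1',\pi_2'$, so they coincide.

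Then I would handle the coherence data. The unitors $\lambda_P,\rho_P$ are the comparison maps into $E_2\diamond_{T(E_2)}T(P)$ and $P\diamond_{T(E_1)}T(E_1)$ exhibited in~\eqref{eq: bicat-left-unitors} and~\eqref{eq: bicat-right-unitors}; they are isomorphisms because the displayed $\eta^T$-naturality square is a pullback (Cartesianity of $\eta^T$) and the displayed square involving $\mu^T$ commutes (a unit law of $T$), which together let one produce the inverse from the universal property, and their naturality in $P$ is another uniqueness-of-factorization argument. The associator $\alpha_{P_1P_2P_3}$ is the comparison of the two iterated pullbacks over $T(E_2)$ in~\eqref{eq: bicat-associators-unitors}; its construction first requires the auxiliary map $\theta$, obtained by pasting pullback squares (here Cartesianity of $T$ and $\mu^T$ is invoked to know the pasted squares are pullbacks), after which one checks that the triangles~\eqref{eq: op-asso-T*} commute by the monad laws, and then invertibility and naturality of $\alpha$ are once more forced by uniqueness into the relevant pullbacks. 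Finally, for the pentagon and triangle axioms of Definition~\ref{def: bicat}, I would write each side as a morphism into the appropriate iterated pullback and verify that the two sides have equal composites with every projection; these reductions bottom out in the associativity and unit laws of $T$ together with the Cartesian (pullback) naturality squares of $\mu^T$ and $\eta^T$, so the two sides agree, and by the coherence theorem quoted just after Definition~\ref{def: bicat} this suffices.

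I expect the only real obstacle to be organizational: bookkeeping the nested pullbacks and tracking, for each identity to be proved, the exact tuple of projections along which the two morphisms are to be compared — the fixed choice $E_1\diamond_X E_2:=E_1\times_X E_2$ in the statement keeps this concrete. Conceptually nothing is delicate, since every equation is imposed by a universal property, but the diagram chases for naturality of $\alpha$ and for the pentagon are lengthy; the one genuinely load-bearing hypothesis is the Cartesianity of $T$, $\mu^T$ and $\eta^T$, invoked each time a limit is transported through $T$ or a monad naturality square is required to be a pullback, and without which the unitors and associators would not even be well defined.
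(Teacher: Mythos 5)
Your proposal is correct and follows exactly the route the paper (and its source, Leinster's section~II.4.2) takes: the paper gives no separate proof but embeds the construction in the statement, with the same division of labour you describe --- Cartesianity of $\Ef$ for the existence of the horizontal composites, Cartesianity of $\eta^T$, $T$ and $\mu^T$ together with the monad laws to make the unitor and associator comparison maps invertible and span-compatible, and uniqueness of factorizations through (iterated) pullbacks for functoriality of $\circ^2_0$, naturality, and the pentagon/triangle coherences. Nothing in your outline deviates from or falls short of that argument.
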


\begin{definition}
Given a Cartesian monad $(T,\mu^T,\eta^T)$ on a Cartesian category $\Ef$, a \emph{$T$-multicategory on $E$} is a monad $(P,\mu_P,\eta_P)$ over an object $E$ in the bicategory $\Ef_T$ defined in proposition~\ref{prop: T-bicat}. 

\medskip 

A \emph{$T$-operad} $(P,\mu_P,\eta_P)$ is a $T$-multicategory on a terminal object $\bullet$ in $\Ef$. 
\end{definition}

\begin{remark}
In practice a $T$-multicategory consists of: an object $E\in\Ef^0$ (indexing the inputs and outputs of multicategory arrows); a 1-cell $E\xleftarrow{t_P}P\xrightarrow{s_P} T^0(E)$ in $\Ef_T$ (specifying all the multicategory arrows with multi-input as an element of $T^0(E)$ and only one target in $E$); a composition of multicategory arrows specified by the monadic multiplication $\mu_P$; and multicategory identity, specified by the monadic unit $\eta_P$. 
\xqed{\lrcorner}
\end{remark}

\begin{proposition}\label{prop: operad-cat}
For any object $E\in\Ef$, we have  a 1-category, denoted by $\Ef^T_E$, whose objects are 1-arrows in $\Ef_T$ with source and target $E$ and whose morphisms are 2-arrows in $\Ef_T$. 

\medskip 

For every object $E\in\Ef$, there is a \emph{category $\Cf^T_E$ of $T$-multicategories on $E$} that is the subcategory of $\Ef^T_E$ with: 
\begin{itemize}
\item[$\blacktriangleright$] 
objects of $\Cf^T_E$ are $T$-multicategories $(P,\mu_P,\eta_P)$ in $\Ef_T$,
\item[$\blacktriangleright$]
morphisms $(P,\mu_P,\eta_P)\xrightarrow{\phi}(Q,\mu_{Q},\eta_{Q})$ in $\Cf^T_E$ are morphisms in $\Ef_T$ such that: 
\begin{equation*}
\xymatrix{
P\circ^1_0 P \ar[rr]^{\mu_P} \ar[d]_{\phi\circ^2_0\phi} & & P \ar[d]^{\phi}
\\
Q\circ^1_0Q \ar[rr]^{\mu_{Q}} & & Q
}\quad \quad 
\xymatrix{
\iota^0_T(E) \ar[rr]^{\eta_P} \ar[d]_{\iota^1(E)} & & P \ar[d]^{\phi}
\\
\iota^0_T(E) \ar[rr]^{\eta_{Q}} & & Q
}
\end{equation*}
\item[$\blacktriangleright$] 
composition and identity coincide with those in $\Ef_T$. 
\end{itemize}  
\end{proposition}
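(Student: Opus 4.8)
The plan is to deduce both assertions directly from the bicategory axioms of Definition~\ref{def: bicat} applied to $\Ef_T$, together with the explicit description of the vertical structure of $\Ef_T$ given in Proposition~\ref{prop: T-bicat}; morally this is just the classical observation that monads and their morphisms, over a fixed object of any fixed bicategory, form a $1$-category.

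First I would check that $\Ef^T_E$ is a $1$-category. By Proposition~\ref{prop: T-bicat}, vertical composition $\circ^2_1$ in $\Ef_T$ is nothing but composition of $1$-arrows in $\Ef$ and vertical identities $\iota^1_{\Ef_T}$ are the identities $\iota^0_\Ef$ of the relevant apices in $\Ef$; since a $2$-cell of $\Ef_T$ exists only between parallel $1$-cells, the $1$-cells with source and target $E$, together with all $2$-cells among them, form a subcollection of the arrows of $\Ef$ that is closed under $\circ^2_1$ and contains the corresponding $\iota^1_{\Ef_T}$. Associativity and unitality of $\circ^2_1$ are then inherited verbatim from $\Ef$ — equivalently, they are part of the requirement in Definition~\ref{def: bicat} that $\Bf^1\leftleftarrows\Bf^2$ be a strict $1$-category — so $\Ef^T_E$ is a $1$-category.

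For the second statement I would verify that the $T$-multicategories $(P,\mu_P,\eta_P)$, together with the $2$-cells satisfying the two displayed commuting squares, form a subcategory of $\Ef^T_E$; as composition and identities are taken from $\Ef^T_E$, it suffices to check closure under identities and composition. For identities, given $(P,\mu_P,\eta_P)$ and $\phi:=\iota^1_{\Ef_T}(P)$, the fact that $(\circ^2_0,\circ^1_0)$ is a $1$-functor (the bicategory axiom) yields $\phi\circ^2_0\phi=\iota^1_{\Ef_T}(P\circ^1_0 P)$, so both squares collapse, by unitality of $\circ^2_1$, to the trivial equalities $\mu_P=\mu_P$ and $\eta_P=\eta_P$. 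For composition, let $(P,\mu_P,\eta_P)\xrightarrow{\phi}(Q,\mu_Q,\eta_Q)\xrightarrow{\psi}(R,\mu_R,\eta_R)$ be morphisms of $T$-multicategories; using associativity of $\circ^2_1$, the defining squares of $\phi$ and $\psi$, and the interchange law $(\psi\circ^2_0\psi)\circ^2_1(\phi\circ^2_0\phi)=(\psi\circ^2_1\phi)\circ^2_0(\psi\circ^2_1\phi)$ — again functoriality of $\circ^2_0$ — one computes
\begin{align*}
(\psi\circ^2_1\phi)\circ^2_1\mu_P
&= \psi\circ^2_1\mu_Q\circ^2_1(\phi\circ^2_0\phi) \\
&= \mu_R\circ^2_1(\psi\circ^2_0\psi)\circ^2_1(\phi\circ^2_0\phi) \\
&= \mu_R\circ^2_1\bigl((\psi\circ^2_1\phi)\circ^2_0(\psi\circ^2_1\phi)\bigr),
\end{align*}
which is the multiplication square for $\psi\circ^2_1\phi$; the unit square is handled identically, reducing $\phi\circ^2_1\eta_P$ to $\eta_Q$ and $\psi\circ^2_1\eta_Q$ to $\eta_R$ by unitality of $\circ^2_1$ and concluding $(\psi\circ^2_1\phi)\circ^2_1\eta_P=\eta_R$. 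Hence $\Cf^T_E$ is a well-defined subcategory of $\Ef^T_E$.

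I do not expect a genuine obstacle here: neither the monad axioms for $(P,\mu_P,\eta_P)$ nor the coherence isomorphisms $\alpha_T,\lambda_T,\rho_T$ of $\Ef_T$ enter the argument — they are already absorbed into the statements being manipulated — so the only substantive ingredient is the functoriality of the horizontal composition $\circ^2_0$. The sole mild subtlety is bookkeeping the order of the $\circ^2_1$-composites and recognising precisely where the interchange law and the preservation of identities by $\circ^2_0$ are invoked; everything else is a routine diagram chase.
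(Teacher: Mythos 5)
Your proof is correct and follows exactly the standard argument that the paper leaves implicit (the proposition is stated without proof): the only substantive ingredients are that vertical composition and identities in $\Ef_T$ are those of $\Ef$, making $\Ef^T_E$ a $1$-category, and that $(\circ^2_0,\circ^1_0)$ is a $1$-functor, which supplies the interchange law and preservation of identities needed to show $\Cf^T_E$ is closed under composition and identities. Your computation of the multiplication square for $\psi\circ^2_1\phi$ and the observation that neither the monad axioms nor the coherence cells $\alpha_T,\lambda_T,\rho_T$ are needed are both accurate.
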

\begin{definition}\label{def: ET}
For a terminal object $\bullet$ in a Cartesian category $\Ef$ with a Cartesian monad $T$ we denote by: 
\begin{itemize}
\item[$\blacktriangleright$]
$\Ef^T_\bullet$ the  \emph{category of $T$-collections} over $\bullet$
\item[$\blacktriangleright$] 
$\Of^T_\bullet:=\Cf^T_\bullet$ the \emph{category of $T$-operads} over $\bullet$. 
\end{itemize}
\end{definition}

\subsubsection{Contractions and Globular Collections} 

We now specialize the discussion to the bicategory $\Qf_{\hat{T}}$ constructed from the Cartesian category $\Ef:=\Qf$ of globular $\omega$-sets equipped with the free strict globular $\omega$-category Cartesian monad $\hat{T}$. 

\medskip  

The codification of the algebraic axioms for weak globular $\omega$-categories via ``contractions'' originates in~\cite{Pe99}; the following notion of contraction, for globular $\omega$-sets, appears in~\cite[section~II.5]{Lei01} and~\cite[section~III.9.1]{Lei04} and is actually used to formalize, at the same time, the (evaluation of) operations and the algebraic and coherence axioms for weak globular $\omega$-categories. 
\begin{definition}\label{def: L-contraction}
Let $Q\in\Qf^0$ be a globular $\omega$-set; we say that $x_1,x_2\in Q^k$ are \emph{parallel $k$-cells} if either $k=0$ or 
\begin{gather*}
s^{k-1}(x_1)=s^{k-1}(x_2), \quad \quad t^{k-1}(x_1)=t^{k-1}(x_2). 
\end{gather*}
We denote by $\Par_Q$ the family of pairs of parallel cells of the globular $\omega$-set $Q$. 

\medskip 

Let $Q_1\xrightarrow{\pi}Q_2$ be a covariant morphism in the category $\Qf$ of globular $\omega$-sets. 

\medskip 

we define $\Par(\pi):=\Big\{(x^+,y,x^-) \ | \ (x^+,x^-)\in\Par_{Q_1}, \ y\in Q_2^n, \ n\in\NN_0,\ \pi(x^+)=t^{n-1}(y), \ s^{n-1}(y)=\pi(x^-) \Big\}$. 

\medskip 

A \emph{Leinster contraction on $\pi$} is a map $\kappa:\Par(\pi)\to Q_1$ such that:  
\begin{equation}\label{eq: L-con}
s(\kappa(x^+,y,x^-))=x^-, \quad t(\kappa(x^+,y,x^-))=x^+, \quad \pi(\kappa(x^+,y,x^-))=y, \quad \forall (x^+,y,x^-)\in\Par(\pi). 
\end{equation}
\end{definition}

\begin{definition}\label{def: contracted_T-coll}
Let $\bullet\in\Qf$ denote a terminal object in the category of globular $\omega$-sets and $T$ a monad on $\Qf$.

\medskip 

A \emph{globular $T$-collection} is a morphism $Q\xrightarrow{\pi} T(\bullet)$ in $\Qf$; 
a \emph{globular contracted $T$-collection}  
consists of a Leinster contraction $\kappa$ on a globular $T$-collection 
$\Par(\pi) \xrightarrow{\kappa} Q \xrightarrow{\pi} T(\bullet)$. 
\end{definition}

\begin{lemma} \label{lem: parallel}
Given a commuting diagram of covariant functors in the category $\Qf$ of globular $\omega$-sets: 
\begin{equation*}
\xymatrix{
Q_1  \ar[rrrr]^{\phi} \ar[rrd]_{\pi_1} & & & & \ar[dll]^{\pi_2} \hat{Q}_1
\\
& & Q_2 & & 
}
\end{equation*}
there is a well-defined induced map $\Par(\pi_1)\xrightarrow{\Par_\phi}\Par(\pi_2)$ given by $\Par_\phi:(x^+,y,x^-)\mapsto (\phi(x^+),y,\phi(x^-))$. 
\end{lemma}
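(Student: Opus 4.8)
The plan is to verify that $\Par_\phi$ lands in $\Par(\pi_2)$ and then observe that no further checks are needed, since the assignment is given by an explicit formula on triples. First I would unwind the two definitions: a typical element of $\Par(\pi_1)$ is a triple $(x^+,y,x^-)$ with $(x^+,x^-)\in\Par_{Q_1}$, $y\in Q_2^n$ for some $n\in\NN_0$, and the compatibility conditions $\pi_1(x^+)=t^{n-1}(y)$ and $s^{n-1}(y)=\pi_1(x^-)$. I must produce from this a triple in $\Par(\pi_2)$, and the natural candidate is $(\phi(x^+),y,\phi(x^-))$: note the middle entry $y$ is left untouched, as it lives in $Q_2$ which is the common codomain.

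The three things to check are: (i) $(\phi(x^+),\phi(x^-))\in\Par_{\hat{Q}_1}$, i.e. parallelism is preserved by $\phi$; (ii) $\pi_2(\phi(x^+))=t^{n-1}(y)$; and (iii) $s^{n-1}(y)=\pi_2(\phi(x^-))$. For (i), if the common $k$ of $x^\pm$ is $0$ there is nothing to prove; if $k\geq 1$, then from $s^{k-1}(x^+)=s^{k-1}(x^-)$ and $t^{k-1}(x^+)=t^{k-1}(x^-)$ I apply $\phi$ and use that $\phi$ is a covariant morphism of globular $\omega$-sets (definition~\ref{def: omega-cat}), so $\hat{s}^{k-1}\circ\phi^k=\phi^{k-1}\circ s^{k-1}$ and likewise for $t$; this immediately gives $\hat{s}^{k-1}(\phi(x^+))=\hat{s}^{k-1}(\phi(x^-))$ and the target analogue, which is exactly parallelism in $\hat{Q}_1$. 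For (ii) and (iii) I use the hypothesis that the diagram commutes, $\pi_2\circ\phi=\pi_1$: thus $\pi_2(\phi(x^+))=\pi_1(x^+)=t^{n-1}(y)$ and $\pi_2(\phi(x^-))=\pi_1(x^-)$, so $s^{n-1}(y)=\pi_1(x^-)=\pi_2(\phi(x^-))$. Here one should also note the degenerate case $n=0$: then the conditions involving $t^{n-1}$ and $s^{n-1}$ read as conditions on $y\in Q_2^0$ placed directly against $\pi_i(x^\pm)\in Q_2^0$, and the same commutativity argument applies verbatim.

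Finally, well-definedness in the sense of the statement just means the formula $(x^+,y,x^-)\mapsto(\phi(x^+),y,\phi(x^-))$ does not depend on any choices — and it manifestly does not, since it is a set-map defined pointwise by applying $\phi$ to the outer coordinates. So once (i)--(iii) are established the map $\Par(\pi_1)\xrightarrow{\Par_\phi}\Par(\pi_2)$ is determined, and the proof is complete.

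I do not anticipate a genuine obstacle here: the only place requiring care is making sure the globularity/morphism identities are applied in the correct variance (covariant, so source goes to source and target to target, without the swap in~\eqref{eq: con}) and handling the boundary value $n=0$ (and $k=0$) gracefully so that the ``$t^{n-1}$'' and ``$s^{n-1}$'' notation is read correctly. If anything, the mild subtlety is purely notational: confirming that the paper's convention for $t^{n-1}$ at $n=0$ is compatible with the defining conditions of $\Par(\pi_i)$, after which the argument is a two-line diagram chase.
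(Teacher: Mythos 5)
Your verification is correct and is exactly the routine diagram chase the paper has in mind (the lemma is stated without proof there): covariance of $\phi$ preserves parallelism, and $\pi_2\circ\phi=\pi_1$ gives the two compatibility conditions. Your worry about $n=0$ is moot, since the conditions $\pi(x^\pm)=t^{n-1}(y),s^{n-1}(y)$ force $x^\pm$ to be $(n-1)$-cells, so only $n\geq 1$ actually occurs in $\Par(\pi)$ — but noting it does no harm.
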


We recall here the notion of category of globular contracted operads from~\cite[definition~III.9.2.3]{Lei04}.
\begin{proposition} \label{def: g-c-T-operads}
For any terminal object $\bullet\in\Qf^0$, we have the following categories: 
\begin{itemize}
\item[$\blacktriangleright$]
the \emph{category $\Qf^{\hat{T}}_\bullet$ of globular $\hat{T}$-collections over $\bullet$}, specified as in definition~\ref{def: ET}; 
\item[$\blacktriangleright$] 
the \emph{category $\Qf^{\hat{T},\kappa}_\bullet$ of globular contracted $\hat{T}$-collections over $\bullet$} whose objects are globular contracted $\hat{T}$-collections and whose morphisms $(P,\pi_P,\kappa_P)\xrightarrow{\phi}(Q,\pi_Q,\kappa_Q)$ are globular $\omega$-functors $P\xrightarrow{\phi}Q$ such that 
\begin{equation*}
\vcenter{
\xymatrix{
\Par(\pi_P) \ar[d]_{\kappa_P} \ar[rrr]^{\Par_\phi} & &  & \Par(\pi_Q) \ar[d]^{\kappa_Q}
\\
P \ar[rrr]^{\phi} & & & Q  
}
} \quad \quad 
\phi\circ \kappa_P=\kappa_Q\circ \Par_\phi;
\end{equation*}  
\item[$\blacktriangleright$] 
the \emph{category $\Of^{\hat{T}}_\bullet$ of globular $\hat{T}$-operads over $\bullet$}, specified as in definition~\ref{def: ET}; 
\item[$\blacktriangleright$] 
the \emph{category $\Of^{\hat{T},\kappa}_\bullet$ of globular contracted $\hat{T}$-operads over $\bullet$}
with objects $(P,\pi_P,\kappa_P,\mu_P,\eta_P)$ such that 
$(P,\pi_P,\mu_P,\eta_P)\in\Of^{\hat{T}}_\bullet$ is a globular $\hat{T}$-operad and $(P,\pi_P,\kappa_P)\in\Qf^{\hat{T}}_\bullet$ is a globular contracted $\hat{T}$-collection, \footnote{
\label{foo: ope-con}
Notice that at this point we are not imposing any compatibility axiomatic requirement between the contraction $\kappa$ and the operadic unit $\eta$ and multiplication $\mu$ simultaneously defined on a $\hat{T}$-collection; we will address this issue in remark~\ref{rem: ope-con} here below. 
} 
and with morphisms $(P,\pi_P,\kappa_P,\mu_P,\eta_P)\xrightarrow{\phi}(Q,\pi_Q,\kappa_Q,\mu_Q,\eta_Q)$ such that $(P,\pi_P,\mu_P,\eta_P)\xrightarrow{\phi}(Q,\pi_Q,\mu_Q,\eta_Q)$ is a morphism in $\Of^{\hat{T}}_\bullet$ and $(P,\pi_P,\kappa_P)\xrightarrow{\phi}(Q,\pi_Q,\kappa_Q)$ is a morphism in $\Qf^{\hat{T},\kappa}_\bullet$. 
\end{itemize}
There is a forgetful functor $\Qf^{\hat{T},\kappa}_\bullet\xrightarrow{\Wg}\Qf^{\hat{T}}_\bullet$ defined on object by $(Q,\pi,\kappa)\mapsto (Q,\pi)$, for every globular contracted $\hat{T}$-collection $(Q,\pi,\kappa)$ and as identity map on morphisms.

\medskip 

There is a forgetful functor $\Of^{\hat{T},\kappa}_\bullet \xrightarrow{\Ug} \Of^{\hat{T}}_\bullet$ defined on object by $(P,\pi,\kappa,\mu,\eta)\mapsto (P,\pi,\mu,\eta)$, for every globular contracted $\hat{T}$-operad $(P,\pi,\kappa,\mu,\eta)$ and as identity map on morphisms.
\end{proposition}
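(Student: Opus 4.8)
The plan is: (i) show the parallel-cells assignment $\Par$ is functorial; (ii) realise each of the four categories as (a category lying over) a subcategory of one common ambient category of globular $\hat T$-collections; (iii) note that an assignment which forgets a piece of object-data while being the identity on arrows is automatically a functor.

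First I would fix the ambient category. Instantiating Proposition~\ref{prop: T-bicat} and Definition~\ref{def: ET} with $\Ef=\Qf$ and $T=\hat T$ produces the bicategory $\Qf_{\hat T}$, and by Proposition~\ref{prop: operad-cat} the category $\Qf^{\hat T}_\bullet$, whose objects are globular $\hat T$-collections $Q\xrightarrow{\pi}\hat T(\bullet)$ and whose arrows $(P,\pi_P)\to(Q,\pi_Q)$ are morphisms of globular $\omega$-sets $P\xrightarrow{\phi}Q$ with $\pi_Q\circ\phi=\pi_P$, composition and identities being those of $\Qf$. Next I would record the functoriality of $\Par$. Lemma~\ref{lem: parallel} supplies, for each such commuting triangle $\phi$ over $\hat T(\bullet)$, the map $\Par_\phi\colon\Par(\pi_P)\to\Par(\pi_Q)$, $(x^+,y,x^-)\mapsto(\phi(x^+),y,\phi(x^-))$; inspecting this formula (the middle entry is untouched, the outer entries transform strictly functorially) gives $\Par_{\id}=\id$ and $\Par_{\psi\circ\phi}=\Par_\psi\circ\Par_\phi$, so $\Par$ is a functor $\Qf^{\hat T}_\bullet\to\mathbf{Set}$, $(Q,\pi)\mapsto\Par(\pi)$, $\phi\mapsto\Par_\phi$. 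I would emphasise here that a morphism of contracted collections is, before anything else, required to respect the structure maps $\pi$, as otherwise $\Par_\phi$ is not even defined.

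The remaining verifications are then bookkeeping. For $\Qf^{\hat T,\kappa}_\bullet$: an object adjoins a Leinster contraction $\kappa$ (Definition~\ref{def: contracted_T-coll}) and an arrow adjoins the condition $\phi\circ\kappa_P=\kappa_Q\circ\Par_\phi$. By $\Par_{\id}=\id$ this holds for identities, and by $\Par_{\psi\circ\phi}=\Par_\psi\circ\Par_\phi$ it is closed under composition, since $(\psi\circ\phi)\circ\kappa_P=\psi\circ\kappa_Q\circ\Par_\phi=\kappa_R\circ\Par_\psi\circ\Par_\phi=\kappa_R\circ\Par_{\psi\circ\phi}$; associativity and the unit laws are inherited from $\Qf^{\hat T}_\bullet$. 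The forgetful $\Wg\colon(Q,\pi,\kappa)\mapsto(Q,\pi)$ is the identity on arrows --- which is legitimate because an arrow of $\Qf^{\hat T,\kappa}_\bullet$ is in particular an arrow of $\Qf^{\hat T}_\bullet$ --- hence it automatically preserves identities and composition. For the operadic level, $\Of^{\hat T}_\bullet=\Cf^{\hat T}_\bullet$ is a category by Proposition~\ref{prop: operad-cat}, and $\Of^{\hat T,\kappa}_\bullet$ has objects obtained by adjoining a contraction to an operad (with no compatibility imposed between $\kappa$ and $\mu,\eta$, as in the statement) and arrows that lie simultaneously in $\Of^{\hat T}_\bullet$ and in $\Qf^{\hat T,\kappa}_\bullet$; the intersection of two arrow-classes each closed under the composition of $\Qf^{\hat T}_\bullet$ and each containing identities is again closed and contains identities, so $\Of^{\hat T,\kappa}_\bullet$ is a category, and $\Ug\colon(P,\pi,\kappa,\mu,\eta)\mapsto(P,\pi,\mu,\eta)$, again the identity on arrows, is a functor for exactly the reason $\Wg$ is. Size/foundational issues are covered by the convention in the footnote opening Section~\ref{sec: pre}. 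The only genuine content is the functoriality of $\Par$ and the observation just flagged; I anticipate no real obstacle beyond stating those explicitly.
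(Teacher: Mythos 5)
Your proposal is correct. The paper itself offers no proof of this proposition (it is recalled from Leinster's definition~III.9.2.3 and left as a routine verification), and your argument supplies exactly the expected content: the only non-trivial point is the strict functoriality of $\Par$ on the over-category $\Qf^{\hat T}_\bullet$ ($\Par_{\id}=\id$, $\Par_{\psi\circ\phi}=\Par_\psi\circ\Par_\phi$, which indeed requires that morphisms commute with the projections to $\hat T(\bullet)$ for $\Par_\phi$ to be defined at all), after which closure of the contraction-compatibility condition under composition, the intersection argument for $\Of^{\hat T,\kappa}_\bullet$, and the functoriality of the identity-on-arrows forgetful assignments $\Wg$ and $\Ug$ all follow as you describe.
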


\begin{remark}\label{rem: w-inv-omega}
Although $T$-operads are defined for any operad $T$ on a Cartesian category $\Ef$, Leinster's contractions are defined only on globular $T$-collections, where $T$ is an arbitrary monad on the Cartesian category $\Qf$ of globular $\omega$-sets.  
As a consequence, proposition~\ref{def: g-c-T-operads} holds actually for any Cartesian operad $T$ on the Cartesian category of globular $\omega$-sets $\Qf$. We will use this fact, substituting the free strict globular $\omega$-category monad $\hat{T}$ with the free involutive strict globular $\omega$-category monad $\hat{T}^\star$ in the next section~\ref{sec: w-inv-omega}. 
\xqed{\lrcorner}
\end{remark}

\begin{remark} \label{rem: ope-con}
As anticipated in footnote~\ref{foo: ope-con}, following the treatment in~\cite[definition 9.2.1]{Lei04}, we did not include in our definition of globular contracted $\hat{T}$-operad, in the last point of proposition~\ref{def: g-c-T-operads}, any further compatibility axioms between contractions and operadic units/multiplications. 

\medskip 

We introduce here a more restrictive notion of \emph{globular $\hat{T}$-operadic contraction} over $\bullet$, consisting of data $(P,\pi_P,\kappa_P,\eta_P,\mu_P)$ such that $(P,\pi_P,\kappa_P)$ is a globular contracted $\hat{T}$-collection in $\Qf^{\hat{T},\kappa}_\bullet$ and $(P,\pi_P,\eta_P,\mu_P)$ is a globular $\hat{T}$-operad in $\Of^{\hat{T}}_\bullet$ that furthermore satisfy the commutativity of the following diagrams in $\Qf^{\hat{T}}_\bullet$: 
\begin{gather*}
\xymatrix{
\Par(\pi_\bullet) \ar[d]_{\kappa_\bullet} \ar[rrr]^{\Par_{\eta_P}} & & &  \Par(\pi_P) \ar[d]^{\kappa_P}
\\
\bullet \ar[rrr]^{\eta_P} & & & P
}
\quad \quad 
\xymatrix{
\Par(\pi_P\circ^2_0\pi_P) \ar[d]_{\kappa_{P}\circ^2_0\kappa_P} \ar[rrr]^{\Par_{\mu_P}} & & & \Par(\pi_P) \ar[d]^{\kappa_P}
\\
P\circ^1_0 P \ar[rrr]^{\mu_P} & & & P
}
\end{gather*}
where $\bullet\xrightarrow{\pi_\bullet}\hat{T}(\bullet)$ is $\pi_\bullet:=\eta^T_\bullet$, we have $P\circ^1_0P\xrightarrow{\pi_P\circ^2_0\pi_P}\hat{T}(\bullet)\circ^1_0\hat{T}(\bullet)$ and lemma~\ref{lem: parallel} provides $\Par_{\eta_P}$ and $\Par_{\mu_P}$. \footnote{
Notice that the contraction $\kappa_\bullet:(x^+,y,x^-)\mapsto y$ is uniquely determined by the defining axioms of contraction and the definition of $\pi_\bullet$. 
}

\medskip 

The category $\Of\Kf^{\hat{T}}_\bullet$ of globular $\hat{T}$-operadic contractions over $\bullet$ is just the full subcategory of $\Of^{\hat{T},\kappa}_\bullet$ determined by the objects defined above. 

\medskip 

It is perfectly viable to use the category $\Of\Kf^{\hat{T}}_\bullet$ instead of $\Of^{\hat{T},\kappa}_\bullet$ in order to define a slightly more restrictive notion of weak globular $\omega$-category as an algebra for the initial object in $\Of\Kf^{\hat{T}}_\bullet$, whose existence can be obtained following similar steps and in the case examined in the subsequent subsection~\ref{sec: L-w-T}. 
\xqed{\lrcorner}
\end{remark}

\subsubsection{Weak $\omega$-categories}\label{sec: L-w-T}

The subsequent fundamental result is proved in~\cite[appendix~G]{Lei04}.
\begin{theorem}\label{prop: L}
The category $\Of^{\hat{T},\kappa}_\bullet$ of globular contracted $\hat{T}$-operads has initial objects. 
\end{theorem}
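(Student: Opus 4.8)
The plan is to construct the initial object in $\Of^{\hat{T},\kappa}_\bullet$ explicitly, by a transfinite/recursive generation procedure followed by a quotient, in the spirit of the construction announced in theorem~\ref{th: free-c-T*} and mirroring Leinster's appendix~G. Concretely, I would first build a \emph{free contracted $\hat{T}$-operadic magma}: start from the globular $\hat{T}$-collection $\pi_\bullet:=\eta^{\hat{T}}_\bullet:\bullet\to\hat{T}(\bullet)$, which carries the canonical contraction $\kappa_\bullet:(x^+,y,x^-)\mapsto y$, and then recursively freely adjoin (a) formal operadic composites required by the monad multiplication $\mu$, (b) formal operadic units required by $\eta$, and (c) for every parallel pair sitting over a suitable cell of $\hat{T}(\bullet)$, a formal contraction cell as forced by the axioms \eqref{eq: L-con}. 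At each stage one takes a pushout/coproduct in $\Qf$ adding the new generators together with the globular source/target data dictated by \eqref{eq: L-con} and by the operad structure, and lifts $\pi$ along the unique map into $\hat{T}(\bullet)$. Since $\Qf$ is Cartesian (proposition~\ref{prop: Q-cart}) and in particular cocomplete enough for this, and since $\hat{T}$ is Cartesian so that the relevant composite collections $P\circ^1_0 P$ behave well under these colimits, the recursion is well-defined and its colimit $M$ is a globular $\omega$-set over $\hat{T}(\bullet)$ equipped with a contraction and with ``magma'' operations $\mu,\eta$ that need not yet satisfy the operad associativity/unit laws.

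Next I would pass to the \emph{free contracted $\hat{T}$-operad} by quotienting $M$ by the smallest congruence (compatible with source, target, identities, all the $\hat{T}$-operad composition operations, and the contraction map $\kappa$) that forces the monad axioms \eqref{eq: monadic-ax} for $(\mu,\eta)$ to hold. One must check that this congruence is well-defined — i.e.\ that the equations one imposes are closed under the globular structure and under $\Par_\phi$ of lemma~\ref{lem: parallel}, so that $\kappa$ descends to the quotient — and that the quotient $L$ is again a globular $\hat{T}$-collection, now a genuine object of $\Of^{\hat{T},\kappa}_\bullet$. Finally, for the universal property: given any globular contracted $\hat{T}$-operad $(Q,\pi_Q,\kappa_Q,\mu_Q,\eta_Q)$, one defines a morphism $L\to Q$ by recursion along the generating stages of $M$ — generators are sent to the forced composites/units/contraction-values in $Q$, this is forced at every stage by the defining axioms, hence unique — and one checks it factors through the congruence (it must, since $Q$ satisfies the operad axioms) and commutes with $\pi$, $\mu$, $\eta$, and $\kappa$. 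Uniqueness is immediate because every cell of $L$ is a composite of generators and the value on each generator is pinned down.

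The main obstacle is the interaction between the \emph{contraction} and the \emph{operad} structure during the construction: at each recursive step one adjoins contraction cells for parallel pairs, but those new cells can themselves become parallel to further cells (and can be fed into $\mu$, $\eta$), generating new obligations, so the recursion genuinely must be iterated (transfinitely, or interleaved carefully) and one must verify it stabilizes to a colimit with \emph{both} a total contraction $\kappa$ (defined on all of $\Par(\pi_L)$) and well-defined operad operations. Equally delicate is showing the quotient congruence is compatible with $\kappa$: one needs that if two composite cells are identified by the operad laws then their images under $\kappa$ (when they bound parallel pairs) are also identified, which requires the coherence of the whole package — essentially the content of lemma~\ref{lem: parallel} applied systematically. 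Once these compatibility bookkeeping points are handled, initiality is formal. I would also note (as the excerpt itself does) that since the relevant notions make sense for \emph{any} Cartesian monad on $\Qf$, the same argument with $\hat{T}$ replaced by $\hat{T}^\star$ yields theorem~\ref{th: L*}; I would structure the proof so that the $\hat{T}$-specificity is never used, only Cartesianity from proposition~\ref{prop: Q-cart}.
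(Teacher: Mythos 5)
Your strategy is workable, but it is not the route the paper takes for this particular theorem. The paper proves theorem~\ref{prop: L} indirectly, following Leinster's appendix~G: it shows that the two forgetful functors $\Of^{\hat{T}}_\bullet\to\Qf^{\hat{T}}_\bullet$ and $\Qf^{\hat{T},\kappa}_\bullet\to\Qf^{\hat{T}}_\bullet$ are finitary and monadic, that $\Qf^{\hat{T}}_\bullet$ is locally finitely presentable, and then applies Kelly's result (lemma~\ref{lem: kelly}) to the strict pullback $\Of^{\hat{T},\kappa}_\bullet$ of these two functors to conclude that the composite down to $\Qf^{\hat{T}}_\bullet$ is monadic, whence an initial object exists. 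Your direct generators-and-relations argument --- free contracted operadic magma by recursion, quotient by the smallest compatible congruence enforcing the monad axioms \eqref{eq: monadic-ax}, then the universal property --- is essentially the argument the paper itself deploys later for the involutive analogue (theorems~\ref{th: L*} and~\ref{th: free-c-T*}), and the paper remarks explicitly that the two routes are interchangeable. What the indirect route buys is that all the bookkeeping you correctly identify as delicate (stabilization of the interleaved adjunction of contraction and composition cells, descent of $\kappa$ along the quotient) is absorbed into general transfinite-construction and monadicity theorems; what your route buys is an explicit description of the cells of $L$, a concrete handle on the universal property, and independence from the finitary/locally-finitely-presentable hypotheses.

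Three points in your write-up need repair. First, the initial object is the free contracted $\hat{T}$-operad on the \emph{empty} $\hat{T}$-collection (left adjoints preserve initial objects), so the recursion must start from $\varnothing$ and let the freely adjoined operadic unit supply the single cell over $\eta^{\hat{T}}_\bullet(\bullet^n)$ in each dimension; if you literally seed the construction with the collection $\bullet\xrightarrow{\eta^{\hat{T}}_\bullet}\hat{T}(\bullet)$ \emph{and} additionally adjoin unit cells as in your step~(b), you obtain the free contracted operad on $\bullet$, which in general admits several morphisms to a given contracted operad and is therefore not initial. Second, $\bullet\xrightarrow{\eta^{\hat{T}}_\bullet}\hat{T}(\bullet)$ does not actually carry a contraction: $\Par(\pi_\bullet)$ contains a triple $(x^+,y,x^-)$ for every $y\in\hat{T}(\bullet)^n$ whose source and target equal the generating $(n-1)$-cell (e.g.\ identities and composites of generators), and $\bullet^n$ has no cell over such a $y$ unless $y$ is the generator itself; this is harmless for your construction, since step~(c) adjoins the missing contraction cells, but the parenthetical claim should be dropped. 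Third, Cartesianity of $\Qf$ (proposition~\ref{prop: Q-cart}) provides pullbacks, not colimits; the coproducts and unions your recursion needs come from $\Qf$ being a presheaf category. With these adjustments, and with ``congruence'' defined so as to be closed under sources, targets, the operadic operations \emph{and} the contraction (so that a smallest such congruence containing the unitality/associativity pairs exists, the kernel of the projection to the terminal contracted operad $\hat{T}(\bullet)$ witnessing nonemptiness of the family), your argument goes through.
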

The proof provided in the reference above is not direct, and is based on the following definitions and results. 
\begin{definition}
A strict 1-category $\Cs$ is \emph{filtered} if any finite diagram $\Ds\xrightarrow{\Dg}\Cs$ admits a co-cone. \footnote{ 
This is equivalent to requiring: 
a) that there exists at least an object in $\Cs^0$ (the vertex of a co-cone on the empty diagram); 
b) for any two objects $A_1,A_2\in\Cs^0$, there exists a co-cone $A_1\xrightarrow{\alpha_1}C\xleftarrow{\alpha_2}A_2$; 
c) for any finite diagram consisting of pair of parallel arrows $A_1\xrightarrow{f,g}A_2$, there exists a co-cone $A_1\xrightarrow{\alpha_1}C\xleftarrow{\alpha_2}A_1$ and hence $\alpha_2\circ f=\alpha_1=\alpha_2\circ g$. 
} 

\medskip 

A \emph{filtered co-limit} is a co-limit for a diagram $\Ds \xrightarrow{\Dg}\Bs$ with filtered 1-category $\Ds$. 

\medskip 

A \emph{finitary functor} is a functor $\As\xrightarrow{\Fg}\Bs$ that preserves filtered co-limits. 

\medskip 

A \emph{finitely presentable object} is an object $A\in\Cs^0$ whose covariant $\Hom$-functor $\Hom_\Cs(A,-)$ is finitary. 

\medskip 
A category $\Cs$ is \emph{locally finite presentable} if the following properties are satisfied: 
\begin{itemize}
\item[$\blacktriangleright$]
$\Cs$ is co-complete (it admits all co-limits of diagrams $\Ds\xrightarrow{\Dg}\Cs$ with small $\Ds$),  
\item[$\blacktriangleright$]
the full subcategory of finitely presentable objects of $\Cs$ is essentially small, \footnote{
Recall that in a \emph{locally small category} $\Cs$, for all objects $A,B\in\Cs^0$, $\Hom_\Cs(A,B)$ is a set; a locally small category is \emph{small} if and only if $\Cs^0$ is a set; a category is \emph{essentially small} if and only if it equivalent to a small category. 
}
\item[$\blacktriangleright$]
every object of $\Cs$ is a filtered co-limit of at least one diagram of finitely presentable objects. 
\end{itemize}
\end{definition}

This first lemma is a consequence of \cite[theorems~6.5.1, 6.5.2 and 6.5.4 in appendix D]{Lei04}. 
\begin{lemma}
If $T$ is a finitary Cartesian monad on a Cartesian category $\Ef$, the forgetful functor  $\Of^T_\bullet\xrightarrow{\Ug}\Ef^T_\bullet$ has a left adjoint and the adjunction is monadic. 

In particular, from proposition~\ref{prop: Q-cart}, we have that $\Of^{\hat{T}}_\bullet\xrightarrow{\Ug}\Qf^{\hat{T}}_\bullet$ is monadic. 

\medskip 

Furthermore $\Ug$ is finitary \cite[appendix G page 353]{Lei04}. 
\end{lemma}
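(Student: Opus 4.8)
The plan is to recognise $\Of^T_\bullet$ as the category of monoids in a locally finitely presentable monoidal category with finitary tensor product, and then to quote the standard free-monoid and monadicity machinery for such categories, as organised in \cite[appendix~D]{Lei04}; the Cartesianity and finitariness of $T$, together with the fact that $\Ef$ is itself locally finitely presentable --- in the case at hand $\Ef=\Qf$, a presheaf category on the globe category by proposition~\ref{prop: Q-cart} --- supply exactly the cocompleteness and exactness that machinery requires. Concretely, since $\bullet$ is terminal in $\Ef$, the full sub-bicategory of $\Ef_T$ (proposition~\ref{prop: T-bicat}) on the single object $\bullet$ is a one-object bicategory, i.e.\ a (not necessarily strict) monoidal category $(\V,\otimes,I)$ with underlying category $\V:=\Ef^T_\bullet$, tensor product $\otimes:=\circ^1_0$ and unit $I:=\iota^0_T(\bullet)$; under this identification a $T$-operad is precisely a monoid in $\V$, so $\Of^T_\bullet=\mathrm{Mon}(\V)$ and $\Ug$ is the usual forgetful functor $\mathrm{Mon}(\V)\to\V$. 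Because $\bullet$ is terminal, the leg $t_P$ of a $T$-collection $\bullet\xleftarrow{t_P}P\xrightarrow{s_P}T(\bullet)$ carries no information, so $\V\cong\Ef/T(\bullet)$, the slice category; hence $\V$ is locally finitely presentable whenever $\Ef$ is.

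Next I would verify that $\otimes$ is finitary in each variable. By the formula for horizontal composition in proposition~\ref{prop: T-bicat}, $P\otimes Q=P\times_{T(\bullet)}T(Q)$ with the induced legs; so $-\otimes Q$ is pullback along the fixed arrow $T(Q)\to T(\bullet)$, while $P\otimes-$ is the composite of $T$ with pullback along the fixed arrow $P\to T(\bullet)$. In the presheaf category $\Qf$, pullback along any morphism preserves all colimits, and $\hat{T}$ is finitary; therefore $\otimes$ preserves filtered colimits separately in each variable.

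Then I would run the formal argument. Cocompleteness of $\V$ together with preservation of sufficiently large filtered colimits by $\otimes$ in each variable makes Kelly's transfinite construction of free monoids converge, yielding a left adjoint $\Fg\dashv\Ug$. The functor $\Ug$ reflects isomorphisms (the $\V$-inverse of an invertible monoid morphism is automatically a monoid morphism), and every $\Ug$-split coequalizer is absolute, hence preserved by $\otimes$ and so lifts uniquely to a coequalizer in $\mathrm{Mon}(\V)$; by Beck's precise monadicity theorem $\Fg\dashv\Ug$ is monadic. Finitariness of $\Ug$ follows because $\Ug$ creates filtered colimits: the $\V$-colimit $C$ of a filtered diagram of monoids carries a unique monoid structure under which the cocone maps are monoid morphisms --- with multiplication $C\otimes C\cong\mathrm{colim}_i(A_i\otimes A_i)\to\mathrm{colim}_iA_i=C$ obtained using finitariness of $\otimes$ and finality of the diagonal of a filtered category --- and $C$ is then the colimit in $\mathrm{Mon}(\V)$. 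Specialising to $\Ef=\Qf$ and $T=\hat{T}$ (Cartesian by proposition~\ref{prop: Q-cart}, finitary by \cite[theorem~F.2.2]{Lei04}) yields all the stated assertions for $\Of^{\hat{T}}_\bullet\xrightarrow{\Ug}\Qf^{\hat{T}}_\bullet$, and the same argument applies verbatim with $\hat{T}$ replaced by $\hat{T}^\star$.

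I expect the main obstacle to be isolating and citing the two genuinely non-formal inputs: first, the finitariness of the free strict (involutive) $\omega$-category monad, which encodes the combinatorics of globular pasting diagrams \cite[theorem~F.2.2]{Lei04} and is not automatic; and second, the exactness of $\Qf$ (pullbacks commuting with filtered colimits, in fact with all colimits), which is where the presheaf-topos structure of $\Qf$ --- not merely its being Cartesian --- is essential. Accordingly, for a general Cartesian $\Ef$ one must add local finite presentability of $\Ef$ as a standing hypothesis, as is implicit in the cited results of \cite[appendix~D and appendix~G]{Lei04}; everything beyond these inputs is the formal theory of monoids in a locally finitely presentable monoidal category.
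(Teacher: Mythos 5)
Your argument is correct, and it reconstructs in a clean way a result that the paper does not actually prove: the authors defer entirely to \cite[theorems~6.5.1, 6.5.2, 6.5.4 in appendix~D]{Lei04}. The route you take is the ``monoids in a monoidal category'' packaging: you identify $\Of^T_\bullet$ with $\mathrm{Mon}(\V)$ for $\V=\Ef^T_\bullet\cong\Ef/T(\bullet)$ with $\otimes=\circ^1_0$, check that $\otimes$ is finitary in each variable (pullback in a presheaf topos preserves all colimits; $\hat T$ is finitary), and then invoke Kelly's transfinite free-monoid construction, Beck's monadicity theorem via absolute coequalizers, and creation of filtered colimits. Leinster's own proof in appendix~D is instead an explicit inductive construction of the free $T$-multicategory on a $T$-graph (building the arrows as formal trees and taking a colimit of the resulting chain); the two are essentially the same argument seen at different levels of abstraction, and your version has the advantage of making visible exactly which hypothesis each step consumes. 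Your closing caveat is also well taken and worth recording: as literally stated the lemma's hypothesis ``$\Ef$ a Cartesian category'' is too weak --- Leinster's theorems require $\Ef$ and $T$ to be \emph{suitable} (in particular one needs cocompleteness and the compatibility of pullbacks with the relevant colimits, which for $\Qf$ comes from its being a presheaf category, as the third lemma of this subsection implicitly acknowledges). Two very minor points of precision: the functor $-\otimes Q$ on the slice is the pullback along $T(!):T(Q)\to T(\bullet)$ \emph{followed by} postcomposition with $\mu^T_\bullet\circ T(s_Q)$ (both colimit-preserving, so your conclusion stands); and for the lifting of split coequalizers to $\mathrm{Mon}(\V)$ one should note that absoluteness gives preservation by $A\otimes-$ and $-\otimes A$ separately, after which the usual reflexive-coequalizer interchange supplies the multiplication on the coequalizer. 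Neither affects the validity of the proof.
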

This second lemma follows from a direct construction contained in~\cite[appendix G page 352]{Lei04}. 
\begin{lemma}
The forgetful functor $\Qf^{\hat{T},\kappa}_\bullet\xrightarrow{\Wg}\Qf^{\hat{T}}_\bullet$ is finitary, has a left adjoint and the adjunction is monadic. 
\end{lemma}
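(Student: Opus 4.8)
The plan is to construct the left adjoint of $\Wg$ explicitly by a free-contraction construction applied fibrewise, then verify finitarity and monadicity. First I would describe, for a fixed globular $\hat{T}$-collection $Q\xrightarrow{\pi}\hat{T}(\bullet)$, the free globular contracted $\hat{T}$-collection on it. The idea is to freely adjoin a lift $\kappa(x^+,y,x^-)$ for every element $(x^+,y,x^-)$ of $\Par(\pi)$ that does not already possess one, dimension by dimension: given the globular $\omega$-set $Q$ built up to level $n$, one sets the level-$(n+1)$ cells of the new globular $\omega$-set to be $Q^{n+1}$ together with one fresh formal cell for each triple in $\Par(\pi)$ whose ``witness'' $y$ lies in $\hat{T}(\bullet)^{n+1}$ and whose boundary pair $(x^+,x^-)$ already sits inside the (partially constructed) globular $\omega$-set, with source, target, and the map to $\hat{T}(\bullet)$ dictated by~\eqref{eq: L-con}. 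Iterating (as in Leinster, \cite[appendix~G page 352]{Lei04}) yields $\Wg^{\mathrm{free}}(Q,\pi)$, and the universal property — any map of $\hat{T}$-collections from $(Q,\pi)$ to the underlying collection of a contracted collection $(R,\rho,\lambda)$ factors uniquely through a morphism of contracted collections — follows by the evident induction on dimension, at each stage using $\lambda$ to define the image of a freshly adjoined witness cell and using the contraction axioms to check well-definedness.

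Next I would address finitarity. The construction above is performed level by level, and at each level only finitely many new cells are produced from finitely many already-present cells and finitely many elements of $\hat{T}(\bullet)$; moreover $\hat{T}$ is finitary by proposition~\ref{prop: Q-cart} (so $\hat{T}(\bullet)$, as the value of a finitary functor, is built from finite data in the relevant sense), and filtered colimits in $\Qf$ are computed pointwise. Consequently the functor $\Wg^{\mathrm{free}}$ commutes with filtered colimits: a cell of $\Wg^{\mathrm{free}}(\mathrm{colim}_i Q_i)$ is a cell of some $\Wg^{\mathrm{free}}(Q_i)$, because it involves only finitely many cells and each such cell already appears at some stage of the colimit, and the filteredness lets us push everything into a common $Q_i$. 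The same bookkeeping shows $\Wg$ itself is finitary (it is essentially the identity on underlying $\omega$-sets, remembering less structure), which is the assertion in the lemma.

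For monadicity I would invoke Beck's monadicity theorem (or the crude monadicity/Duskin version recalled around \cite[chapter~5]{Ri16} and \cite[appendix~D]{Lei04}). One checks: (i) $\Wg$ has the left adjoint $\Wg^{\mathrm{free}}$ just constructed; (ii) $\Wg$ reflects isomorphisms — a morphism of globular contracted $\hat{T}$-collections whose underlying morphism of $\hat{T}$-collections is invertible has an inverse that automatically commutes with the contractions, since the contraction-compatibility equation $\phi\circ\kappa_P=\kappa_Q\circ\Par_\phi$ can be transported along the inverse; (iii) $\Qf^{\hat{T},\kappa}_\bullet$ has, and $\Wg$ preserves, coequalizers of $\Wg$-split pairs — here one uses that $\Qf$ (hence $\Qf^{\hat{T}}_\bullet$) is cocomplete by proposition~\ref{prop: Q-cart} and that, given a split coequalizer downstairs, the splitting lets one transport the contraction structure onto the coequalizer object and check the axioms, the splitting maps providing the needed sections. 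Alternatively, since $\Wg$ is a forgetful functor between locally finitely presentable categories that is finitary and has a left adjoint, one can cite the standard fact that such a functor is monadic once it is also conservative.

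The main obstacle I anticipate is (iii): verifying that $\Qf^{\hat{T},\kappa}_\bullet$ admits coequalizers of reflexive (or $\Wg$-split) pairs and that $\Wg$ preserves them. Coequalizers of contracted collections are delicate because quotienting the underlying $\omega$-set can identify parallel-cell witnesses and thereby destroy or over-determine the contraction map; one must show that the coequalizer in $\Qf^{\hat{T}}_\bullet$ carries a unique induced contraction making the canonical maps morphisms of contracted collections, and that this is again free of ambiguity. Using the split structure (a genuine section of the projection) should resolve this — it lets one define the contraction on the quotient by composing the section with the contraction upstairs — but getting the globularity and the compatibility $\Par_\phi$ equations to match up requires some care, and this is the step where the bulk of the verification in \cite[appendix~G page 352]{Lei04} is hidden.
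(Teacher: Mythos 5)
Your proposal is correct and takes essentially the same route as the paper, which proves this lemma only by delegating to the direct construction in \cite[appendix G page 352]{Lei04}: an explicit dimension-by-dimension free-contraction left adjoint for $\Wg$, finitarity from the fact that each adjoined witness cell depends on only finitely many cells of the input, and monadicity via Beck (with the $\Wg$-split coequalizer step handled by transporting the contraction along the splitting). The only slips are cosmetic and do not affect the argument: the free construction adjoins a fresh witness for \emph{every} triple in $\Par(\pi)$ (there is no notion of a triple ``already possessing'' a lift in a bare collection), and the sets of new cells at each level need not be finite --- what matters, as you then correctly say, is the finite dependency of each individual cell.
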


This third lemma is dealt with in~\cite[appendix G page 352]{Lei04} using that $\Qf^{\hat{T}}_\bullet$ is a pre-sheaf category. 
\begin{lemma} 
The category $\Qf^{\hat{T}}_\bullet$ is locally finitely presentable.  
\end{lemma}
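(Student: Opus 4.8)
The plan is to recognise $\Qf^{\hat{T}}_\bullet$ as a presheaf category over a \emph{small} category and then to invoke the classical theorem of Gabriel--Ulmer, to the effect that every presheaf category $[\C^{\mathrm{op}},\mathbf{Set}]$ over a small category $\C$ is locally finitely presentable (see e.g.\ Ad\'amek and Rosick\'y, \emph{Locally Presentable and Accessible Categories}); this is precisely the line of reasoning sketched in~\cite[appendix~G]{Lei04}.

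First I would note, directly from definition~\ref{def: omega-cat}, that the category $\Qf$ of (non-reflexive) globular $\omega$-sets is the presheaf category $[\GG^{\mathrm{op}},\mathbf{Set}]$, where $\GG$ is the small \emph{globe category}: one object $n$ for each $n\in\NN$, two parallel generating arrows $n\to n+1$, subject to the relations dual to the globularity conditions; a globular $\omega$-set $Q$ corresponds to the functor sending $n$ to $Q^n$ equipped with its source and target maps. Second, by definitions~\ref{def: contracted_T-coll} and~\ref{def: ET} (together with proposition~\ref{prop: operad-cat} specialised to the terminal object $\bullet\in\Qf$), a globular $\hat{T}$-collection is precisely an object $Q\xrightarrow{\pi}\hat{T}(\bullet)$ of the slice category $\Qf/\hat{T}(\bullet)$, its morphisms being the morphisms of globular $\omega$-sets over $\hat{T}(\bullet)$; hence $\Qf^{\hat{T}}_\bullet\simeq\Qf/\hat{T}(\bullet)$. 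Third, I would use the standard fact that a slice of a presheaf category over an object $X$ is again a presheaf category, namely $[\GG^{\mathrm{op}},\mathbf{Set}]/X\simeq[(\mathrm{el}\,X)^{\mathrm{op}},\mathbf{Set}]$, where $\mathrm{el}\,X$ denotes the Grothendieck category of elements of $X$; taking $X:=\hat{T}(\bullet)$ this gives $\Qf^{\hat{T}}_\bullet\simeq[(\mathrm{el}\,\hat{T}(\bullet))^{\mathrm{op}},\mathbf{Set}]$. Finally, granting that $\mathrm{el}\,\hat{T}(\bullet)$ is small, the Gabriel--Ulmer theorem applies and yields that $\Qf^{\hat{T}}_\bullet$ is locally finitely presentable: it is cocomplete, the representable presheaves together with their finite colimits form an essentially small family of finitely presentable objects, and every object is a filtered colimit of these.

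The argument is a routine chain of reductions, so I expect no serious difficulty; the single point that genuinely requires attention — and the reason the lemma rests on proposition~\ref{prop: free-omega-cat} rather than on an abstract monad-theoretic input — is the \emph{smallness} of $\mathrm{el}\,\hat{T}(\bullet)$. Since $\GG$ is small, this reduces to checking that $\hat{T}(\bullet)$ is an honest small globular $\omega$-set, i.e.\ that each $\hat{T}(\bullet)^n$ is genuinely a set and not a proper class; this is guaranteed by the explicit recursive construction of the free strict globular $\omega$-category functor underlying $\hat{T}$ in proposition~\ref{prop: free-omega-cat}, under which $\hat{T}(\bullet)^n$ is identified with the (countable) set of $n$-dimensional globular pasting diagrams. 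With this in place the lemma follows at once, and it furnishes exactly the local-presentability hypothesis used in the proof of theorem~\ref{prop: L}.
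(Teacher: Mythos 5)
Your argument is correct and follows exactly the route the paper itself indicates: the paper's "proof" of this lemma is just the remark that $\Qf^{\hat{T}}_\bullet$ is a presheaf category (citing \cite[appendix~G]{Lei04}), and your identification $\Qf^{\hat{T}}_\bullet\simeq\Qf/\hat{T}(\bullet)\simeq[(\mathrm{el}\,\hat{T}(\bullet))^{\mathrm{op}},\mathbf{Set}]$ together with Gabriel--Ulmer is precisely the intended elaboration. Your attention to the smallness of $\hat{T}(\bullet)$ is a sensible extra check, but nothing in your write-up departs from the paper's approach.
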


\begin{lemma}\cite[proposition 27.1]{Ke80} \label{lem: kelly}
In the (big) category $\Cf$ of functors between 1-categories, consider the co-span $\As\xrightarrow{\Ag}\Xs\xleftarrow{\Bg}\Bs$ and assume that: 
\begin{itemize}
\item[$\blacktriangleright$] 
$\Xs$ is a locally finitely presentable 1-category, 
\item[$\blacktriangleright$] 
the functors $\Ag$ and $\Bg$ are both finitary and monadic,  
\item[$\blacktriangleright$] 
the span $\As\xleftarrow{\hat{\Ag}}\Ws\xrightarrow{\hat{\Bg}}\Bs$ is a strict pull-back of of the diagram  $\As\xrightarrow{\Ag}\Xs\xleftarrow{\Bg}\Bs$
\begin{equation*}
\xymatrix{
\Ws \ar[rrrr]^{\hat{\Ag}} \ar[d]_{\hat{\Bg}} & & & & \Bs \ar[d]^{\Bg}
\\
\As \ar[rrrr]_{\Ag} & & & & \Xs.
}
\end{equation*}.
\end{itemize}
The functor $\Ws\xrightarrow{\Ag\circ\hat{\Ag}=\Bg\circ\hat{\Bg}}\Xs$ is also monadic. 

\medskip 

As a consequence~\cite[corollary~G.1.2]{Lei04} the category $\Ws$ has an initial object. 
\end{lemma}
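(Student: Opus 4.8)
The plan is to combine Beck's monadicity theorem with the classical description of Eilenberg--Moore categories of finitary monads over a locally finitely presentable base, in the spirit of Kelly's transfinite-constructions paper. First I would make the hypotheses concrete: since $\Ag$ is finitary and monadic, choose a finitary monad $S$ on $\Xs$ together with an equivalence $\As\simeq\Xs^S$ over $\Xs$, under which $\Ag$ becomes the Eilenberg--Moore forgetful functor $\Xs^S\to\Xs$; likewise choose a finitary monad $R$ on $\Xs$ with $\Bs\simeq\Xs^R$ over $\Xs$. An equivalence over $\Xs$ carries a strict pullback over $\Xs$ to an equivalent strict pullback over $\Xs$, so it suffices to treat the case $\As=\Xs^S$, $\Bs=\Xs^R$; there the strict pullback $\Ws=\Xs^S\times_\Xs\Xs^R$ is the category of ``bialgebras'': an object is a triple $(X,a,b)$ with $X\in\Xs$, $a\colon SX\to X$ an $S$-algebra structure and $b\colon RX\to X$ an $R$-algebra structure (with no compatibility between $a$ and $b$), a morphism is a map of $\Xs$ that is simultaneously an $S$- and an $R$-algebra morphism, and the composite $\Ws\to\Xs$ to be studied is $(X,a,b)\mapsto X$.

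Next I would produce a single finitary monad whose category of algebras is $\Ws$. Since $\Xs$ is locally finitely presentable, its category of finitary endofunctors is again locally finitely presentable, and the category of finitary monads on $\Xs$ is monadic over it --- this is where local finite presentability is used essentially, through the existence of free monads on finitary endofunctors; in particular that category of monads is cocomplete, so I may form the coproduct $U$ of $S$ and $R$ in it, with coprojections $S\to U$ and $R\to U$. The contravariant assignment ``monad $\mapsto$ its category of algebras over $\Xs$'' turns colimits of monads into limits of categories-over-$\Xs$, and applied to the coproduct cocone it gives a canonical isomorphism over $\Xs$,
\[
\Xs^{U}\;\cong\;\Xs^{S}\times_{\Xs}\Xs^{R}\;=\;\Ws,
\]
whose content is simply that a $U$-algebra structure on an object $X$ is the same datum as a pair consisting of an $S$-algebra structure and an $R$-algebra structure on $X$: the coprojections give the two structures from a $U$-structure, and the universal property of $U$ recovers the $U$-structure from the pair. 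Under this isomorphism the functor $\Ws\to\Xs$ becomes the forgetful functor $\Xs^U\to\Xs$, which is monadic by construction; hence $\Ws\xrightarrow{\ \Ag\circ\hat{\Ag}\ }\Xs$ is monadic, as claimed.

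For the concluding assertion, a locally finitely presentable category is cocomplete, so $\Xs$ has an initial object $I_\Xs$; a monadic functor admits a left adjoint $\Fg$, and left adjoints preserve all colimits and in particular initial objects, so $\Fg(I_\Xs)$ is an initial object of $\Ws$ --- concretely, under $\Ws\simeq\Xs^U$ it is the free $U$-algebra on $I_\Xs$.

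I expect the main obstacle to be the middle step: establishing that the finitary monads on $\Xs$ form a cocomplete category (so that the coproduct $U$ exists) and that $\Xs^{U}$ genuinely realizes the pullback $\Xs^S\times_\Xs\Xs^R$ over $\Xs$. The first half rests on the theory of free monads over finitary endofunctors of a locally finitely presentable category; a reader preferring a route that does not construct $U$ could instead verify Beck's criterion for $\Ws\to\Xs$ directly, the delicate point then being the construction of a left adjoint to $\Ws\to\Xs$ (via the adjoint functor theorem, applicable because $\Xs$ is locally finitely presentable and $\Ag,\Bg$ are finitary), while the creation of coequalizers of $\Ws$-split pairs follows formally from the fact that $\Ag$ and $\Bg$ create them.
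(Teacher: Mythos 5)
The paper offers no proof of this lemma: it is imported wholesale from \cite{Ke80} (Proposition 27.1) together with \cite[corollary~G.1.2]{Lei04}, so there is no in-paper argument to measure yours against. What you have written is, in substance, the argument of the cited source itself: Kelly's result is precisely the identification of the strict pullback $\Xs^S\times_\Xs\Xs^R$ of two Eilenberg--Moore forgetful functors with the category of algebras $\Xs^{S+R}$ for the coproduct monad, the coproduct existing because finitary monads on a locally finitely presentable category form a cocomplete (indeed locally finitely presentable) category, and $\mathrm{Alg}(-)$ carrying colimits of monads to limits over $\Xs$. Your closing step --- the left adjoint of the monadic composite sends the initial object of the cocomplete category $\Xs$ to an initial object of $\Ws$ --- is also exactly the pattern the authors reuse in their own theorem~\ref{th: L*}. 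So this is a faithful reconstruction of the cited proof rather than a genuinely different route, and apart from the point below it is correct.

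The one step that needs repair is your first reduction: the assertion that ``an equivalence over $\Xs$ carries a strict pullback over $\Xs$ to an equivalent strict pullback over $\Xs$'' is false in general. Strict pullbacks in $\mathbf{CAT}$ are not invariant under replacing a leg by an equivalent functor unless at least one leg is an isofibration, and a merely monadic functor (comparison functor an equivalence rather than an isomorphism) need not be one: taking $\Xs$ to be the indiscrete category on two objects and $\Ag,\Bg$ the two inclusions of a point, both functors are finitary and monadic yet their strict pullback is empty, so the lemma as literally stated is itself slightly too generous. The standard reading --- and the one under which both Kelly's and Leinster's statements are actually applied --- assumes the legs are strictly monadic, or at least amnestic isofibrations; the Eilenberg--Moore forgetful functors, and the concrete forgetful functors such as $\Of^{\hat{T},\kappa}_\bullet\to\Qf^{\hat{T},\kappa}_\bullet\to\Qf^{\hat{T}}_\bullet$ to which the lemma is applied in this paper, have that property, so nothing is lost in the application. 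If you insert that hypothesis (or restrict at the outset to $\As=\Xs^S$, $\Bs=\Xs^R$ on the nose), the remainder of your argument --- cocompleteness of finitary monads, $\Xs^{S+R}\cong\Xs^S\times_\Xs\Xs^R$ over $\Xs$, and monadicity of $\Xs^{S+R}\to\Xs$ --- goes through as written.
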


Theorem~\ref{prop: L} follows applying the previous lemmata in the case of the strict pull-back of forgetful functors: 
\begin{equation*}
\xymatrix{
\Of^{\hat{T},\kappa}_\bullet \ar[rrrr]^{\hat{\Ug}_\kappa} \ar[d]_{\hat{\Ug}_\Of} & & & & \Of^{\hat{T}}_\bullet \ar[d]^{\Ug_\Of}
\\
\Qf^{\hat{T},\kappa}_\bullet \ar[rrrr]_{\Ug_\kappa} & & & & \Qf^{\hat{T}}_\bullet.
}
\end{equation*}

\medskip 

Here is the basic notion of weak $\omega$-category from~\cite[definition~III.9.2.3]{Lei04}. 
\begin{definition}\label{def: w-omega}
A \emph{weak $\omega$-category} is an algebra for any monad $L$, that is an initial object in $\Of^{\hat{T},\kappa}_\bullet$. 
\end{definition}

\subsubsection{Examples} \label{sec: ex-T}

The basic way to provide examples of weak globular $\omega$-categories in Leinster's approach is described in~\cite[Example 9.2.4]{Lei04}: any contracted $\hat{T}$-operad $(P,\kappa,\mu,\eta)\in\Of^{\hat{T},\kappa}_\bullet$ (this means for every choice of a contraction on a monad over $\bullet$ in the bicategory $\Qf_{\hat{T}}$) induces (since $L$ is initial) a unique morphism $L\xrightarrow{!}P$ in $\Of^{\hat{T},\kappa}$ that produces a strict-functor from algebras over $P$ to algebras over $L$ (that are, by definition, weak globular $\omega$-categories). 
Hence an example of weak globular $\omega$-category is obtained as soon as we are given:
\begin{itemize}
\item[$\blacktriangleright$]
$(P,\pi,\kappa,\mu,\eta)$ a contracted $\hat{T}$-operad (that is just a contraction on a monad in $\Qf_{\hat{T}}$), 
\item[$\blacktriangleright$] 
an algebra $X\in \Qf_{\hat{T}}$ for the contracted $\hat{T}$-operad $P$ (that is just an algebra for the previous monad).  
\end{itemize}

\medskip 

Notable examples of weak globular $\omega$-categories in Leinster's approach include:
\begin{itemize}
\item[$\blacktriangleright$] 
strict globular $\omega$-categories (see example~\cite[9.2.5]{Lei04}): these are just algebras for the terminal contracted $\hat{T}$-operad $\hat{T}(\bullet)\in\Of^{\hat{T},\kappa}$, \footnote{
The terminal object in $\Of^{\hat{T},\kappa}_\bullet$ is $\bullet\xleftarrow{!}\hat{T}(\bullet)\xrightarrow{\iota(\hat{T}(\bullet))}\hat{T}(\bullet)$, we will describe in more detail this point for our new monad $\hat{T}^\star$ in remark~\ref{rem: q-cong}. 
} 
\item[$\blacktriangleright$] 
the globular $\omega$-homotopy groupoid $\Pi_\omega(S)$ of a topological space $S$: in~\cite[example 9.2.7]{Lei04} it is shown that, for every topological space $S$, the $\omega$-homotopy groupoid $\Pi_\omega(S)$ is an algebra for a certain contracted-$\hat{T}$-operad. 
\end{itemize}

\section{Involutive Leinster Weak Globular $\omega$-categories} \label{sec: invo}

In the present section we proceed to study how to introduce weak involutions on a weak globular $\omega$-category. 

\medskip 

We start in subsection~\ref{sec: st-inv-omega} from involutions on strict globular $\omega$-categories, as defined in our previous paper~\cite{BeBe17} that extended to the $\omega$-category case the notion of fully involutive strict globular $n$-category introduced in~\cite[section~4]{BCLS20}. 

\medskip 

Subsection~\ref{sec: w-inv-omega} contains our tentative definition of involutive weak globular $\omega$-category based on an initial generalized operad $L^\star$ in the bicategory $\Qf_{\hat{T}^\star}$ induced by the free involutive globular omega category monad $\hat{T}^\star$ over the same Cartesian bicategory $\Qf$ of globular $\omega$-sets. 

\subsection{Involutive Strict Globular $\omega$-categories} \label{sec: st-inv-omega}

We quickly recall the basic definitions and contructions of free strict involutive globular $\omega$-categories, closely following our previous paper~\cite{BeBe17}. 
\begin{definition}
A (reflexive) $\omega$-quiver $Q:=Q^0 \overset{s^0}{\underset{t^0}{\leftleftarrows}} Q^1 \overset{s^1}{\underset{t^1}{\leftleftarrows}} \cdots
\overset{s^{n-2}}{\underset{t^{n-2}}{\leftleftarrows}} Q^{n-1}\overset{s^{n-1}}{\underset{t^{n-1}}{\leftleftarrows}} Q^n \overset{s^n}{\underset{t^n}{\leftleftarrows}} \cdots$ is \emph{fully self-dual} if it is equipped with a family $(\gamma_q)_{q\in\NN}$ endomorphisms $Q\xrightarrow{\gamma_q}Q$, for all $q\in\NN$, such that: 
\begin{itemize}
\item[$\blacktriangleright$] 
for all $q\in \NN$, $\gamma_q$ is $q$-contravariant: \quad 
$\gamma^q_q \circ s^q=t^q\circ \gamma_q^{q+1}$,  \quad 
$\gamma^q_q \circ t^q=s^q\circ \gamma_q^{q+1}$,
\item[$\blacktriangleright$]
for all $p,q\in\NN$ such that $p\neq q$, $\gamma_q$ is $p$-covariant: 
\quad 
$\gamma^p_q \circ s^p=s^p\circ \gamma_q^{p+1}$,  \quad 
$\gamma^p_q \circ t^p=t^p\circ \gamma_q^{p+1}$. 
\end{itemize} 
A (reflexive) globular $\omega$-set (respectively a (reflexive) globular $\omega$-magma) is said to be self-dual whenever its underlying $\omega$-quiver is self-dual in the previous sense.\footnote{Here we interpret $\circ, \iota, \gamma$ as partially defined binary, nullary, unary operations, and we do not impose any algebraic axiom for magmas.} 

\medskip 

A strict globular $\omega$-category $(\Cs,\circ,\iota)$ that is also a fully self-dual $\omega$-quiver $(\Cs,*)$ is a \emph{fully involutive strict globular $\omega$-category} if its family $(*_q)_{q\in\NN}$ of self-duality endomorphisms (here denoted by $x\mapsto x^{*^k_q}$, for $x\in\Cs^k$ and $k\in\NN$) satisfies the following algebraic axioms:
\begin{itemize}
\item[$\blacktriangleright$] 
(\emph{$q$-contravariance}): \quad 
$(x\circ^k_qy)^{*^k_q}=y^{*^k_q}\circ^k_q x^{*^k_q}$, \ $\forall k,q\in\NN$ such that $0\leq q<k$ and $(x,y)\in\Cs^k\times_{\Cs^q}\Cs^k$, 
\\
\phantom{(\emph{$q$-contravariance}):} \quad 
$(x\circ^k_qy)^{*^k_p}=x^{*^k_p}\circ^k_q y^{*^k_p}$, \ $\forall k,p,q\in\NN$ such that $0\leq q\neq p <k$ and $(x,y)\in\Cs^k\times_{\Cs^q}\Cs^k$, 
\item[$\blacktriangleright$] 
(\emph{unitality}): \quad $(\iota^k(x))^{*^{k+1}_q}=\iota^k(x^{*^k_q})$, \quad 
for all $k,q\in\NN$ and for all $x\in\Cs^k$, 
\item[$\blacktriangleright$]
(\emph{involutivity}): \quad $(x^{*^k_q})^{*^k_q}=x$, \quad for all $k,q\in\NN$, and all $x\in\Cs^k$, 
\item[$\blacktriangleright$] 
(\emph{commutativity}): \quad $(x^{*^k_q})^{*^k_p}=(x^{*^k_p})^{*^k_q}$, \quad 
for all $k,p,q\in\NN$, and all $x\in\Cs^k$, 
\item[$\blacktriangleright$]
(\emph{$q$-grounding}): \quad $x^{*^k_q}=x$, \quad for all $k,q\in\NN$ such that $k\leq q$ and all $x\in\Cs^k$. 
\end{itemize}

A \emph{self-dual morphism} between self-dual $\omega$-quivers (and similarly between self-dual (reflexive) globular $\omega$-sets or self-dual (reflexive) globular $\omega$-magmas) is a morphism $(Q,\gamma)\xrightarrow{\phi}(\hat{Q},\hat{\gamma})$ such that $\hat{\gamma}\circ\phi=\phi\circ\gamma$. 

\medskip 

An \emph{involutive $\omega$-functor} $\Cs\xrightarrow{\phi}\hat{\Cs}$ between fully involutive strict globular $\omega$-categories is just a self-dual morphism for the underlying self-dual globular $\omega$-magmas. 
\end{definition}

\medskip 

This involutive version of proposition~\ref{prop: free-omega-cat} was proved in~\cite[propositions 3.1, 3.2]{BeBe17}.
\begin{proposition}\label{prop: free-inv-omega-cat}
Let $\Cf^\star$ denote the strict 1-category of covariant involutive $\omega$-functors between strict globular fully involutive $\omega$-categories.  

\medskip 

For any globular $\omega$-set $Q$ in $\Qf$, a \emph{free strict involutive globular $\omega$-category over $Q$} is a morphism of globular $\omega$-sets $Q\xrightarrow{\eta^\star_Q}\Ug^\star(\Cs)$, into the underlying globular $\omega$-set $\Ug^\star(\Cs)$ of a strict involutive globular $\omega$-category $\Cs$, satisfying the following universal factorization property: 
for any morphism of globular $\omega$-sets $Q\xrightarrow{\phi}\Ug^\star(\hat{\Cs})$, into the underlying globular $\omega$-set $\Ug^\star(\hat{\Cs})$ of a strict involutive globular $\omega$-category $\hat{\Cs}$, there exists a unique involutive $\omega$-functor $\Cs\xrightarrow{\hat{\phi}}\hat{\Cs}$ of fully involutive strict globular $\omega$-categories such that $\phi=\hat{\phi}\circ\eta^\star_Q$. 

\medskip 

The \emph{forgeful functor} $\Cf^\star\xrightarrow{\Ug^\star}\Qf$ (forgetting compositions, identities and involutions of objects in $\Cf^\star$) admits a left-adjoint $\Fg^\star\dashv \Ug^\star$ \emph{free strict involutive globular $\omega$-category functor} $\Cs^\star\xleftarrow{\Fg^\star}\Qf$ that is uniquely determined via a specific construction of free involutive strict globular $\omega$-category $Q\xrightarrow{\eta^\star_Q}\Ug^\star(\Cs)$ over a globular $\omega$-set $Q$. 
\end{proposition}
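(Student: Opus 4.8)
The plan is to reproduce, in the involutive setting, the explicit construction of the free strict globular $\omega$-category recalled in proposition~\ref{prop: free-omega-cat}: one adjoins to the given globular $\omega$-set $Q$ all formal composites, formal identities \emph{and} formal duals, and then passes to the quotient by the congruence generated by the axioms of a fully involutive strict globular $\omega$-category. This is the standard ``term algebra modulo equations'' route, the one used in~\cite[propositions~3.1, 3.2]{BeBe17}, adapted here to the new unary operations $x\mapsto x^{*^k_q}$.

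First I would build the \emph{term model} $\Tf(Q)$ by recursion on $\NN$: let $\Tf(Q)^k$ consist of the well-formed dimension-$k$ terms, where a term is a generator $x\in Q^k$, or $\iota^{k-1}(u)$ for a $(k-1)$-term $u$, or $u^{*^k_q}$ for a $k$-term $u$ and $q\in\NN$, or $u'\circ^k_p u$ for $k$-terms $u',u$ and $0\le p<k$ whose already computed iterated sources and targets make the pair composable. The source and target maps $s^{k-1},t^{k-1}\colon\Tf(Q)^k\to\Tf(Q)^{k-1}$ are defined simultaneously with the recursion so that the globularity conditions hold by construction, using the prescribed boundary behaviour of $\circ^k_p$, of $\iota^{k-1}$ (a common section of $s^{k-1}$ and $t^{k-1}$), and of $*^k_q$ ($q$-contravariant, $p$-covariant for $p\ne q$). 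Every term is finite, so this recursion is unproblematic, and $\Tf(Q)$ is the absolutely free reflexive self-dual globular $\omega$-magma on $Q$, equipped with the obvious inclusion $\eta\colon Q\hookrightarrow\Ug^\star(\Tf(Q))$.

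Next I would let $\approx$ be the smallest family of equivalence relations $\approx^k$ on $\Tf(Q)^k$ that is a congruence of reflexive self-dual globular $\omega$-magmas (compatible with $s,t,\iota,*$ and every $\circ^k_p$) and that identifies the two sides of each instance of the strict axioms (associativity, unitality, functoriality of identities, binary exchange) and of the involutive axioms ($q$-contravariance, unitality of the involution, involutivity, commutativity, $q$-grounding). Since every such axiom relates terms of equal dimension with equal source and equal target, the quotient $\Fg^\star(Q):=\Tf(Q)/\!\approx$ inherits well-defined $s,t,\iota,*$ and compositions and, by construction, satisfies all listed axioms; thus $\Fg^\star(Q)$ is a fully involutive strict globular $\omega$-category and $\eta^\star_Q$ is $\eta$ followed by the quotient map. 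For the universal factorization property, given a fully involutive $\hat{\Cs}$ and a morphism of globular $\omega$-sets $\phi\colon Q\to\Ug^\star(\hat{\Cs})$, define $\tilde\phi\colon\Tf(Q)\to\hat{\Cs}$ by structural recursion (generators go to their $\phi$-images, formal operations to the genuine operations of $\hat{\Cs}$); compatibility with $s,t$ is immediate from the recursive definition of the boundaries of terms, so $\tilde\phi$ is a morphism of reflexive self-dual globular $\omega$-magmas, and since $\hat{\Cs}$ validates every defining equation, $\tilde\phi$ is constant on $\approx$-classes and factors through a unique involutive $\omega$-functor $\hat\phi\colon\Fg^\star(Q)\to\hat{\Cs}$ with $\phi=\Ug^\star(\hat\phi)\circ\eta^\star_Q$. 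Uniqueness holds because $\hat\phi$ is forced on the images of the generators and, being a morphism of involutive globular $\omega$-magmas, on all of $\Fg^\star(Q)$, which is generated by $Q$ under the operations. Functoriality of $\Fg^\star$, naturality of $\eta^\star$, and the adjunction $\Fg^\star\dashv\Ug^\star$ (with $\Ug^\star$ the forgetful functor) then follow formally from this universal property.

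The step that deserves the most care is the verification that $\approx$ is compatible with the source and target maps, i.e. that imposing the equations does not collapse dimensions or merge cells with different boundaries, so that $s^{k-1},t^{k-1}$ genuinely descend to $\Fg^\star(Q)$. This reduces to the routine but lengthy check that each defining axiom equates terms with identical iterated source and target; in the involutive case this relies crucially on the $q$-contravariance/$p$-covariance prescription for $*^k_q$ and on the $q$-grounding axiom being boundary-preserving. A secondary point to watch is that the partiality of $\circ^k_p$ is respected throughout: one only ever forms $u'\circ^k_p u$ on composable pairs, and each axiom is imposed only where all composites it mentions are defined.
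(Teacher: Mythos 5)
Your proposal is correct and follows essentially the same route as the paper: the paper likewise first builds the free reflexive self-dual globular $\omega$-magma $\Mg(Q)$ (your term model, constructed there level-by-level with freely adjoined identities, $\gamma_q$-duals and $\circ^n_p$-composites), then quotients by the smallest self-dual reflexive globular $\omega$-magma congruence $\Xi$ generated by the strict and involutive axioms, and finally verifies the universal factorization property by factoring the induced magma morphism through the kernel congruence. The only cosmetic difference is that the paper packages your ``compatibility of $\approx$ with $s,t$'' concern into the very definition of congruence (a sub-magma of $\Mg(Q)\times\Mg(Q)$ whose inclusion is a morphism), so that sources and targets descend automatically to the quotient.
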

\begin{proof}
For convenience of the reader, we recall an explicit construction of the free strict globular involutive  $\omega$-category $Q\xrightarrow{\eta^\star_Q}\Fg^\star(Q)$ over a given globular $\omega$-set $Q$. 
\begin{itemize}
\item[a.] 
We first produce the free globular self-dual relfexive $\omega$-magma $Q\xrightarrow{\zeta_Q}\Mg(Q)$, over the globular $\omega$-set $Q$, with respect to all the partial binary compositions, the unary involutions and the nullary identities involved in the definition of strict globular involutive $\omega$-category. 
\item[b.]
Then we consider in $\Mg(Q)$ the smallest congruence relation of globular self-dual reflexive $\omega$-magmas $\Xi\subset \Mg(Q)\times \Mg(Q)$ containing all the pairs of terms appearing into the algebraic axioms that are involved in the definition of strict globular involutive $\omega$-category. 
\item[c.]
Finally, considering the quotient morphism $\Mg(Q)\xrightarrow{\varpi}\Mg(Q)/\Xi$ of globular self-dual reflexive $\omega$-magmas and the map of globular $\omega$-sets $Q\xrightarrow{\varpi\circ\zeta_Q}\Mg(Q)/\Xi$, we notice that $\Fg^\star(Q):=\Mg(Q)/\Xi$ is a strict involutive globular \hbox{$\omega$-category} and that $\eta^\star_Q:=\varpi\circ\zeta_Q$ satisfies the universal factorization property for free involutive globular $\omega$-categories over $Q$. 
\end{itemize}

\medskip 

a. 

The free globular self-dual reflexive $\omega$-magma $Q\xrightarrow{\zeta_Q}\Mg(Q)$ over the globular $\omega$-set $Q$ is obtained by a recursive definition. 
We first provide a 1-quiver $\Mg(Q)^0\leftleftarrows\Mg(Q)^1$. 

\medskip 

Starting from $Q^0$, we construct $\Mg(Q)^0:=Q^0$. 
We then introduce $Q^0_\iota:=\{(x,\iota^0) \ | \ x\in Q^0\}$ (that is a disjoint copy of $Q^0$ representing freely added identities of elements in $Q^0$) and $\Mg(Q)^1[1]^0:=Q^1\uplus Q^0_\iota$ with source/target given by $s^0_{\Mg(Q)}(x):=s^0_Q(x)$, $t^0_{\Mg(Q)}(x):=t^0_Q(x)$, for all $x\in Q^1$ and $s^0_{\Mg(Q)}(x,\iota):=x=:t^0_{\Mg(Q)}(x,\iota)$, for all $x\in Q^0$. 
If $\Mg(Q)^1[1]^j$ has been already constructed, we define $\Mg(Q)^1[1]^{j+1}:=\{(x,\gamma_0) \ | \ x\in\Mg(Q)^1[1]^j\}$ (that introduces freely added self-dualities of elements in $\Mg(Q)^1[1]^j$) with source $s^0_{\Mg(Q)}(x,\gamma_0):=t^0_{\Mg(Q)}(x)$ and target $t^0_{\Mg(Q)}(x,\gamma_0):=s^0_{\Mg(Q)}(x)$; defining $\Mg(Q)^1[1]:=\bigcup_{j=1}^{+\infty}\Mg^1(Q)[1]^j$, we get a 1-quiver $\Mg(Q)^0\leftleftarrows \Mg(Q)^1[1]$. 

We proceed to define $\Mg(Q)^1[2]^0:=\Big\{(x,0,y) \ | \ (x,y)\in\Mg(Q)^1[1]\times_{\Mg^0(Q)}\Mg(Q)^1[1] \Big\}$ (whose elements represent freely added compositions) with sources and targets 
$s_{\Mg(Q)}^0(x,0,y):=s_{\Mg(Q)}^0(y)$ and $t_{\Mg(Q)}^0(x,0,y):=t_{\Mg(Q)}^0(x)$.
Exactly as done before, we define $\Mg(Q)^1[2]:=\bigcup_{j=1}^{+\infty}\Mg(Q)^1[2]^j$, that freely introduces arbitrary iterations of $\gamma_0$-self-dualities of the 1-cells in $\Mg(Q)^1[2]^0$, obtaining a new 1-quiver $\Mg(Q)^0\leftleftarrows\Mg(Q)^1[1]\cup\Mg(Q)^1[2]$. 

Supposing, by recursion, that the 1-quiver $\Mg(Q)^0\leftleftarrows\bigcup_{n=1}^k\Mg(Q)^1[n]$ is already defined, we extend it freely adding compositions $\Mg(Q)^1[k+1]^0:=\{(x,0,y) \ | \ (x,y)\in\Mg(Q)^1[i]\times_{\Mg(Q)^0}\Mg(Q)^1[j], \ i+j=k+1 \}$, with sources/targets $s_{\Mg(Q)}^0(x,0,y):=s_{\Mg(Q)}^0(y)$, $t_{\Mg(Q)}^0(x,0,y):=t_{\Mg(Q)}^0(x)$, and freely adding arbitrary iterations of $\gamma_0$-self-dualities to get $\Mg(Q)^1[k+1]:=\bigcup_{j=1}^{+\infty}\Mg(Q)^1[k]^j$ and the 1-quiver $\Mg(Q)^0\leftleftarrows\Mg(Q)^1$, where we have $\Mg(Q)^1:=\bigcup_{k=1}^{+\infty}\Mg(Q)^1[k]$. 

\medskip 

The previous 1-quiver, becomes reflexive defining $\iota^0:\Mg(Q)^0\to\Mg(Q^1)$ by $x\mapsto (x,\iota^0)$; it further becomes self-dual defining $\gamma_0:\Mg(Q)^1\to\Mg(Q)^1$ by $x\mapsto (x,\gamma_0)$; a binary composition $\circ^1_0:\Mg(Q)^1\times_{\Mg(Q)^0}\Mg(Q^1)\to\Mg(Q)^1$ is also present defining  $\circ^1_0:(x,y)\mapsto (x,0,y)$ and hence we have a reflexive self-dual (globular) 1-magma.  

\medskip 

Supposing by recursion that a reflexive self-dual globular magma for the $n$-quiver $\Mg(Q)^0\leftleftarrows\cdots\leftleftarrows\Mg(Q)^n$ has been already defined, we will obtain a reflexive self-dual globular magma $\Mg(Q)^0\leftleftarrows\cdots\leftleftarrows\Mg(Q)^n\leftleftarrows\Mg(Q)^{n+1}$. 

\medskip 

We start with  $\Mg(Q)^{n+1}[0]^0:=Q^{n+1}\cup\Mg(Q)^n_\iota$, where $\Mg(Q)^n_\iota:=\{ (x,\iota^n) \ | \ x\in\Mg(Q)^n\}$ with sources and targets $s_{\Mg(Q)}(x,\iota^n):=x=:t_{\Mg(Q)}(x,\iota^n)$; we then introduce $\Mg(Q)^{n+1}[0]^{j+1}:=\{(x,\gamma_q) \ | \ x\in\Mg(Q)^{n+1}[0]^j, \ q=0,\dots,n\}$, with sources/targets $s_{\Mg(Q)}/t_{\Mg(Q)}(x,\gamma_q)=s_{\Mg(Q)}/t_{\Mg(Q)}(x)$, if $q< n$, and, for $q=n$,  $s_{\Mg(Q)}(x,\gamma_n)=t_{\Mg(Q)}(x)$, $t_{\Mg(Q)}(x,\gamma_n)=s_{\Mg(Q)}(x)$; and we get $\Mg(Q)^{n+1}[0]:=\bigcup_{j=1}^{+\infty}\Mg(Q)^{n+1}[0]^j$. 
Next, assuming for recursion that $\Mg(Q)^{n+1}[k]$ (with its source and target maps) have been already defined, we introduce free depth-$p$ compositions $\Mg(Q)^{n+1}[k+1]^0:=\Big\{(x,p,y) \ | \ p=0,\dots,n, \ (x,y)\in\Mg(Q)^{n+1}[i]\times_{\Mg(Q)^p}\Mg(Q)^{n+1}[j], \ i+j=k+1 \Big\}$, with source targets 
$s_{\Mg(Q)}(x,p,y):=(s_{\Mg(Q)}(x),p,s_{\Mg(Q)}(y))$ and $t_{\Mg(Q)}(x,p,y):=(t_{\Mg(Q)}(x),p,t_{\Mg(Q)}(y))$, whenever $0\leq p<n$, and otherwise $s_{\Mg(Q)}(x,n,y):=s_{\Mg(Q)}(y)$ and $t_{\Mg(Q)}(x,p,y):=t_{\Mg(Q)}(x)$. 
Subsequently we introduce free iterated self-duals of the previous $(n+1)$-cells by  $\Mg(Q)^{n+1}[k+1]:=\bigcup_{j=0}^{+\infty}\Mg(Q)^{n+1}[k+1]^j$ where, as above, we consider   $\Mg(Q)^{n+1}[k=1]^{j+1}:=\Big\{(x,\gamma_q) \ | \ q=0,\dots,n, \ x\in\Mg(Q)^{n+1}[k+1]^{j} \Big\}$ with similarly defined source/target maps. 
Finally we set $\Mg(Q)^{n+1}:=\bigcup_{k=0}^{+\infty}\Mg(Q)^{n+1}[k]$ obtaining the required globular $(n+1)$-quiver that is reflexive, with the map $\iota^n(x):=(x,\iota^n)$, for $x\in\Mg(Q)^n$; self-dual with the maps $x\mapsto (x,\gamma_q)$, for all $x\in\Mg(Q)^{n+1}$ and all $q\in\{0,\dots,n\}$; and a magma for the partial compositions $x\circ^{n+1}_py:=(x,p,y)$, for all $(x,y)\in\Mg(Q)^{n+1}\times_{\Mg(Q)^p}\Mg(Q)^{n+1}$ and all $p\in\{0,\dots,n\}$. 
 
\medskip 

The recursive construction of the globular reflexive self-dual $\omega$-magma $\Mg(Q)^0\leftleftarrows\cdots\leftleftarrows\Mg(Q)^n\leftleftarrows\cdots$ is now completed and we further produce the morphism $Q\xrightarrow{\eta_Q}\Mg(Q)$ of globular $\omega$-sets by the inclusion of $Q^n$ into $\Mg(Q)^n[0]^0\subset \Mg(Q)^n$, for all $n\in\NN$. 

\medskip 

We only need to check the universal factorization property for the morphism $Q\xrightarrow{\eta_Q}\Mg(Q)$. 
Suppose that $Q\xrightarrow{\phi}\hat{M}$ is a morphism of reflexive self-dual globular $\omega$-sets into a reflexive self-dual globular $\omega$-magma. Any grade-preserving map $\Mg(Q)\xrightarrow{\hat{\phi}}\hat{M}$ such that $\phi=\hat{\phi}\circ\eta_Q$ must necessarily satisfy $x\mapsto \phi(x)$, for all $x\in Q^n\subset\Mg(Q)^n$. Using the fact the $\hat{\phi}$ must be a morphism of reflexive self-dual globular $\omega$-magmas, by induction, we obtain that, for all $n\in\NN$,  $\hat{\phi}(x,\iota^n)=\hat{\iota}_{\hat{M}}^n(\phi(x))$, for all $x\in \Mg(Q)^{n}$, $\hat{\phi}(x,p,y)=\phi(x)\hat{\circ}^n_p\phi(y)$, for all $(x,y)\in\Mg(Q)^n\times_{\Mg(Q)^p}\Mg(Q)^n$ and $0\leq p<n$, and $\hat{\phi}(x,\gamma_q)=\phi(x)^{*^n_q}$, for all $x\in\Mg(Q)^n$ and $0\leq q<n$. 
This uniquely defined map $\Mg(Q)\xrightarrow{\hat{\phi}}\hat{M}$ is a morphism of relfexive self-dual globular $\omega$-magmas as required.

\bigskip 

b. 

From~\cite[section 3.2]{BeBe17} we recall that, given globular $\omega$-sets $(Q,s_Q,t_Q)$, $(\hat{Q},s_{\hat{Q}},t_{\hat{Q}})$ the \textit{Cartesian product of globular $\omega$-sets} is the globular $\omega$-set $(Q\times \hat{Q},s_{Q\times\hat{Q}},t_{Q\times\hat{Q}})$ defined, for all $n\in\NN$, as $(Q\times \hat{Q})^n:=Q^n\times \hat{Q}^n$, with source/targets $s^n_{Q\times\hat{Q}}:=(s^n_Q,s^n_{\hat{Q}})$ and $t^n_{Q\times\hat{Q}}:=(t^n_Q,t^n_{\hat{Q}})$ acting componentwise; and that whenever $(Q,s_Q,t_Q,\iota_Q,\gamma_Q,\circ_Q)$, $(\hat{Q},s_{\hat{Q}},t_{\hat{Q}},\iota_{\hat{Q}},\gamma_{\hat{Q}},\circ_{\hat{Q}})$ are globular self-dual reflexive $\omega$-magmas, also their Cartesian product $Q\times\hat{Q}$ is a globular self-dual reflexive $\omega$-magma with the componentwise defined nullary $\iota^{n}_{Q\times\hat{Q}}:=(\iota^n_Q,\iota^n_{\hat{Q}})$, unary $\gamma^{(Q\times\hat{Q})^n}_q:=(\gamma^{Q^n}_q,\gamma_q^{\hat{Q}^n})$ and binary 
$\circ_p^{(Q\times\hat{Q})^n}:=(\circ_p^{Q^n},\circ_p^{\hat{Q}^n})$ operations. 

\medskip

We also recall that a \textit{congruence} $X$ in a globular self-dual reflexive $\omega$-magma $M$ is a globular self-dual reflexive $\omega$-magma $X$ such that $X^n\subset M^n\times M^n$, for all $n\in\NN$, in such a way that the inclusion $X\xrightarrow{\phi}M\times M$ is a morphism of globular self-dual reflexive $\omega$-magmas. 

\medskip 

Inside the free globular self-dual reflexive $\omega$-magma $\Mg(Q)$ over $Q$ constructed in a.~above we consider the congruence $\Xi\subset\Mg(Q)\times\Mg(Q)$ generated \footnote{
Since intersection of congruences is a congruence, $\Xi$ is just the intersection of all the conguences containing the given pairs.
} 
by the union of all of the following families of pairs: 
\begin{align*}
&\Big\{(x\circ^n_p(y\circ^n_pz),\ (x\circ^n_py)\circ^n_p z) 
\quad \big| \quad   
n>p\in\NN, \ (x,y,z) \in \Mg(Q)^n\times_{\Mg(Q)^p}\Mg(Q)^n\times_{\Mg(Q)^p}\Mg(Q)^n \Big\} 
\\
&\Big\{((\iota^{n-1}\circ\cdots\iota^p\circ t^p\circ\cdots\circ t^{n-1} (x))\circ^n_p x,\ x) 
\quad \big| \quad 
n>p \in \NN, \  x\in \Mg(Q)^n \Big\}  
\\ 
&\Big\{(x,\ x\circ^n_p (\iota^{n-1}\circ\cdots\iota^p\circ s^p\circ\cdots\circ s^{n-1} (x))) 
\quad \big| \quad 
n>p \in \NN,\  x\in \Mg(Q)^n \Big\} 
\\ 
&\Big\{(\iota^n(x)\circ^{n+1}_p\iota^n(y),\ \iota^n(x\circ^n_p y)) 
\quad \big| \quad 
n>p\in \NN, \ (x,y)\in \Mg(Q)^n\times_{\Mg(Q)^q}\Mg(Q)^n \Big\}
\\ 
&\begin{aligned}
\Big\{((x\circ^n_p y)\circ^n_q(z\circ^n_p w),\ \  &(x\circ^n_q z)\circ^n_p(y\circ^n_q w)) 
\quad \big| \quad 
n>p,q\in\NN, \ 
\\
&(x,y),(z,w)\in \Mg(Q)^n\times_{\Mg(Q)^p}\Mg(Q)^n, \ (x,z),(y,w)\in \Mg(Q)^n\times_{\Mg(Q)^q}\Mg(Q)^n\Big\} 
\end{aligned}
\\ 
&\Big\{(\gamma^n_q(\gamma^n_q(x)),\ x)
\quad \big| \quad  
n>q\in\NN, \ x\in \Mg(Q)^n 
\Big\} 
\quad \quad 
\Big\{
(\gamma_q^n(\gamma_p^n(x)), \ \gamma_p^n(\gamma_q^n(x))) 
\quad \big| \quad 
n>q,p\in\NN, \ x\in \Mg(Q)^n \Big\} 
\\ 
&\Big\{
(\gamma^n_q(x\circ^n_p y), \ \gamma^n_q(x)\circ^n_p \gamma^n_q(y) )
\quad \big| \quad 
n>p\neq q\in\NN, \ (x,y)\in \Mg(Q)^n\times_{\Mg(Q)^p}\Mg(Q)^n \Big\} 
\\ 
&\Big\{
(\gamma^n_q(x\circ^n_p y), \ \gamma^n_q(y)\circ^n_p \gamma^n_q(x)) 
\quad \big| \quad 
n>p=q\in\NN, \ (x,y)\in \Mg(Q)^n\times_{\Mg(Q)^p}\Mg(Q)^n \Big\} 
\\ 
&\Big\{
(\iota^{n}(\gamma^n_q(x)), \ \gamma^{n+1}_q(\iota^n_q(x))) 
\quad \big| \quad
n>q\in\NN, \  
x\in \Mg^n(Q) \Big\} 
\quad \quad 
\Big\{
(\gamma^n_q(x), \ x) 
\quad \big| \quad 
n\leq q\in\NN, \ x\in \Mg(Q)^n 
\Big\}. 
\end{align*}

\medskip 

c. 

As every \textit{quotient of a globular self-dual relfexive $\omega$-magma by a globular $\omega$-congruence}, the quotient $\Mg(Q)/\Xi$ is a globular $\omega$-set with $(\Mg(Q)/\Xi)^n:=\Mg(Q)^n/\Xi^n=\Big\{[x]_{\Xi^n} \ | \ x\in\Mg(Q)^n \Big\}$ and $s^n_{\Mg(Q)/\Xi}([x]_{\Xi^{n+1}}):=[s^n_{\Mg(Q)}(x)]_{\Xi^{n}}$,  
$t^n_{\Mg(Q)/\Xi}([x]_{\Xi^{n+1}}):=[t^n_{\Mg(Q)}(x)]_{\Xi^{n}}$, for $n\in\NN$ and $x\in\Mg(Q)^{n+1}$; and $\Mg(Q)/\Xi$ is actually a globular self-dual reflexive $\omega$-magma with well-defined compositions given by $[x]_{\Xi^n}\circ^{(\Mg(Q)/\Xi)^n}_p[y]_{\Xi^n}:=[x\circ^{\Mg(Q)^n}_p y]_{\Xi^n}$, 
self-dualties $\gamma_q([x]_{\Xi^n}):=[\gamma_q(x)]_{\Xi^n}$, and reflexive maps 
$\iota^n_{\Mg(Q)/\Xi}([x]_{\Xi^n}):=[\iota^n_{\Mg(Q)}(x)]_{\Xi^{n+1}}$, 
for $n>p\in\NN$, $q\in\NN$, $x,y\in\Mg(Q)^n$. 
Funrthermore, since all the algebraic axioms for strict globular involutive $\omega$-category are already included in $\Xi$, we have that the quotient $\Mg(Q)/\Xi$ is actually a strict globular involutive $\omega$-category and the quotient morphism $\varpi:\Mg(Q)\to\Mg(Q)/\Xi$, defined as $\varpi^n(x):=[x]_{\Xi^n}$, for all $n\in\NN$ and $x\in \Mg(Q)^n$, is a morphism of globular self-dual reflexive $\omega$-sets. 
 
\medskip

We only need to check that the morphism of globular $\omega$-sets $Q\xrightarrow{\eta^\star_Q}\Mg(Q)/\Xi$, with $\eta^\star_Q:=\varpi\circ\zeta_Q$, into the globular involutive $\omega$-category $\Mg(Q)/\Xi$,  satisfies the universal factorization property. 
For any other morphism $Q\xrightarrow{\phi}\Cs$ of globular self-dual reflexive $\omega$-sets, into a strict globular involutive $\omega$-category $\Cs$, since $Q\xrightarrow{\zeta_Q}\Mg(Q)$ is a free globular self-dual reflexive $\omega$-magma, there exists a unique morphism $\Mg(Q)\xrightarrow{\cj{\phi}}\Cs$ of globular self-dual reflexive $\omega$-magmas such that $\phi=\cj{\phi}\circ\zeta_Q$. 
Consider now the \textit{kernel globular $\omega$-congruence} $\Xi_{\cj{\phi}}$ induced by the morphism $\cj{\phi}$ of $\omega$-magmas: for all $n\in\NN$, we have $\Xi_{\cj{\phi}}^n:=\Big\{(x,y)\in\Mg(Q)^n\times\Mg(Q)^n \ | \  \cj{\phi}(x)=\cj{\phi}(y)\Big\}$ and, since all the axioms of strict globular involutive $\omega$-category are already satisfied in $\Cs$, we have $\Xi\subset\Xi_{\cj{\phi}}$ and hence $\Mg(Q)/\Xi_{\cj{\phi}}$ is already a strict globular involutive $\omega$-category, furthermore the assignment $\tilde{\phi}:[x]_{\Xi_{\cj{\phi}}}\mapsto \cj{\phi}(x)$ is a well-defined covariant involutive $\omega$-functor $\Mg(Q)/\Xi_{\cj{\phi}}\xrightarrow{\tilde{\phi}}\Cs$, that is actually the unique such that $\cj{\phi}=\tilde{\phi}\circ\varpi_\phi$, where $\Mg(Q)\xrightarrow{\varpi_\phi}\Mg(Q)/\Xi_{\cj{\phi}}$ is the quotient morphism of globular self-dual reflexive $\omega$-magmas defined as usual by $\varpi_\phi:x\mapsto [x]_{\Xi_{\cj{\phi}}}$, for $x\in\Mg(Q)$. From $\Xi\subset\Xi_{\cj{\phi}}$, we obtain a unique involutive $\omega$-functor $\Mg(Q)/\Xi_{\cj{\phi}}\xrightarrow{\theta}\Mg(Q)/\Xi$, $\theta:[x]_\Xi\mapsto[x]_{\Xi_{\cj{\phi}}}$, for $x\in\Mg(Q)$, such that $\varpi_\phi\circ\varpi=\theta$. Defining $\hat{\phi}:=\tilde{\phi}\circ\theta$, we have that $\Mg(Q)/\Xi\xrightarrow{\hat{\phi}}\Cs$ is the unique involutive $\omega$-functor such that $\hat{\phi}\circ\eta^\star_Q=\tilde{\phi}\circ\theta\circ\varpi\circ\zeta_Q=\tilde{\phi}\circ\varpi_\phi \circ\eta_Q=\cj{\phi}\circ\eta_Q=\phi$. 
\end{proof}

\begin{remark}
The existence of algebras (with a given signature) that are free over a set is known; \footnote{
See for example section~6 in the n-Lab entry: \href{https://ncatlab.org/nlab/show/variety+of+algebras\#literature}{https://ncatlab.org/nlab/show/variety+of+algebras\#literature}. 
} 
propositions~\ref{prop: free-omega-cat} and~\ref{prop: free-inv-omega-cat} are essentially special cases of a general existence theorem for \textit{``$\omega$-algebras'' that are free over a globular $\omega$-set} that (for the $\omega$-globular setting) is vertically categorifying the case of algebras over sets. 
\xqed{\lrcorner}
\end{remark}

In paralled with corollary~\ref{cor: free-monads}, from proposition~\ref{prop: free-inv-omega-cat} we obtain a new monad in the involutive $\omega$-category case.
\begin{corollary}\label{cor: free-inv-monads}
On the 1-category $\Qf$ of morphisms of globular $\omega$-sets, we have the following  
\begin{itemize}
\item[$\blacktriangleright$]
\emph{free involutive strict globular $\omega$-category monad} $\hat{T}^\star:=\Ug^\star\circ\Fg^\star$.  
\end{itemize}
\end{corollary}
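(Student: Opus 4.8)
The plan is to invoke the standard construction recalled in Remark~\ref{remark: monadic}, namely that every adjunction induces a monad on its domain category. First I would recall from Proposition~\ref{prop: free-inv-omega-cat} that the free involutive strict globular $\omega$-category functor $\Qf\xleftarrow{\Fg^\star}\Cf^\star$ is left adjoint, $\Fg^\star\dashv\Ug^\star$, to the forgetful functor $\Cf^\star\xrightarrow{\Ug^\star}\Qf$, the adjunction unit being the family $(\eta^\star_Q)_{Q\in\Qf^0}$ of morphisms $Q\xrightarrow{\eta^\star_Q}\Ug^\star(\Fg^\star(Q))$ of globular $\omega$-sets produced in the proof of that proposition; I would denote by $\epsilon^\star$ the corresponding co-unit $\Fg^\star\circ\Ug^\star\xRightarrow{\epsilon^\star}\Ig_{\Cf^\star}$.

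Then I would set $\hat{T}^\star:=\Ug^\star\circ\Fg^\star$, define the monadic multiplication as the whiskered co-unit $\mu^{\hat{T}^\star}:=\Ug^\star\circ\epsilon^\star\circ\Fg^\star$, and take the monadic unit to be $\eta^\star$ itself. The associativity and unitality axioms of the triple $(\hat{T}^\star,\mu^{\hat{T}^\star},\eta^\star)$ then follow directly from the two triangle identities of the adjunction $\Fg^\star\dashv\Ug^\star$, exactly as in~\cite[lemma~5.1.3]{Ri16} and in complete parallel with Corollary~\ref{cor: free-monads} for the non-involutive free strict globular $\omega$-category monad $\hat{T}$; no computation beyond these standard identities is needed.

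There is essentially no obstacle here: the only point worth stressing is that Proposition~\ref{prop: free-inv-omega-cat} genuinely supplies the adjunction with forgetful functor landing in $\Qf$ (rather than in a category of reflexive or self-dual $\omega$-sets), so that $\hat{T}^\star$ is an endofunctor of $\Qf$, which is precisely what is required later to form the bicategory $\Qf_{\hat{T}^\star}$ and the associated generalized operads. One may also remark, although it is not needed for the statement, that the explicit recursive construction $\Fg^\star(Q)=\Mg(Q)/\Xi$ given in the proof of Proposition~\ref{prop: free-inv-omega-cat} yields a concrete description of $\hat{T}^\star(Q)$ and hence of $\eta^\star$ and $\mu^{\hat{T}^\star}$ at the level of the underlying globular $\omega$-sets.
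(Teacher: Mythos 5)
Your proposal is correct and is precisely the route the paper takes: the corollary is presented as an immediate consequence of the adjunction $\Fg^\star\dashv\Ug^\star$ from proposition~\ref{prop: free-inv-omega-cat} together with the standard adjunction-to-monad construction recalled in remark~\ref{remark: monadic}, in exact parallel with corollary~\ref{cor: free-monads}. Your observation that $\Ug^\star$ lands in $\Qf$ itself (so that $\hat{T}^\star$ is an endofunctor of $\Qf$) is the one point worth making explicit, and you made it.
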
 

\subsection{Involutive Weak Globular $\omega$-categories} \label{sec: w-inv-omega} 

The following is the ``involutive case'' version of proposition~\ref{prop: Q-cart}.  
\begin{proposition}
The 1-category $\Qf$ of small globular $\omega$-sets with morphisms of globular $\omega$-sets 
is Cartesian. 
The 1-category $\Cf^\star$ of small strict globular involutive $\omega$-categories with involutive $\omega$-functors is Cartesian. 
The forgetful 1-functor $\Cf^\star\xrightarrow{\Ug^\star}\Qf$ and the free strict globular involutive $\omega$-category 1-functor $\Cf^\star\xleftarrow{\Fg^\star}\Qf$ are Cartesian. 
The free strict globular involutive $\omega$-category monad $T^\star:=\Ug^\star\circ\Fg^\star$ is Cartesian. 
\end{proposition}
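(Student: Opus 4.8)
The plan is to deduce the statement from its non-involutive counterpart, proposition~\ref{prop: Q-cart}, to which it reduces after two observations. The Cartesianity of $\Qf$ is already asserted in proposition~\ref{prop: Q-cart}. For $\Cf^\star$ and $\Ug^\star$, note that $\Cf^\star$ is a category of ``$\omega$-algebras'' over $\Qf$: its objects are globular $\omega$-sets carrying the operations $\iota^k$, $\circ^k_p$, $*_q$ subject to the equations recalled in the definition preceding proposition~\ref{prop: free-inv-omega-cat}, so the forgetful functor $\Ug^\star:\Cf^\star\to\Qf$ (as well as $\Cf^\star\to\Cf$) creates all limits. Since $\Qf$ has pull-backs (proposition~\ref{prop: Q-cart}), computed level-wise in $\mathbf{Set}$, it follows that $\Cf^\star$ has all pull-backs, also computed level-wise, and that $\Ug^\star$ preserves them. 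Only the Cartesianity of $\Fg^\star$ and of the monad $\hat{T}^\star$ then remains.

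For the free functor I would establish a factorization $\Fg^\star\cong\Fg\circ D$, where $D:\Qf\to\Qf$ is the ``formal-duals'' functor with $D(Q)^k:=(\ZZ/2)^k\times Q^k$, whose face maps $s^{k-1}_{D(Q)},t^{k-1}_{D(Q)}$ drop the top coordinate and interchange source with target precisely when that coordinate is $1$, and whose candidate dualities $*_q$ flip the $q$-th coordinate (trivially when $q\geq k$, matching the grounding axiom). Then $D$ is visibly a Cartesian monad on $\Qf$: its unit $\iota_Q:x\mapsto((\vec 0),x)$, its multiplication $\mu^D$ (addition of the two $(\ZZ/2)^k$-labels) and $D$ itself preserve pull-backs, since pull-backs in $\Qf$ are level-wise, the added data is a level-wise product with a fixed set, and every structure map is defined by a case-split on the $(\ZZ/2)^k$-coordinate, which is preserved by pull-back. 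The strict $\omega$-category $\Fg(D(Q))$ carries a canonical involutive structure — let $*_q$ be the unique $q$-contravariant $\omega$-functor extending the $q$-th coordinate flip on the generators $D(Q)$; the remaining involutive axioms hold because they either hold on the generators $D(Q)$ (the two sides being $\omega$-functors agreeing there) or follow automatically from $*_q$ being an $\omega$-functor — so $\Fg\circ D$ lifts to a functor $\Qf\to\Cf^\star$, and a routine ``self-dual extension'' argument gives a natural bijection $\Hom_{\Cf^\star}(\Fg(D(Q)),\Cs)\cong\Hom_\Qf(Q,\Ug^\star(\Cs))$; hence $\Fg\circ D\dashv\Ug^\star$, and $\Fg^\star\cong\Fg\circ D$ by uniqueness of adjoints. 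Since $\Fg$ is Cartesian (proposition~\ref{prop: Q-cart}) and $D$ is Cartesian, so is $\Fg^\star$, and likewise $\hat{T}^\star=\Ug^\star\circ\Fg^\star\cong\Ug\circ\Fg\circ D=\hat{T}\circ D$ preserves pull-backs.

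It remains to check that $\eta^{\hat{T}^\star}$ and $\mu^{\hat{T}^\star}$ are Cartesian. Tracing the unit of $\Fg\circ D\dashv\Ug^\star$ through the isomorphism gives $\eta^{\hat{T}^\star}_Q=\eta^{\hat{T}}_{D(Q)}\circ\iota_Q$, the vertical composite of the Cartesian transformation $\iota:\Ig_\Qf\Rightarrow D$ with the whiskering $\eta^{\hat{T}}D:D\Rightarrow\hat{T}\circ D$ of the Cartesian $\eta^{\hat{T}}$ (proposition~\ref{prop: Q-cart}); both are Cartesian, and Cartesian natural transformations are closed under vertical composition (pasting of pull-back squares), so $\eta^{\hat{T}^\star}$ is Cartesian. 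For $\mu^{\hat{T}^\star}$ I would use the description of $\hat{T}^\star(Q)\cong\hat{T}(D(Q))$ as the globular pasting diagrams labelled in $Q$ with each cell carrying a duality $(\ZZ/2)$-decoration — obtained from the free construction of proposition~\ref{prop: free-inv-omega-cat} by pushing all dualities down onto the generators — together with the operadic-substitution description of $\mu^{\hat{T}^\star}$, and verify Cartesianity by the same bookkeeping as in \cite[theorem F.2.2]{Lei04}, now on decorated diagrams: a decorated pasting diagram of decorated pasting diagrams labelled in a pull-back $Q_1\times_{Q_3}Q_2$ decomposes uniquely into a compatible pair over the one labelled in $Q_3$. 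I expect this last verification — equivalently, the Cartesianity of the distributive law $\delta:D\circ\hat{T}\Rightarrow\hat{T}\circ D$ through which $\mu^{\hat{T}^\star}$ factors — to be the only genuinely non-formal step; it is the decorated analogue of the combinatorial core of Leinster's proof, everything else being either inherited verbatim from proposition~\ref{prop: Q-cart} or elementary manipulation of the fixed groups $(\ZZ/2)^k$ attached level-wise by $D$.
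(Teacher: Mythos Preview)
Your approach is correct and genuinely different from the paper's. The paper proceeds by direct verification: pull-backs in $\Cf^\star$ are constructed componentwise with the operations defined coordinate-wise (so $\Ug^\star$ is Cartesian for free); Cartesianity of $\Fg^\star$ is asserted by claiming that $\Fg^\star(Q\times_X R)$ is canonically isomorphic to $\Fg^\star(Q)\times_{\Fg^\star(X)}\Fg^\star(R)$; and Cartesianity of $\eta^{\hat{T}^\star}$ and $\mu^{\hat{T}^\star}$ is argued by explicit element-chasing in the relevant pull-back squares, using the concrete description of $\eta^\star_Q(x)=[x]_\Xi$ from the free construction in proposition~\ref{prop: free-inv-omega-cat}. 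Your factorization $\hat{T}^\star\cong\hat{T}\circ D$ through the ``formal duals'' endofunctor $D$ is more structural: it makes explicit that the involutive monad differs from $\hat{T}$ only by a level-wise product with the finite groups $(\ZZ/2)^k$, and it reduces everything except the Cartesianity of the distributive law $\delta:D\circ\hat{T}\Rightarrow\hat{T}\circ D$ to the already-established non-involutive proposition~\ref{prop: Q-cart}. What the paper's approach buys is self-containment---no need to set up and verify the auxiliary adjunction $\Fg\circ D\dashv\Ug^\star$ or to unwind the composite-monad formula for $\mu^{\hat{T}^\star}$---while your approach buys conceptual clarity, a transparent reuse of Leinster's result, and a clean isolation of the single non-formal step. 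Both routes leave their combinatorial core (your Cartesianity of $\delta$; the paper's assertions about $\Fg^\star$ and $\mu^{\hat{T}^\star}$) at roughly the same level of detail.
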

\begin{proof}
The Cartesianity of the 1-category $\Qf$ of globular $\omega$-sets is already known by proposition~\ref{prop: Q-cart}. Following the exposition in~\cite[section 3.2]{Be23}, we recall that, given a span $Q\xrightarrow{\phi}X\xleftarrow{\psi}R$ of globular $\omega$-sets in $\Qf$, a pull-back can be constructed via the co-span of globular $\omega$-sets $Q\xleftarrow{\hat{\psi}}Q\times_X R\xrightarrow{\hat{\phi}}R$ defined by 
$Q\times_XR:=\left((Q\times_X R)^n \overset{s^n}{\underset{t^n}{\leftleftarrows}} (Q\times_X R)^{n+1}\right)_{n\in\NN}$ where
$(Q\times_X R)^n:=Q^n\times_{X^n}R^n:=\{(q,r)\in Q^n\times R^n \ | \ \phi^n(q)=\psi^n(r)\}$, 
$s^n(q,r):=(s^n_Q(q),s^n_R(r))$ and $t^n(q,r):=(t^n_Q(q),t^n_R(r))$, with $\hat{\phi}^n(q,r):=r$ and $\hat{\psi}^n(q,r):=q$, for $n\in\NN$.

\bigskip 

To prove the Cartesianity of the 1-category $\Cf^\star$, from~\cite[section 3.3]{Be23}, we recall that given any co-span $\As\xrightarrow{\phi}\Xs\xleftarrow{\psi}\Bs$ in $\Cf^\star$, a pull-back can be constructed via the previous span $\As\xleftarrow{\hat{\psi}}
\As\times_\Xs\Bs\xrightarrow{\hat{\phi}}\Bs$ in $\Qf$, noting that the globular $\omega$-set $\As\times_\Xs\Bs$ becomes a strict involutive globular $\omega$-category, with componentwise 
compositions $(a_1,b_1)\circ^n_q(a_2,b_2):=(a_1\ {\circ_\As}^n_q \ a_2,b_1\ {\circ_\As}^n_q\ b_2)$,  
identities $\iota^n(a,b):=(\iota^n_\As(a),\iota_\Bs^n(b))$, involutions $(a,b)^{*_q}:=(a^{*_q^\As},b^{*_q^\Bs})$; and that the above-defined $\hat{\phi}$ and $\hat{\psi}$ turn out to be involutive covariant $\omega$-functors. 

\medskip 

From the previous explicit definitions of pull-backs in $\Qf$ and $\Cf^\star$, it follows that the forgetful functor $\Ug^\star$ is Cartesian, since it associates to the standard pull-back of strict involutive globular $\omega$-categories the standard pull-back of their underlying globular $\omega$-sets. 

\medskip 

In order to prove the Cartesianity of the free strict involutive globular $\omega$-category functor $\Fg^\star$, we simply notice that 
$\Fg^\star(Q)\xleftarrow{\Fg^\star(\hat{\psi})}\Fg^\star(Q\times_X R)\xrightarrow{\Fg^\star(\hat{\phi})}\Fg^\star(R)$ is canonically $\Cf^\star$-isomorphic to the standard $\Cf^\star$-pull-back $\Fg^\star(Q)\xleftarrow{\widehat{\Fg^\star(\psi)}}\Fg^\star(Q)\times_{\Fg^\star(X)}\Fg^\star(R)\xrightarrow{\widehat{\Fg^\star(\phi)}}\Fg^\star(R)$ of the co-span  $\Fg^\star(Q)\xrightarrow{\Fg^\star(\phi)}\Fg^\star(X)\xleftarrow{\Fg^\star(\psi)}\Fg^\star(R)$, via an involutive $\omega$-functor. 

\medskip 

The composition of Cartesian functors is Cartesian, hence the Cartesianity of the monad $T^\star:=\Ug^\star\circ\Fg^\star$. 

\medskip 

For the Cartesianity of the natural transformation $\eta^\star$, we must show that, for any morphism $Q_1\xrightarrow{\phi}Q_2$ of globular $\omega$-sets, the solid square commuting diagram below is a pull-back in $\Qf$; for this purpose, for any span $Q_2\xleftarrow{\beta}R\xrightarrow{\alpha}\hat{T}^\star(Q_1)$ such that $\hat{T}^\star(\phi)\circ\alpha=\eta^\star_{Q_2}\circ\beta$, we must see that there exists a unique morphism $R\xrightarrow{\theta}Q_1$ making commutative the two triangle diagrams below: $\alpha=\eta^\star_{Q_1}\circ\theta$, $\beta=\phi\circ\theta$.  
\begin{equation*}
\xymatrix{
R \ar@{.>}@/^/[drrrrr]^\alpha \ar@{.>}@/_/[ddrr]_{\beta} \ar@{.>}[rrd]|{\theta} & & & & & 
\\
& & Q_1 \ar[rrr]^{\eta^\star_{Q_1}} \ar[d]_{\phi} & & & \hat{T}^\star(Q_1) \ar[d]^{\hat{T}^\star(\phi)}
\\
& & Q_2 \ar[rrr]^{\eta^\star_{Q_2}} & & & \hat{T}^\star(Q_2)}
\end{equation*}
From the explicit construction of the free involutive globular $\omega$-category of a globular $\omega$-set recalled in proposition~\ref{prop: free-inv-omega-cat}, for all $x\in Q$, we have that $\eta^\star_{Q}(x)=\varpi\circ\zeta_{Q}(x)=[x]_\Xi$ is a singleton containing only $x$.  
Hence, for all $r\in R$, $\eta^\star_{Q_2}(\beta(r))=[x_2]_{\Xi_2}$ is a singleton in $\hat{T}^\star(Q_2)$, with $x_2\in Q_2$; since $\hat{T}^\star(\phi)$, as every morphism of globular $\omega$-sets, is ``degree-preserving'', there exists a unique element $\theta(r)\in Q_1^0$ such that $(\hat{T}^\star(\phi))([\theta(r)]_{\Xi_1})=[x_2]_{\Xi_2}$. Such $\theta$ satisfies our requirements. 

\medskip 

For the Cartesianity of the natural transformation $\mu_Q$, 
we must show that, for any morphism $Q_1\xrightarrow{\phi}Q_2$ of globular $\omega$-sets, the solid square commuting diagram below is a pull-back in $\Qf$; for this purpose, for any span $(\hat{T}^\star\circ\hat{T}^\star)(Q_2)\xleftarrow{\beta}R\xrightarrow{\alpha}Q_1$ such that $\phi\circ\alpha=\mu^\star_{Q_2}\circ\beta$, we must see that there exists a unique morphism $R\xrightarrow{\theta}(\hat{T}^\star\circ\hat{T}^\star)(Q_1)$ making commutative the two triangle diagrams below: $\alpha=\mu^\star_{Q_1}\circ\theta$ and $\beta=((\hat{T}^\star\circ\hat{T}^\star)(\phi))\circ\theta$.  
\begin{equation*}
\xymatrix{
R \ar@{.>}@/^/[drrrrr]^\alpha \ar@{.>}@/_/[ddrr]_{\beta} \ar@{.>}[rrd]|{\theta} & & & & & 
\\
& & (\hat{T}^\star\circ \hat{T}^\star)(Q_1) \ar[rrr]^{\mu^\star_{Q_1}}  \ar[d]^{(\hat{T}^\star\circ\hat{T}^\star)(\phi)} & & & \hat{T}^\star (Q_1) \ar[d]_{\hat{T}^\star(\phi)} 
\\
& & (\hat{T}^\star\circ\hat{T}^\star)(Q_2)\ar[rrr]^{\mu^\star_{Q_2}} & & & \hat{T}^\star (Q_2)
}
\end{equation*}
Since for all $r\in R$, we have $\alpha(r)\in \hat{T}^\star(Q_1)$, the only possible element in $(\hat{T}^\star\circ \hat{T}^\star)(Q_1)$ that maps, via $\mu^\star_{Q_1}$ to $\alpha(r)$, must necessarily be $(\alpha(r))$ and the assignment $r\mapsto (\alpha(r))$ is a morphism of globular $\omega$-sets satisfying the required conditions. 
\end{proof}

As direct application of proposition~\ref{prop: T-bicat} to the Cartesian monad $\hat{T}^\star$ on the Cartesian category $\Qf$ we obtain: 
\begin{corollary}
There is a bicategory $\Qf_{\hat{T}^\star}$. 
\end{corollary}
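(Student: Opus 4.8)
The plan is to obtain $\Qf_{\hat{T}^\star}$ as the instance $\Ef_T$ of the construction in proposition~\ref{prop: T-bicat} corresponding to the choice $\Ef:=\Qf$ and $T:=\hat{T}^\star$. That proposition requires only two things: that $\Ef$ be a Cartesian category and that $(T,\mu^T,\eta^T)$ be a Cartesian monad on it. So the first step is simply to invoke the preceding proposition (the involutive analogue of proposition~\ref{prop: Q-cart}), which asserts precisely that $\Qf$ admits all pull-backs and that $\hat{T}^\star=\Ug^\star\circ\Fg^\star$ of corollary~\ref{cor: free-inv-monads}, together with its unit $\eta^\star$ and multiplication $\mu^\star$, are Cartesian. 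Once these hypotheses are in place, nothing remains beyond transcription.

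Concretely, I would fix the choice of pull-backs demanded by proposition~\ref{prop: T-bicat} to be the componentwise fibre products of globular $\omega$-sets recalled in the proof of the preceding proposition, i.e. $E_1\diamond_X E_2:=E_1\times_X E_2$ with $(E_1\times_X E_2)^n:=\{(a,b)\in E_1^n\times E_2^n \mid \alpha^n(a)=\beta^n(b)\}$ and componentwise source and target. With this convention the data of $\Qf_{\hat{T}^\star}$ are read off verbatim from proposition~\ref{prop: T-bicat}: $0$-cells are globular $\omega$-sets; $1$-cells $E_2\xleftarrow{(t_P,P,s_P)}E_1$ are spans $E_2\xleftarrow{t_P}P\xrightarrow{s_P}\hat{T}^\star(E_1)$ in $\Qf$; $2$-cells are morphisms $\phi$ in $\Qf$ with $t_Q\circ\phi=t_P$ and $s_Q\circ\phi=s_P$; vertical composition and vertical identities are those of $\Qf$; the horizontal composite of $(E_1\leftarrow P_1\to\hat{T}^\star(E_2))$ with $(E_2\leftarrow P_2\to\hat{T}^\star(E_3))$ is the span $E_1\xleftarrow{t_{P_1}\circ\pi_1}P_1\times_{\hat{T}^\star(E_2)}\hat{T}^\star(P_2)\xrightarrow{\mu^\star_{E_3}\circ\hat{T}^\star(s_{P_2})\circ\pi_2}\hat{T}^\star(E_3)$, with horizontal composition of $2$-cells induced by the universal property of pull-backs; horizontal identities are $\iota_{\hat{T}^\star}^0(E):=(E\xleftarrow{\id_E}E\xrightarrow{\eta^\star_E}\hat{T}^\star(E))$; and the associator $\alpha_{\hat{T}^\star}$ together with the left and right unitors $\lambda_{\hat{T}^\star},\rho_{\hat{T}^\star}$ are the natural isomorphisms constructed there from the Cartesianity of $\hat{T}^\star$, $\eta^\star$, $\mu^\star$ and the monad axioms, which for exactly the same reasons satisfy the pentagonal associator coherence and the triangular unitors coherence. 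This yields the bicategory $(\Qf_{\hat{T}^\star},\circ_{\hat{T}^\star},\iota_{\hat{T}^\star},\alpha_{\hat{T}^\star},\lambda_{\hat{T}^\star},\rho_{\hat{T}^\star})$.

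The only part that genuinely takes work is not located in this corollary at all but has already been discharged: the Cartesianity of $\hat{T}^\star$, and in particular of $\eta^\star$ and $\mu^\star$, was verified in the preceding proposition using the singleton description $\eta^\star_Q(x)=[x]_\Xi$ coming from the explicit construction in proposition~\ref{prop: free-inv-omega-cat}. Beyond that, the sole residual subtlety is foundational rather than mathematical: proposition~\ref{prop: T-bicat} and the bicategory it produces live in the $2$-category of (possibly) non-small categories, so the whole construction is to be understood inside the class-stratified set theory flagged in the footnote opening section~\ref{sec: pre}; no genuine obstacle remains.
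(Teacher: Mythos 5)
Your proposal is correct and follows exactly the paper's route: the paper obtains this corollary as a direct application of proposition~\ref{prop: T-bicat} to the Cartesian category $\Qf$ and the Cartesian monad $\hat{T}^\star$, whose Cartesianity (including that of $\eta^\star$ and $\mu^\star$) is established in the immediately preceding proposition. Your explicit transcription of the cells, compositions, unitors and associators is just an unwinding of that construction and adds nothing that needs separate justification.
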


The notion of Leinster contraction in definition~\ref{def: L-contraction} remains unchanged and, as anticipated in remark~\ref{rem: w-inv-omega}, we have a parallel version of definition~\ref{def: contracted_T-coll} and proposition~\ref{def: g-c-T-operads} that reformulate as follows, for the case of the free involutive globular $\omega$-category monad $\hat{T}^\star$. 
\begin{definition}
Let $\bullet\in\Qf$ denote a terminal object in the category of globular $\omega$-sets. 

\medskip 

A \emph{globular $\hat{T}^\star$-collection} is a morphism $Q\xrightarrow{\pi} \hat{T}^\star(\bullet)$ in $\Qf$; 
a \emph{globular contracted $\hat{T}^\star$-collection}  
consists of a Leinster contraction $\kappa$ on a globular $\hat{T}^\star$-collection $\pi$: 
$\Par(\pi) \xrightarrow{\kappa} Q \xrightarrow{\pi} \hat{T}^\star(\bullet)$. 
\end{definition}
Similarly, using the bicategory $\Qf_{\hat{T}^\star}$, for the Cartesian monad $\hat{T}^\star$, definition~\ref{def: ET} already provides the notion of globular (contracted) $\hat{T}^\star$-operad over $\bullet$. 
For technical reasons, in the proof of the subsequent theorem~\ref{th: free-c-T*}, we actually need to introduce the following more general ``magma structure'' internal to $\Qf_{\hat{T}^\star}$. 
\begin{definition} 
A \emph{globular $\hat{T}^\star$-operadic magma} \footnote{
Of course, although the definition is here given in the special case of the bicategory $\Qf_{\hat{T}^\star}$, it remains perfectly valid when applied to an arbitrary bicategory $\Ef_T$, for a given Cartesian monad $T$; furthermore one can define \textit{multicategorical magmas over $E$} using a given object $E$ in place of a terminal $\bullet$ in $\Ef_T$. 
} 
$(M,\pi_M,\eta_M,\mu_M)$   
over $\bullet$ is a 1-cell $\bullet\xrightarrow{M}\bullet$ in the bicategory $\Qf_{\hat{T}^\star}$ (hence a globular $\hat{T}^\star$-collection $M\xrightarrow{\pi_M}\hat{T}^\star(\bullet)$) that is equipped with a unit 2-cell $\bullet\xrightarrow{\eta_M} M$ and a multiplication 2-cell $M\circ^1_0 M\xrightarrow{\mu_M}M$ 
as specified in the following commutative diagrams \footnote{ 
For the description of the notation required in the square diagrams on the right, refer to remark~\ref{rem: q-cong} below. 
} 
in the category $\Qf$ 
\begin{gather} \label{eq: op-mag}
\vcenter{
\xymatrix{\bullet \rtwocell^{\bullet}_{M}{\ \ \eta_M} & \bullet}
}
=
\vcenter{
\xymatrix{
& \bullet \ar[dr]^{!} \ar@{.>}[dd]^{\eta_M} \ar[dl]_{\eta^{\hat{T}^\star}_\bullet} &
\\
\hat{T}^\star(\bullet) & & \bullet
\\
& \ar[ul]^{\pi_M} M \ar[ru]_{!} &  
}
}
=
\vcenter{
\xymatrix{
\bullet \ar[d]_{!}\ar[rr]^{\eta_M} & & M \ar[d]_{\pi_M} 
\\
\bullet \ar[rr]^{\eta_{\hat{T}^\star(\bullet)}} & & \hat{T}^\star(\bullet) 
}
}
\\
\vcenter{
\xymatrix{
\bullet \rtwocell^{M\circ^1_0M}_{M}{\ \ \mu_M} & \bullet
}
}
=
\vcenter{
\xymatrix{
& \ar[dl]_{\mu^{\hat{T}^\star}_\bullet\circ\hat{T}^\star(\pi_M)\circ\pi_2\ \ } M\times_{\hat{T}^\star(\bullet)}\hat{T}^\star(M) \ar[dr]^{!} \ar@{.>}[dd]^{\mu_M} & 
\\
\hat{T}^\star(\bullet) & & \bullet
\\
& \ar[ul]^{\pi_M} M \ar[ur]_{!} & 
}
}
=
\vcenter{
\xymatrix{
M\circ^1_0 M \ar[rr]^{\mu_M} \ar[d]_{\pi_M\circ^2_0\pi_M} & & M \ar[d]^{\pi_M} 
\\
\hat{T}^\star(\bullet)\circ^1_0\hat{T}^\star(\bullet) \ar[rr]^{\mu_{\hat{T}^\star(\bullet)}} & & \hat{T}^\star(\bullet) 
}
}
\end{gather}
that are not necessarily required to satisfy the operadic axioms~\ref{eq: monadic-ax}. 

\medskip 

A \emph{globular $\hat{T}^\star$-operad} is a globular $\hat{T}^\star$-operadic magma with unit and multiplication that satisfy the monadic associativity and unitality axioms~\ref{eq: monadic-ax}. 
\end{definition} 

\begin{proposition} \label{prop: all-*-cat}
For any terminal object $\bullet\in\Qf^0$, we have the following categories: 
\begin{itemize}
\item[$\blacktriangleright$]
the \emph{category $\Qf^{\hat{T}^\star}_\bullet$ of globular $\hat{T}^\star$-collections over $\bullet$}, 
\item[$\blacktriangleright$] 
the \emph{category $\Qf^{\hat{T}^\star,\kappa}_\bullet$ of globular contracted $\hat{T}^\star$-collections over $\bullet$}, 
\item[$\blacktriangleright$] 
the \emph{category $\Mf^{\hat{T}^\star}_\bullet$ of globular $\hat{T}^\star$-operadic magmas} over $\bullet$, 
\item[$\blacktriangleright$]  
the \emph{category $\Of^{\hat{T}^\star}_\bullet$ of globular $\hat{T}^\star$-operads over $\bullet$},  
\item[$\blacktriangleright$] 
the \emph{category $\Mf^{\hat{T}^\star,\kappa}_\bullet$ of globular contracted $\hat{T}^\star$-operadic magmas over $\bullet$}, 
\item[$\blacktriangleright$] 
the \emph{category $\Of^{\hat{T}^\star,\kappa}_\bullet$ of globular contracted $\hat{T}^\star$-operads over $\bullet$}. 
\end{itemize}
There are commuting diagram (in the category of functors between 1-categories) of forgetful functors:
\begin{equation*}
\xymatrix{
\Of^{\hat{T}^\star,\kappa}_\bullet \ar[rrrr]^{\hat{\Ug}^\star_\kappa} \ar[d]_{\hat{\Ug}^\star_\Of} & & & & \Of^{\hat{T}^\star}_\bullet \ar[d]^{\Ug^\star_\Of}
\\
\Qf^{\hat{T}^\star,\kappa}_\bullet \ar[rrrr]_{\Ug_\kappa^\star} & & & & \Qf^{\hat{T}^\star}_\bullet  
}
\quad \quad 
\xymatrix{
\Mf^{\hat{T}^\star,\kappa}_\bullet \ar[rrrr]^{\hat{\Ug}^\star_\kappa} \ar[d]_{\hat{\Ug}^\star_\Mf} & & & & \Mf^{\hat{T}^\star}_\bullet \ar[d]^{\Ug^\star_\Mf}
\\
\Qf^{\hat{T}^\star,\kappa}_\bullet \ar[rrrr]_{\Ug_\kappa^\star} & & & & \Qf^{\hat{T}^\star}_\bullet.  
} 
\end{equation*} 
The categories $\Of^{\hat{T}^\star}_\bullet$, respectively $\Of^{\hat{T}^\star,\kappa}_\bullet$ are full subcategories of $\Mf^{\hat{T}^\star}_\bullet$, respectively of $\Mf^{\hat{T}^\star,\kappa}_\bullet$. 
\end{proposition}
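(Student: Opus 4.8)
The plan is to read this proposition as a purely definitional bookkeeping statement: each of the six categories is obtained by instantiating a construction already set up in Section~\ref{sec: pre} at the Cartesian monad $\hat{T}^\star$, and the functors between them are the evident forgetful assignments, so the verification reduces to unwinding definitions.

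First I would dispose of the three categories requiring no new idea. Since the free involutive strict globular $\omega$-category monad $\hat{T}^\star$ on the Cartesian category $\Qf$ has just been shown to be Cartesian, Remark~\ref{rem: w-inv-omega} lets Definition~\ref{def: ET} and Proposition~\ref{def: g-c-T-operads} be applied verbatim with $\hat{T}$ replaced by $\hat{T}^\star$. This produces the category $\Qf^{\hat{T}^\star}_\bullet$ of globular $\hat{T}^\star$-collections, the category $\Qf^{\hat{T}^\star,\kappa}_\bullet$ of globular contracted $\hat{T}^\star$-collections and the category $\Of^{\hat{T}^\star}_\bullet=\Cf^{\hat{T}^\star}_\bullet$ of globular $\hat{T}^\star$-operads, together with the forgetful functors $\Qf^{\hat{T}^\star,\kappa}_\bullet\xrightarrow{\Ug^\star_\kappa}\Qf^{\hat{T}^\star}_\bullet$ and $\Of^{\hat{T}^\star,\kappa}_\bullet\xrightarrow{\hat{\Ug}^\star_\kappa}\Of^{\hat{T}^\star}_\bullet$ (which drop the contraction) and $\Of^{\hat{T}^\star,\kappa}_\bullet\xrightarrow{\hat{\Ug}^\star_\Of}\Qf^{\hat{T}^\star,\kappa}_\bullet$, $\Of^{\hat{T}^\star}_\bullet\xrightarrow{\Ug^\star_\Of}\Qf^{\hat{T}^\star}_\bullet$ (which drop the operadic unit and multiplication). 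Commutativity of the left-hand square is then immediate, since all four functors are of the form ``keep the underlying datum, forget a piece of structure'': both composites send a globular contracted $\hat{T}^\star$-operad $(P,\pi_P,\kappa_P,\mu_P,\eta_P)$ to the bare collection $(P,\pi_P)$ and a morphism to its underlying morphism of globular $\omega$-sets, and identities and composites are preserved on the nose.

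Next I would build the two genuinely new categories. An object of $\Mf^{\hat{T}^\star}_\bullet$ is a globular $\hat{T}^\star$-operadic magma $(M,\pi_M,\eta_M,\mu_M)$ in the sense defined above, and a morphism $(M,\pi_M,\eta_M,\mu_M)\xrightarrow{\phi}(N,\pi_N,\eta_N,\mu_N)$ is a morphism $M\xrightarrow{\phi}N$ of globular $\omega$-sets with $\pi_N\circ\phi=\pi_M$ (equivalently a $2$-cell of $\Qf_{\hat{T}^\star}$ between the $1$-cells $M$ and $N$) such that the two squares of Proposition~\ref{prop: operad-cat}, namely $\phi\circ^2_1\mu_M=\mu_N\circ^2_1(\phi\circ^2_0\phi)$ and $\phi\circ^2_1\eta_M=\eta_N$, commute --- but now without presupposing the monadic axioms~\eqref{eq: monadic-ax} on $M$ or $N$. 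Composition and identities are those of $2$-cells in $\Qf_{\hat{T}^\star}$ (i.e.\ ordinary composition in $\Qf$). For $\Mf^{\hat{T}^\star,\kappa}_\bullet$ I would take objects $(M,\pi_M,\kappa_M,\eta_M,\mu_M)$ with $(M,\pi_M,\eta_M,\mu_M)\in\Mf^{\hat{T}^\star}_\bullet$ and $(M,\pi_M,\kappa_M)\in\Qf^{\hat{T}^\star,\kappa}_\bullet$ --- imposing, as in Proposition~\ref{def: g-c-T-operads}, no compatibility between $\kappa_M$ and $\eta_M,\mu_M$ --- and as morphisms those $\phi$ that are simultaneously morphisms in $\Mf^{\hat{T}^\star}_\bullet$ and in $\Qf^{\hat{T}^\star,\kappa}_\bullet$, i.e.\ that also satisfy $\phi\circ\kappa_M=\kappa_N\circ\Par_\phi$ (with $\Par_\phi$ provided by Lemma~\ref{lem: parallel}). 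The forgetful functors $\hat{\Ug}^\star_\Mf$, $\Ug^\star_\Mf$ (drop $\eta,\mu$) and $\hat{\Ug}^\star_\kappa$, $\Ug^\star_\kappa$ (drop $\kappa$) are then the evident assignments, and the right-hand square commutes for exactly the trivial reason that made the left-hand one commute. Finally, $\Of^{\hat{T}^\star}_\bullet$, respectively $\Of^{\hat{T}^\star,\kappa}_\bullet$, is the full subcategory of $\Mf^{\hat{T}^\star}_\bullet$, respectively of $\Mf^{\hat{T}^\star,\kappa}_\bullet$, determined by the objects whose unit and multiplication satisfy~\eqref{eq: monadic-ax}: this is a property of objects and the morphisms are literally the same, so the inclusion is full.

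The one genuine (and very mild) obstacle is checking that the unit and multiplication compatibility squares for morphisms of operadic magmas are stable under vertical composition of $2$-cells in $\Qf_{\hat{T}^\star}$, so that $\Mf^{\hat{T}^\star}_\bullet$ and $\Mf^{\hat{T}^\star,\kappa}_\bullet$ really are categories. This is a pasting argument that uses the functoriality of horizontal composition $\circ^2_0$ with respect to vertical composition $\circ^2_1$, which is part of the bicategory axioms for $\Qf_{\hat{T}^\star}$ established by Proposition~\ref{prop: T-bicat} applied to the Cartesian monad $\hat{T}^\star$. Everything else is a transcription of Definition~\ref{def: ET}, Proposition~\ref{prop: operad-cat} and Proposition~\ref{def: g-c-T-operads}; in particular the involution plays no role here beyond the already established Cartesianity of $\hat{T}^\star$.
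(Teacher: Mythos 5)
Your proposal is correct and matches the paper's (implicit) treatment: the paper states this proposition without proof precisely because, as you observe, it amounts to instantiating Definition~\ref{def: ET} and Proposition~\ref{def: g-c-T-operads} at the Cartesian monad $\hat{T}^\star$ via Remark~\ref{rem: w-inv-omega}, adjoining the two magma categories by dropping the monadic axioms, and noting that all forgetful functors merely discard structure. Your extra check that the morphism compatibility squares are closed under composition (via the interchange/functoriality of $\circ^2_0$ over $\circ^2_1$ in $\Qf_{\hat{T}^\star}$) is the one point the paper leaves tacit, and you handle it correctly.
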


\begin{remark} \label{rem: ope-con*}
Exactly as already noticed in remark~\ref{rem: ope-con}, it is possible to introduce more restrictive notions of globular $\hat{T}^\star$-operadic-magma-contraction and globular $\hat{T}^\star$-operadic-contraction. 

\medskip 

A \emph{globular $\hat{T}^\star$-operadic magma contraction} over $\bullet$ $(M,\pi_M,\kappa_M,\eta_M,\mu_M)$ consists of a globular contracted $\hat{T}^\star$-operadic magma over $\bullet$ that further satisfies the commutativity of the following two diagrams in $\Qf^{\hat{T}^\star}_\bullet$: 
\begin{equation}\label{eq: L*+}
\begin{aligned}
\xymatrix{
\Par(\pi_\bullet) \ar[d]_{\kappa_\bullet} \ar[rrr]^{\Par_{\eta_M}} & & &  \Par(\pi_M) \ar[d]^{\kappa_M}
\\
\bullet \ar[rrr]^{\eta_M} & & & M
}
\quad \quad 
\xymatrix{
\Par(\pi_M\circ^2_0\pi_M) \ar[d]_{\kappa_{M}\circ^2_0\kappa_M} \ar[rrr]^{\Par_{\mu_M}} & & & \Par(\pi_M) \ar[d]^{\kappa_M}
\\
M\circ^1_0 M \ar[rrr]^{\mu_M} & & & M. 
}
\end{aligned}
\end{equation}
A \emph{globular $\hat{T}^\star$-operadic contraction} over $\bullet$ is just an globular contracted $\hat{T}^\star$-operad $(M,\pi_M,\kappa_M,\eta_M,\mu_M)$ that also satisfies the previous commutative diagrams.
 
\medskip 

We can introduce the category $\Mf\Kf^{\hat{T}^\star}_\bullet$, of globular $\hat{T}^\star$-operadic magma contractions over $\bullet$, as the full subcategory of $\Mf^{\hat{T}^\star,\kappa}_\bullet$; similarly the category $\Of\Kf^{\hat{T}^\star}_\bullet$, of globular $\hat{T}^\star$-operadic contractions over $\bullet$, as the full subcategory of $\Of^{\hat{T}^\star,\kappa}_\bullet$.  
\xqed{\lrcorner}
\end{remark}

The following remark is absolutely crucial for us: it identifies the terminal contracted-$\hat{T}^\star$-operad in $\Of^{\hat{T}^\star,\kappa}_\bullet$. 
\begin{remark} \label{rem: q-cong}
Notice that the globular $\omega$-set $\hat{T}^\star(\bullet)$ is naturally a globular $\hat{T}^\star$-collection $\hat{T}^\star(\bullet)\xrightarrow{\pi_{\hat{T}^\star(\bullet)}}\hat{T}^\star(\bullet)$ with projection $\pi_{\hat{T}^\star(\bullet)}$ the identity morphism of globular $\omega$-sets, 
\begin{itemize}
\item[$\blacktriangleright$]
for any $\hat{T}^\star$-collection $Q\xrightarrow{\pi}\hat{T}^\star(\bullet)$, the projection $\pi$ is a morphism in $\Qf_\bullet^{\hat{T}^\star}$, 
\begin{equation*}
\vcenter{
\xymatrix{
Q \ar[rrrr]^{\pi} \ar[drr]_{\pi} & & & & \hat{T}^\star(\bullet) \ar[dll]^{\pi_{\hat{T}^\star(\bullet)}}
\\ 
& & \hat{T}^\star(\bullet) & & 
}
} \quad \quad \pi_{\hat{T}^\star(\bullet)}:x\mapsto x.  
\end{equation*}
\end{itemize}
It also naturally becomes a contracted globular $\hat{T}^\star$-collection 
$\Par(\pi_{\hat{T}^\star(\bullet)})\xrightarrow{\kappa_{\hat{T}^\star(\bullet)}}\hat{T}^\star(\bullet)\xrightarrow{\pi_{\hat{T}^\star(\bullet)}}\hat{T}^\star(\bullet)$ with 
contraction $\kappa_{\hat{T}^\star(\bullet)}:(y^+,y,y^-)\mapsto y$ on 
$\Par(\pi_{\hat{T}^\star(\bullet)})=\Big\{(y^+,y,y^-)\in \hat{T}^\star(\bullet)\times\hat{T}^\star(\bullet)\times\hat{T}^\star(\bullet) \ \big| \ 
\xymatrix{\rtwocell^{y^+}_{y^-}{y}&} 
\Big\}$, 
\begin{itemize}
\item[$\blacktriangleright$]
for any contracted $\hat{T}^\star$-collection $\Par(\pi)\xrightarrow{\kappa}Q\xrightarrow{\pi}\hat{T}^\star(\bullet)$, the projection $\pi$ is a morphism in $\Qf_\bullet^{\hat{T}^\star,\kappa}$: 
\begin{gather*}
\vcenter{
\xymatrix{
\Par(\pi) \ar[d]_{\Par_\pi} \ar[rrrr]^{\kappa} & & & & Q  \ar[d]^{\pi}
\\
\Par(\pi_{\hat{T}^\star(\bullet)}) \ar[rrrr]^{\kappa_{\hat{T}^\star(\bullet)}} & & & & \hat{T}^\star(\bullet)
}
} \quad \quad 
\begin{aligned}
&\kappa_{\hat{T}^\star(\bullet)}:(y^+,y,y^-)\mapsto y, 
\\
&\Par_\pi: (x^+,y,x^-)\mapsto (\pi(x^+),y,\pi(x^-)), 
\end{aligned}
\end{gather*}
\end{itemize}

Furthermore $\hat{T}^\star(\bullet)$ is a globular $\hat{T}^\star$-operad with operadic unit 
$\bullet\xrightarrow{\eta_{\hat{T}^\star(\bullet)}}\hat{T}^\star(\bullet)$ coinciding with the $\hat{T}^\star$-monadic unit $\eta_{\hat{T}^\star(\bullet)}:=\eta^{\hat{T}^\star}_\bullet$,  
and operadic multiplication $\hat{T}^\star(\bullet)\circ^1_0\hat{T}^\star(\bullet) \xrightarrow{\mu_{\hat{T}^\star(\bullet)}}\hat{T}^\star(\bullet)$ given by 
$\mu_{\hat{T}^\star(\bullet)}:=\nu\circ(\pi_{\hat{T}^\star(\bullet)},\mu^{\hat{T}^\star}_\bullet)$, 
$ 
\hat{T}^\star(\bullet)\circ^1_0\hat{T}^\star(\bullet)
=\hat{T}^\star(\bullet)\times_{\hat{T}^\star(\bullet)}\hat{T}^\star(\hat{T}^\star(\bullet))
=\hat{T}^\star(\bullet)\times_{\hat{T}^\star(\bullet)}(\hat{T}^\star)^2(\bullet)
\xrightarrow{(\pi_{\hat{T}^\star(\bullet)},\mu^{\hat{T}^\star}_\bullet)}
\hat{T}^\star(\bullet)\times_{\hat{T}^\star(\bullet)}\hat{T}^\star(\bullet)
\xrightarrow{\nu}\hat{T}^\star(\bullet)$, 
where $(\hat{T}^\star)^2(\bullet)\xrightarrow{\mu^{\hat{T}^\star}_\bullet}\hat{T}^\star(\bullet)$ is the $\hat{T}^\star$-monadic multiplication, $\hat{T}^\star(\bullet)\times_{\hat{T}^\star(\bullet)}\hat{T}^\star(\bullet)
\xrightarrow{\nu}\hat{T}^\star(\bullet)$ is an isomorphism. 

To show that $\hat{T}^\star(\bullet)$ is a $\hat{T}^\star$-operad one verifies (using the definitions~\eqref{eq: bicat-associators-unitors} \eqref{eq: bicat-left-unitors} \eqref{eq: bicat-right-unitors} of associators and unitors via universal factorization property of pull-backs in the Cartesian category $\Qf_{\hat{T}^\star}$ and the equations~\eqref{eq: monad-asso-u} for the monad $\hat{T}^\star$) the associativity and unitality properties already described in~\eqref{eq: monadic-ax}, here in the case of $\hat{T}^\star(\bullet)$:
\begin{gather*}
\mu_{\hat{T}^\star(\bullet)}\circ^2_1 (\eta_{\hat{T}^\star(\bullet)}\circ^2_0\iota^1_{\hat{T}^\star(\bullet)})
= \nu\circ (\iota_{\hat{T}^\star(\bullet)},\mu^{\hat{T}^\star}_\bullet)\circ (\eta_\bullet^{\hat{T}^\star},\ \iota_{\hat{T}^\star(\bullet)})
=\lambda, 
\\
\mu_{\hat{T}^\star(\bullet)}\circ^2_1(\iota^1_{\hat{T}^\star(\bullet)}\circ^2_0\eta_{\hat{T}^\star(\bullet)})
=\nu\circ (\mu^{\hat{T}^\star}_\bullet,\iota_{\hat{T}^\star(\bullet)})\circ (\iota_{\hat{T}^\star(\bullet)},\ \eta_\bullet^{\hat{T}^\star})
=\rho, 
\end{gather*}
can be respectively obtained by the commutativity of the following two diagrams and the unicity of $\lambda$ and $\rho$, 
$\xymatrix{
&\bullet\diamond (\hat{T}^\star)^2(\bullet) \ar[dr]^{\pi_2}\ar[dl]_{\pi_1} \ar@/_4cm/[ddd]^\lambda \ar[d]|{(\eta_\bullet^{\hat{T}^\star},\ \iota_{\hat{T}^\star(\bullet)})}& 
\\
\bullet & \ar[l]_{!\circ\pi_1\quad \quad }\hat{T}^\star(\bullet)\diamond(\hat{T}^\star)^2(\bullet) \ar[r]^{\quad \pi_2} \ar[d]|{(\iota_{\hat{T}^\star(\bullet)},\mu^{\hat{T}^\star}_\bullet)} & (\hat{T}^\star)^2(\bullet) 
\\
& \hat{T}^\star(\bullet)\diamond\hat{T}^\star(\bullet) \ar[d]_\nu \ar[d]_\nu \ar[ur]\ar[ul] & 
\\
& \hat{T}^\star(\bullet) \ar[uul]^{!} \ar[uur]_{\eta^{\hat{T}^\star}_{\hat{T}^\star(\bullet)}}& 
} 
\quad 
\xymatrix{
& (\hat{T}^\star)^2(\bullet)\diamond\bullet \ar[dr]^{\pi_2}\ar[dl]_{\pi_1} \ar@/^4cm/[ddd]_\rho \ar[d]|{(\iota_{\hat{T}^\star(\bullet)},\ \eta_\bullet^{\hat{T}^\star})} & 
\\
(\hat{T}^\star)^2(\bullet) & \ar[l]_{\pi_1\quad} (\hat{T}^\star)^2(\bullet)\diamond\hat{T}^\star(\bullet) \ar[r]^{\quad\quad !\circ\pi_2} \ar[d]|{(\mu^{\hat{T}^\star}_\bullet,\iota_{\hat{T}^\star(\bullet)})}
& \bullet 
\\
& \hat{T}^\star(\bullet)\diamond\hat{T}^\star(\bullet) \ar[d]_\nu \ar[ur]\ar[ul] & 
\\
& \ar[uur]_{!} \hat{T}^\star(\bullet) \ar[uul]^{\eta^{\hat{T}^\star}_{\hat{T}^\star(\bullet)}} & 
}
$ 
\\ 
the operadic associativity of $\hat{T}^\star(\bullet)$, that consists of the following identity 
\begin{gather*} 
\mu_{\hat{T}^\star(\bullet)}\circ^2_1(\iota^1_{\hat{T}^\star(\bullet)}\circ^2_0 \mu_{\hat{T}^\star(\bullet)})\circ^2_1 \alpha
=\mu_{\hat{T}^\star(\bullet)}\circ^2_1 (\mu_{\hat{T}^\star(\bullet)}\circ^2_0\iota^1_{\hat{T}^\star(\bullet)}), 
\end{gather*}
can be obtained reconsidering the unicity of $\alpha:=\alpha_{\hat{T}^\star(\bullet)\hat{T}^\star(\bullet)\hat{T}^\star(\bullet)}$ in diagram~\eqref{eq: op-asso-T*}, reproduced here in our case, 
\begin{equation*}
\xymatrix{
\bullet & & & (\hat{T}^\star)^3(\bullet) \ar[d]_{\mu^{\hat{T}^\star}_{\hat{T}^\star(\bullet)}} \ar[r]^{
\iota^1_{(\hat{T}^\star)^3(\bullet)} 
} 
& (\hat{T}^\star)^3(\bullet) \ar[r]^{\hat{T}^\star(\mu^{\hat{T}^\star}_{\bullet})} \ar[d]_{\mu^{\hat{T}^\star}_{\hat{T}^\star(\bullet)}} & (\hat{T}^\star)^2(\bullet) \ar[d]^{\mu^{\hat{T}^\star}_{\bullet}} & 
\\
& & \ar[llu]|{t_{P_1}\circ\pi_1} 
\quad \hat{T}^\star(\bullet)\diamond\hat{T}^\star(\hat{T}^\star(\bullet)\diamond (\hat{T}^\star)^2(\bullet))
\ar[ur]|{\hat{T}^\star(\pi_2)\circ^1_0\pi_2}
& (\hat{T}^\star)^2(\bullet) \ar[r]_{
\iota^1_{(\hat{T}^\star)^2(\bullet)} 
} & (\hat{T}^\star)^2(\bullet) \ar[r]_{\mu^{\hat{T}^\star}_{\bullet}} & \hat{T}^\star(\bullet) 
\\
& & \ar[uull]^{t_{P_1}\circ\pi_1\circ\pi_1} 
(\hat{T}^\star(\bullet)\diamond (\hat{T}^\star)^2(\bullet))\diamond \hat{T}^\star(\bullet) 
\ar@{.>}[u]|{\alpha} \ar[ur]_{\pi_2}
}
\end{equation*}
and simply noting the structural properties of the multiplication maps involved:  
\begin{gather*}
\xymatrix{
\bullet & & \lltwocell^{\hat{T}^\star(\bullet)}_{(\hat{T}^\star(\bullet)\diamond (\hat{T}^\star)^2(\bullet))\diamond \hat{T}^\star(\bullet)}{^} \bullet
}
\quad = \quad 
\vcenter{
\xymatrix{
& & \ar[dll]_{!} (\hat{T}^\star(\bullet)\diamond (\hat{T}^\star)^2(\bullet))\diamond \hat{T}^\star(\bullet) \ar[drr]^{\mu^{\hat{T}^\star}_\bullet\circ \pi_2} 
\ar[d]|{\mu_{\hat{T}^\star(\bullet)}\circ^2_1 (\mu_{\hat{T}^\star(\bullet)}\circ^2_0\iota^1_{\hat{T}^\star(\bullet)})}& & 
\\
\bullet & & \ar[ll]_{!} \hat{T}^\star(\bullet) \ar[rr]^{\iota^1_{\hat{T}^\star(\bullet)}} & & \hat{T}^\star(\bullet), 
}
}
\\
\xymatrix{
\bullet & & \lltwocell^{\hat{T}^\star(\bullet)}_{\hat{T}^\star(\bullet)\diamond\hat{T}^\star(\hat{T}^\star(\bullet)\diamond (\hat{T}^\star)^2(\bullet))}{^} \bullet
}
\quad = \quad 
\vcenter{
\xymatrix{
& & \ar[dll]_{!} 
\hat{T}^\star(\bullet)\diamond\hat{T}^\star(\hat{T}^\star(\bullet)\diamond (\hat{T}^\star)^2(\bullet)) \ar[drr]^{\quad\quad\quad\mu^{\hat{T}^\star}_\bullet\circ \hat{T}^\star(\mu^{\hat{T}^\star}_\bullet)\circ(\pi_2\circ^1_0\hat{T}^\star(\pi_2))} 
\ar[d]|{
\mu_{\hat{T}^\star(\bullet)}\circ^2_1(\mu_{\hat{T}^\star(\bullet)}\circ^2_0\iota^1_{\hat{T}^\star(\bullet)})}
& & 
\\
\bullet & & \ar[ll]_{!} \hat{T}^\star(\bullet) \ar[rr]^{\iota^1_{\hat{T}^\star(\bullet)}} & & \hat{T}^\star(\bullet).
}
}
\end{gather*}

\begin{itemize}
\item[$\blacktriangleright$] 
for any (contracted) globular $\hat{T}^\star$-operadic magma $(M,\pi_M,\eta_M,\mu_M)$, and hence for any (contracted) globular $\hat{T}^\star$-operad, the projection $M\xrightarrow{\pi_M}\hat{T}^\star(\bullet)$ is a morphism in $\Mf^{\hat{T}^\star}_\bullet$ (and respectively in $\Mf^{\hat{T}^\star,\kappa}_\bullet$): 
\begin{gather*}
\vcenter{
\xymatrix{
\bullet \ar[rr]^{\eta_M} \ar[d]_{!} & & M \ar[d]^{\pi_M}
\\
\bullet \ar[rr]^{\eta_{\hat{T}^\star(\bullet)}} & & \hat{T}^\star(\bullet)
}
}
\quad \quad \text{where $\eta_{\hat{T}^\star(\bullet)}:=\eta_\bullet^{\hat{T}^\star}$ and $\bullet\xrightarrow{!}\bullet$ is the terminal morphism},  
\\ 
\begin{aligned}
& \mu_{\hat{T}^\star(\bullet)}:=\nu\circ(\pi_{\hat{T}^\star(\bullet)},\mu^{\hat{T}^\star}_\bullet) 
\\
&\pi_M\circ^2_0\pi_M:(x,y)\mapsto 
\left(\pi_M(x),\hat{T}^\star(\pi_M))(y)\right) 
\end{aligned}
\quad \quad 
\vcenter{
\xymatrix{
M\circ^1_0M \ar[rr]^{\mu_M} \ar[d]_{\pi_M\circ^2_0\pi_M} & & M \ar[d]^{\pi_M}
\\
\hat{T}^\star(\bullet)\circ^1_0 \hat{T}^\star(\bullet)\ar[rr]^{\mu_{\hat{T}^\star(\bullet)}} & & \hat{T}^\star(\bullet)
}
}
\end{gather*}
\end{itemize} 
where we have $M\circ^1_0M=M\times_{\hat{T}^\star(\bullet)}\hat{T}^\star(M)\xrightarrow{\pi_M\circ^2_0\pi_M} \hat{T}^\star(\bullet)\times_{\hat{T}^\star(\bullet)}\hat{T}^\star(\hat{T}^\star(\bullet))
=\hat{T}^\star(\bullet)\circ^1_0\hat{T}^\star(\bullet)$. 

\medskip 

The (contracted) operad $\hat{T}^\star(\bullet)$ is final in $\Of^{\hat{T}^\star,\kappa}_\bullet$: for any other (contrated) $\hat{T}^\star$-operad $(P,\pi^P,\mu^P,\eta^P,\kappa^P)$ the unique morphism of (contracted) $\hat{T}^\star$-operads into $\hat{T}^\star(\bullet)$ is given by the projection  $P\xrightarrow{\pi^P}\hat{T}^\star(\bullet)$.  

\medskip 

Actually $\hat{T}^\star(\bullet)$ is also a $\hat{T}^\star$-operadic contraction since it furthermore satisfies, by direct computation, the following compatibility properties between contraction and operad structures (see diagrams~\eqref{eq: d1} and~\eqref{eq: d2}): 
\begin{equation*}
\mu_{\hat{T}^\star(\bullet)}\circ(\kappa_{\hat{T}^\star(\bullet)}\circ^2_0\kappa_{\hat{T}^\star(\bullet)}) = \kappa_{\hat{T}^\star(\bullet)}\circ \Par_{\mu_{\hat{T}^\star(\bullet)}},
\\
\eta_{\hat{T}^\star(\bullet)}\circ\kappa_{\pi_\bullet} =\kappa_{\hat{T}^\star(\bullet)}\circ\Par_{\eta_{\hat{T}^\star(\bullet)}}.
\end{equation*}
\begin{itemize}
\item[$\blacktriangleright$] 
for any globular $\hat{T}^\star$-operadic contraction magma $(M,\pi_M,\eta_M,\mu_M)$ (and for any globular $\hat{T}^\star$-operadic contraction) the projection $M\xrightarrow{\pi_M}\hat{T}^\star(\bullet)$ is a morphism in $\Mf\Kf^{\hat{T}^\star}_\bullet$ (and respectively in $\Of\Kf^{\hat{T}^\star}_\bullet$): 

\begin{gather}\label{eq: d1}
\vcenter{\xymatrix{
\ar[dr]_{\kappa_\bullet} \Par(\pi_\bullet) \ar[rrrr]^{\Par_{\eta_M}} \ar[ddd]_{\Par_!} & & & & \Par(\pi_M) \ar[ddd]^{\Par_{\pi_M}} \ar[dl]_{\kappa_M}
\\
& \bullet \ar[d]_{!}\ar[rr]^{\eta_M} & & M \ar[d]_{\pi_M} & 
\\
& \bullet \ar[rr]^{\eta_{\hat{T}^\star(\bullet)}} & & \hat{T}^\star(\bullet) & 
\\
\ar[ur]^{\kappa_{\pi_\bullet}} \Par(\pi_\bullet) \ar[rrrr]^{\Par_{\eta_{\hat{T}^\star}}} & & & & \Par(\pi_{\hat{T}^\star(\bullet)}) \ar[ul]^{\kappa_{\hat{T}^\star(\bullet)}}
}
}\quad \quad \quad \text{where:} 
\\ \notag
\Par(\pi_\bullet)=\Big\{(x^+,y,x^-) \ | \ \xymatrix{ \rtwocell^{x^+}_{x^-}{y} & } \ \in \bullet \Big\},
\quad 
\Par_{\eta^{\hat{T}^\star}} : (x^+,y,x^-)\mapsto (x^+,\eta^{\hat{T}^\star}(y),x^-),
\\ \notag
\kappa_{\hat{T}^\star(\bullet)}: (x^+,\eta^{\hat{T}^\star}(y),x^-)\mapsto \eta^{\hat{T}^\star}(y),
\quad 
\kappa_{\pi_\bullet}: (x^+,y,x^-)\mapsto y; 
\end{gather}
\begin{gather}
\label{eq: d2}
\vcenter{
\xymatrix{
\Par(\pi_M\circ^2_0\pi_M) \ar[rrrr]^{\Par_{\mu_M}} \ar[ddd]_{\Par_{\pi_M\circ^2_0\pi_M}} \ar[dr]_{\kappa_M\circ^2_0\kappa_M}
& & & & \Par(\pi_M) \ar[dl]_{\kappa_M}\ar[ddd]^{\Par_{\pi_M}}
\\
& M\circ^1_0 M \ar[rr]^{\mu_M} \ar[d]_{\pi_M\circ^2_0\pi_M} & & M \ar[d]^{\pi_M} & 
\\
& \hat{T}^\star(\bullet)\circ^1_0\hat{T}^\star(\bullet) \ar[rr]^{\mu_{\hat{T}^\star(\bullet)}} & & \hat{T}^\star(\bullet) & 
\\
\Par(\pi_{\hat{T}^\star(\bullet)}\circ^2_0\pi_{\hat{T}^\star(\bullet)})\ar[ur]_{\kappa_{\hat{T}^\star(\bullet)}\circ^2_0\kappa_{\hat{T}^\star(\bullet)}} \ar[rrrr]^{\Par_{\mu_{\hat{T}^\star(\bullet)}}} & & & & \Par(\pi_{\hat{T}^\star(\bullet)}). \ar[ul]_{\kappa_{\hat{T}^\star(\bullet)}}
}
}
\\ \notag 
\Par(\pi_{\hat{T}^\star(\bullet)}\circ^2_0\pi_{\hat{T}^\star(\bullet)})\simeq \Par(\pi_{\hat{T}^\star(\bullet)})\circ^1_0\Par(\pi_{\hat{T}^\star(\bullet)}), 
\\ \notag 
\kappa_{\hat{T}^\star(\bullet)}:(\mu(x^+_1,x^+_2),\mu(y_1,y_2),\mu(x^-_1,x^-_2))\mapsto \mu(y_1,y_2), 
\quad \kappa_{\mu_{\hat{T}^\star(\bullet)}}: (y_1,y_2)\mapsto \mu(y_1,y_2). 
\\ \notag
\kappa_{\hat{T}^\star(\bullet)}\circ^2_0\kappa_{\hat{T}^\star(\bullet)}:((x^+_1,y_1,x^-_1),(x^+_1,y_1,x^-_1))\mapsto (y_1,y_2), 
\\ \notag
\Par_{\mu_{\hat{T}^\star(\bullet)}}:((x^+_1,y_1,x^-_1),(x^+_1,y_1,x^-_1))\mapsto (\mu(x^+_1,x^+_2),\mu(y_1,y_2),\mu(x^-_1,x^-_2)).  
\end{gather}
\end{itemize}
The $\hat{T}^\star$-operadic magma contraction (respectively the $\hat{T}^\star$-operadic contraction) $\hat{T}^\star(\bullet)$ is final in $\Mf\Kf^{\hat{T}^\star}_\bullet$ (respectively in $\Of\Kf^{\hat{T}^\star}_\bullet$): the projection $M\xrightarrow{\pi_M}\hat{T}^\star(\bullet)$ being the terminal morphism from any other object $M$. 
\xqed{\lrcorner}
\end{remark}

The following is the fundamental theorem in our paper, allowing the definition of weak involutive $\omega$-categories.  
\begin{theorem}\label{th: L*}
The category $\Of^{\hat{T}^\star,\kappa}_\bullet$ of globular contracted $\hat{T}^\star$-operads has initial objects. 
\end{theorem}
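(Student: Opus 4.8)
The plan is to reduce Theorem~\ref{th: L*} to the categorical machinery that Leinster assembled for Theorem~\ref{prop: L}, now applied to the free involutive strict globular $\omega$-category monad $\hat{T}^\star$ of corollary~\ref{cor: free-inv-monads} in place of $\hat{T}$. The crucial input, already established in this subsection, is that $\hat{T}^\star$ is a Cartesian monad on the Cartesian category $\Qf$; to this I would add the observation that $\hat{T}^\star$ is \emph{finitary}. This is immediate from the recursive construction of $\Fg^\star(Q)$ in proposition~\ref{prop: free-inv-omega-cat}: it involves only operations of finite arity (the partial binary compositions $\circ^k_p$, the nullary identities $\iota^k$ and the unary involutions $\gamma_q$), so $\Mg(Q)$, the congruence $\Xi$ and the quotient $\Fg^\star(Q)=\Mg(Q)/\Xi$ are all filtered colimits of finite stages, and hence $\hat{T}^\star$ preserves filtered colimits of globular $\omega$-sets.

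With this in hand I would re-run, using remark~\ref{rem: w-inv-omega}, the lemmata underlying Theorem~\ref{prop: L}: (i) $\Qf^{\hat{T}^\star}_\bullet$ is a presheaf category, hence locally finitely presentable; (ii) the forgetful functor $\Ug^\star_\Of\colon\Of^{\hat{T}^\star}_\bullet\to\Qf^{\hat{T}^\star}_\bullet$ has a left adjoint, is monadic and is finitary, by the lemma on finitary Cartesian monads applied to $\hat{T}^\star$; (iii) the forgetful functor $\Ug^\star_\kappa\colon\Qf^{\hat{T}^\star,\kappa}_\bullet\to\Qf^{\hat{T}^\star}_\bullet$ has a left adjoint, is monadic and is finitary. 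Point (iii) is the only step that must be re-derived rather than merely quoted: the free contracted $\hat{T}^\star$-collection over a $\hat{T}^\star$-collection $Q\xrightarrow{\pi}\hat{T}^\star(\bullet)$ is built by the same stage-by-stage recursion as in Leinster's appendix~G, at each stage freely adjoining a cell $\kappa(x^+,y,x^-)$ with prescribed source $x^-$, target $x^+$ and projection $y$ for each triple of $\Par(\pi)$ not yet filled; the combinatorics of $\Par(\pi)$ depends only on the globular structure of $Q$ and of the base object, not on \emph{which} monad produced $\hat{T}^\star(\bullet)$, so finitariness and monadicity of the resulting adjunction carry over as before.

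Finally I would take the strict pull-back over $\Qf^{\hat{T}^\star}_\bullet$ of the two forgetful functors $\Ug^\star_\Of$ and $\Ug^\star_\kappa$ — this is precisely the commuting square recorded in proposition~\ref{prop: all-*-cat}, which one readily checks is a strict pull-back, since an object of $\Of^{\hat{T}^\star,\kappa}_\bullet$ is exactly a $\hat{T}^\star$-operad and a contracted $\hat{T}^\star$-collection sharing the same underlying $\hat{T}^\star$-collection — and invoke Kelly's lemma~\ref{lem: kelly} with $\Xs:=\Qf^{\hat{T}^\star}_\bullet$, $\Ag:=\Ug^\star_\Of$ and $\Bg:=\Ug^\star_\kappa$. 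By (i)--(iii) all hypotheses of that lemma are met, so the composite $\Of^{\hat{T}^\star,\kappa}_\bullet\to\Qf^{\hat{T}^\star}_\bullet$ is monadic, and hence, by the final assertion of lemma~\ref{lem: kelly}, $\Of^{\hat{T}^\star,\kappa}_\bullet$ has an initial object — concretely the value at the initial $\hat{T}^\star$-collection $\emptyset\hookrightarrow\hat{T}^\star(\bullet)$ of the left adjoint to that composite.

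The step I expect to be the main obstacle is point (iii): confirming that the free--forgetful adjunction between contracted $\hat{T}^\star$-collections and $\hat{T}^\star$-collections is finitary and monadic with $\hat{T}^\star(\bullet)$ as base object in place of $\hat{T}(\bullet)$. Conceptually nothing new happens, but for the argument to be self-contained — and because the later discussion of $\hat{T}^\star$-operadic contractions needs an explicit grip on the initial object — it seems preferable to construct it directly: build by recursion a \emph{free contracted $\hat{T}^\star$-operadic magma} over a $\hat{T}^\star$-collection, simultaneously adjoining formal operadic units, formal operadic multiplications and formal contraction cells compatible with the source/target/projection data, and then quotient by the congruence generated by the monadic axioms~\eqref{eq: monadic-ax} together with the contraction identities~\eqref{eq: L-con}. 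This is the content of theorem~\ref{th: free-c-T*}, and evaluating the resulting left adjoint at the initial $\hat{T}^\star$-collection yields the required initial object of $\Of^{\hat{T}^\star,\kappa}_\bullet$ without any recourse to local presentability.
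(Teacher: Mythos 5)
Your proposal is correct, and the argument you actually commit to at the end is essentially the paper's own proof: reduce the theorem to the existence of a free contracted $\hat{T}^\star$-operad over a $\hat{T}^\star$-collection (theorem~\ref{th: free-c-T*}, built as a free contracted operadic magma quotiented by the congruence generated by the monadic axioms), and then obtain the initial object as the value of the resulting left adjoint at the empty $\hat{T}^\star$-collection, since left adjoints preserve initial objects. The Leinster-style route you sketch first (local finite presentability of $\Qf^{\hat{T}^\star}_\bullet$, finitary monadic forgetful functors, Kelly's lemma applied to the strict pull-back square) is precisely the alternative the paper declines to follow but records as viable in the remark after theorem~\ref{th: free-c-T*}; the only cosmetic quibble is that the contraction identities~\eqref{eq: L-con} need not be imposed by the quotient, since the freely adjoined contraction cells already carry the prescribed source, target and projection by construction.
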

\begin{proof}
Instead of following Leinster's original line of proof in section~\ref{sec: L-w-T}, we give a direct argument. 
\begin{itemize}
\item[$\blacktriangleright$] 
The category $\Qf^{\hat{T}^\star}_\bullet$ has an initial object $I$: the empty $\hat{T}^\star$-collection $I\xrightarrow{\pi}\hat{T}^\star(\bullet)$ given by $I^n:=\varnothing$, for all $n\in \NN$, where all the source/target maps and the projection $\pi$ are empty functions. 
\item[$\blacktriangleright$] 
Left-adjoint functors preserve colimits (see~\cite[theorem 4.5.3]{Ri16}) and hence they preserve initial objects (that are colimits of the empty diagram). 
\item[$\blacktriangleright$] 
Hence, if $\Ug^\star_\Of\circ\hat{\Ug}^\star_\kappa=\Ug_\kappa^\star\circ\hat{\Ug}^\star_\Of$ has a left-adjoint $\Qf^{\hat{T}^\star}_\bullet\xrightarrow{\Lg}\Of^{\hat{T}^\star,\kappa}_\bullet$, the object $L^\star:=\Lg^0(I)$ is initial in $\Of^{\hat{T}^\star,\kappa}_\bullet$. 
\end{itemize}
The theorem is now reduced to providing the existence of a \textit{free contracted $\hat{T}^\star$-operad over a $\hat{T}^\star$-collection}. This is achieved below, in the theorem~\ref{th: free-c-T*}, by an argument substantially similar to that used in our construction of the free self-dual Penon's contractions in~\cite[proposition~3.3]{BeBe17}. 
\end{proof}

First we need to define suitable free structures over $\hat{T}^\star$-collections. \footnote{
Of course the definitions, that for convenience are here stated for the specific case of $\hat{T}^\star$, work for any Cartesian monad $T$. 
} 
\begin{definition} \label{def: free-c-T*}
A \emph{free globular contracted $\hat{T}^\star$-operad over the $\hat{T}^\star$-collection} $Q:=Q\xrightarrow{\pi}\hat{T}^\star(\bullet)$ consists of a contracted $\hat{T}^\star$-operad $P:=(P,\pi_P,\kappa_P,\mu_P,\eta_P)$ and a morphism $Q\xrightarrow{\zeta}\Ug_\kappa^\star\circ\hat{\Ug}_\Of^\star(P)$ in $\Qf^{\hat{T}^\star}_\bullet$ that satisfies the following universal factorization property: 
for every other morphism $Q\xrightarrow{\phi}\Ug_\kappa^\star\circ\hat{\Ug}_\Of^\star(\hat{P})$ in $\Qf^{\hat{T}^\star}_\bullet$, where $\hat{P}$ is another contracted $\hat{T}^\star$-operad, 
there exists a unique morphism $P\xrightarrow{\hat{\phi}}\hat{P}$ in $\Of^{\hat{T}^\star,\kappa}_\bullet$ such that $\phi=\hat{\phi}\circ\zeta$. 

\medskip 

A \emph{free globular contracted $\hat{T}^\star$-operadic magma} $M:=(M,\pi_M,\kappa_M,\mu_M,\eta_M)$ over a $\hat{T}^\star$-collection, is defined in a similar way, via a morphism $Q\xrightarrow{\xi_Q}\Ug_\kappa^\star\circ\hat{\Ug}_\Mf^\star(M)$, substituting ``operads'' with ``operadic magmas'' above. 
\end{definition}

As any universal factorization property construct, free contracted $\hat{T}^\star$-operad(ic magma)s are unique, modulo a unique isomorphism compatible with the universal factorization property. 
Existence is shown in theorem~\ref{th: free-c-T*}. 

\medskip 

Before embarking on the proof, we need to introduce relevant notions of \textit{congruence} and \textit{quotient structure}. 
\begin{definition}
A \emph{globular $\omega$-equivalence relation} is an equivalence relation $\E$ in a globular $\omega$-set $Q$ that is \textit{graded} \footnote{
This is equivalent to say that $\E$ consists of a sequence $\E^n\subset Q^n\times Q^n$ of equivalence relations in $Q^n$, for all $n\in\NN$. 
} 
$\E\subset Q\times_\NN Q:=\{(x,y)\in Q\times Q \ | \ \exists n\in\NN \st x,y\in Q^n\}$ and source/target preserving: \footnote{
In this way, the globular source and target product maps $(s,s),(t,t):Q\times_\NN Q\to Q\times_\NN Q$ restrict to (necessarily globular) source and target maps on $\Es$ and hence $(\E,(s,s)|_\E^\E,(t,t)|_\E^\E)$ becomes a globular $\omega$-set canonically included in $Q\times_\NN Q$. 
}
\begin{equation}\label{eq: c-st}
(x_1, x_2)\in\E\imp (s(x_1),s(x_2))\in\E, \quad  \quad 
(x_1, x_2)\in\E\imp (t(x_1),t(x_2))\in\E, \quad \quad \forall x,y\in Q. 
\end{equation}

A \emph{$\hat{T}^\star$-collection congruence} is a globular $\omega$-equivalence relation $\E$ in the globular $\omega$-set $Q$ of a globular $\hat{T}^\star$-collection $Q\xrightarrow{\pi}\hat{T}^\star(\bullet)$ that is projection-preserving: 
$(x,y)\in\E\imp \pi(x)=\pi(y)$ and hence $\E\subset Q\times_\pi Q$. \footnote{
This means that the globular $\omega$-set $(\E,(s,s),(t,t))$ is a $\hat{T}^\star$-collection $\E\xrightarrow{\pi_\E}\hat{T}^\star(\bullet)$ with projection $\pi_\E:(x,y)\mapsto \pi(x)=\pi(y)$; furthermore, we have a canonical inclusion morphism of $\hat{T}^\star$-collections $\E\xrightarrow{\varepsilon_\E} Q\times_\pi Q$ into the \emph{product $\hat{T}^\star$-collection $Q\times_\pi Q\xrightarrow{(\pi,\pi)}\hat{T}^\star(\bullet)$}. 
} 

\medskip 

A \emph{congruence of contracted $\hat{T}^\star$-collection} is a $\hat{T}^\star$-collection congruence $\E$ in a contracted $\hat{T}^\star$-collection $\Par(\pi)\xrightarrow{\kappa}Q\xrightarrow{\pi}\hat{T}^\star(\bullet)$ that is also contraction preserving: \footnote{
This entails that the $\hat{T}^\star$-collection $\Es\xrightarrow{\pi_\E}\hat{T}^\star(\bullet)$ has a contraction $\kappa_\E:\Par(\pi_\E)\to\E$ given by $\kappa_\E:=(\kappa,\kappa)$. 
}
\begin{gather} \label{eq: cg-cont} 
\forall (x^+_1,x_2^+),(x_1^-,x_2^-)\in\E \st 
(x_1^+,y_1,x_1^-),(x_2^+,y_2,x_2^-)\in\Par(\pi) \imp (\kappa(x_1^+,y_1,x_1^-),\kappa(x_2^+,y_2,x_2^-))\in\E. 
\end{gather}

\medskip 

A \emph{congruence of (contracted) $\hat{T}^\star$-operadic magma} $(M,\mu_M,\eta_M)$ is a congruence $\E$ of the underlying (contracted) $\hat{T}^\star$-collection $\Par(\pi)\xrightarrow{\kappa_M}M\xrightarrow{\pi_M}\hat{T}^\star(\bullet)$ 
that is unit and multiplication preserving: \footnote{
The congruence $\E$ is always unit preserving since the product morphism 
$\bullet\ \times_{\eta_{\hat{T}^\star}}\bullet\xrightarrow{(\eta_M,\eta_M)} M\times_{\pi_M}M$ has always image inside $\E$; hence $\E$ is equipped with a canonical unit $\bullet\xrightarrow{\eta_\E}\E$ given by composing $\bullet\xrightarrow{!}\bullet\ \times_{\eta_{\hat{T}^\star}}\bullet\xrightarrow{(\eta_M,\eta_M)|^\E}\E$ with isomorphism $!$ of terminal objects. 
} 

\begin{gather}
\label{eq: cg-mu}
(x,y)\in \E\circ^1_0\E=\E\times_{\hat{T}^\star(\bullet)}\hat{T}^\star(\E) \quad \imp \quad 
\left((\mu_M,\mu_M)\circ \tau_M\circ(\varepsilon\circ^1_0\varepsilon)\right)(x,y)\in\Es,
\end{gather}
where 
$\E\circ^1_0\E \xrightarrow{\varepsilon\circ^2_0\varepsilon} (M\times_{\pi_M}M)\circ^1_0(M\times_{\pi_M}M) \xrightarrow{\tau_M}(M\circ^1_0M)\times_{\pi_M\circ^2_0\pi_M}(M\circ^1_0M) \xrightarrow{(\mu_M,\mu_M)}M\times_{\pi_M}M$ with $\tau_M$ denoting the canonical isomorphism of $\hat{T}^\star$-collections between pull-back of products and product of pull-backs in $\Qf^{\hat{T}^\star}_\bullet$ and $!$ as the unique isomorphism of terminal objects.~\footnote{
In this way, denoting by $\E\circ^1_0\E\xrightarrow{\mu_\E}\E$ the restiction $\mu_\E:=(\mu_M,\mu_M)\circ \tau_M\circ(\varepsilon\circ^1_0\varepsilon)|^\E_{\E\circ^1_0\E}$ and by $\bullet\xrightarrow{\eta_\E}\E$ 
the restriction $\eta_\E:=(\eta_M,\eta_M)\circ !|_\bullet^\E$, 
we have that $(\E,\eta_\E,\mu_\E)$ is itself a (contracted) $\hat{T}^\star$-operadic magma and that the inclusion $\E\xrightarrow{\varepsilon}M\times_{\pi_M}M$ is a morphisms of (contracted) $\hat{T}^\star$ operadic magmas. Whenever $(M,\pi_M,\kappa_M,\eta_M,\mu_M)$ is a (contracted) $\hat{T}^\star$-operad, also $(\E,\pi_\E,\kappa_\E,\eta_\E,\mu_\E)$ is. 
} 
\end{definition}
The previous congruences have been defined, for our convenience, for 1-cells in $\Qf^{\hat{T}^\star}_\bullet$, but they actually work for 1-cells in the bicategory $\Qf_T$, where $T$ is any Cartesian monad on the category of globular $\omega$-sets $\Qf$. 

\medskip 

As usual, congruences produce quotients of the corresponding algebraic structures. 
\begin{proposition}\label{prop: q-cong}

Given a globular $\omega$-relation $\E\subset Q\times_\NN Q$ on a globular $\omega$-set $Q$, the family of quotients $Q/\E:=(Q^n/\E^n)_{n\in\NN}$ becomes a \textit{globular $\omega$-set} with well-defined sources and targets $s_{Q/\E}([x]_\E):=[s_Q(x)]$, $t_{Q/\E}([x]_\E):=[t_Q(x)]_\E$; furthermore the quotient map $Q\xrightarrow{\varpi_\E}Q/\E$, defined as usual by $\varpi_\E:x\mapsto [x]_\E$, is a morphism of globular $\omega$-sets. 

\medskip 

Given a congruence $\E\subset Q\times_\pi Q$ of (contracted) $\hat{T}^\star$-collection $Q\xrightarrow{\pi}\hat{T}^\star(\bullet)$, the quotient globular $\omega$-set $Q/\E$ becomes a (contracted) $\hat{T}^\star$-collection $Q/\E\xrightarrow{\pi_{Q/\E}}\hat{T}^\star(\bullet)$ with well-defined projection $\pi_{Q/\E}:[x]_\E\mapsto \pi(x)$ (and with contraction $\kappa_{Q/\E}:([x^+]_\E,y,[x^-]_\E)\mapsto [\kappa(x^+,y,x^-)]_\E$ on $\Par(\pi_{Q/\E})=\left\{([x^+]_\E,y,[x^-]_\E) \ | \ (x^+,y,x^-)\in\Par(\pi)\right\}$); furthermore the quotient map $Q\xrightarrow{\varpi_\E}Q/\E$ is a morphism of (contracted) $\hat{T}^\star$-collections. 

\medskip 

Given a congruence $\E\subset M\times_{\pi_M}M$ of (contracted) $\hat{T}^\star$-operadic magma on $(M,\pi_M,\eta_M,\mu_M)$, the quotient (contracted) $\hat{T}^\star$-collection $M/\E\xrightarrow{\pi_{M/\E}}\hat{T}^\star(\bullet)$ becomes a (contracted) $\hat{T}^\star$-operadic magma with operadic unit $\bullet\xrightarrow{\eta_{M/\E}}M/\E$ given by $\eta_{M/\E}:=\pi_{\E}\circ\eta_M$ and 
with operadic multiplication $M/\E\circ^1_0M/\E\xrightarrow{\mu_{M/\E}}M/\E$ that is well-defined by 
$\mu_{M/\E}:=([x]_\E,[y]_\E)\mapsto [\mu_M(x,y)]_\E$, for all $(x,y)\in M\circ^1_0M=M\times_{\hat{T}^\star(\bullet)}\hat{T}^\star(M)$; furthermore the quotient map $M\xrightarrow{\varpi_\E}M/\E$ is a morphism of (contracted) $\hat{T}^\star$-operadic magmas. 

\medskip 

Given a morphism $Q_1\xrightarrow{\phi}Q_2$ of the categories in proposition~\ref{prop: all-*-cat}, a congruence $\E$ of the respective type in $Q_2$ naturally induces a congruence, of the same type, $\E_\phi:=\big\{(x,y)\in Q_1\times Q_1 \ | \ (\phi(x),\phi(y))\in\E \big\}$ in $Q_1$. 
Furthermore if $\E_1$ is a congruence of the respective type in $Q_1$ such that $\E_1\subset\E_\phi$, there exists a unique well-defined quotient morphism $Q_1/\E_1\xrightarrow{\hat{\phi}}Q_2$ given by $\hat{\phi}:[x]_{\E_1}\mapsto\phi(x)$ such that $\phi=\hat{\phi}\circ\pi_{\E_1}$. 
\end{proposition}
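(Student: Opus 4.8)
The plan is to treat the four assertions of Proposition~\ref{prop: q-cong} in turn; in each case I would write down the candidate quotient structure by the stated formulas, check it is well defined (this is precisely where the defining conditions of the respective congruences are consumed), verify the axioms of the structure in question, and note that the quotient map $\varpi_\E$ is a morphism of that type essentially by construction.

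For a globular $\omega$-relation $\E\subseteq Q\times_\NN Q$, since $\E$ is graded the levelwise quotients $Q^n/\E^n$ make sense, and the source/target preservation~\eqref{eq: c-st} is exactly what is needed for $s_{Q/\E},t_{Q/\E}$ to be well defined on classes; the globularity identities of Definition~\ref{def: omega-cat} for $Q/\E$ follow by applying the surjection $\varpi_\E$ to those of $Q$, and $\varpi_\E$ is then a morphism of globular $\omega$-sets by the very definition of $s_{Q/\E},t_{Q/\E}$. For a congruence $\E\subseteq Q\times_\pi Q$ of a (contracted) $\hat{T}^\star$-collection, projection-invariance makes $\pi_{Q/\E}\colon[x]_\E\mapsto\pi(x)$ well defined with $\pi_{Q/\E}\circ\varpi_\E=\pi$; in the contracted case one first identifies $\Par(\pi_{Q/\E})$ with the set of triples $([x^+]_\E,y,[x^-]_\E)$ for $(x^+,y,x^-)\in\Par(\pi)$, using Lemma~\ref{lem: parallel} applied to $\varpi_\E$ together with levelwise surjectivity of $\varpi_\E$, and then~\eqref{eq: cg-cont} is exactly the statement that $\kappa_{Q/\E}\colon([x^+]_\E,y,[x^-]_\E)\mapsto[\kappa(x^+,y,x^-)]_\E$ is independent of representatives; the three contraction identities~\eqref{eq: L-con} for $\kappa_{Q/\E}$, and its compatibility with $\varpi_\E$, follow by applying $\varpi_\E$ to those for $\kappa$.

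For a congruence $\E\subseteq M\times_{\pi_M}M$ of a (contracted) $\hat{T}^\star$-operadic magma the unit $\eta_{M/\E}:=\varpi_\E\circ\eta_M$ is a $2$-cell by the previous step. The delicate point, and the one I expect to be the main obstacle, is the multiplication: its domain is the horizontal composite $M/\E\circ^1_0 M/\E=(M/\E)\times_{\hat{T}^\star(\bullet)}\hat{T}^\star(M/\E)$ formed in $\Qf_{\hat{T}^\star}$, so one must relate $\hat{T}^\star(M/\E)$ to $\hat{T}^\star(M)$. Here I would use the explicit recursive description of $\Fg^\star$ from Proposition~\ref{prop: free-inv-omega-cat} (or, equivalently, that $\hat{T}^\star=\Ug^\star\circ\Fg^\star$ with $\Fg^\star$ a left adjoint and epimorphisms in $\Qf,\Cf^\star$ being the levelwise surjections) to conclude that $\hat{T}^\star(\varpi_\E)$, hence the induced map $M\circ^1_0 M\to(M/\E)\circ^1_0(M/\E)$, is a levelwise surjection: every element of $(M/\E)\circ^1_0(M/\E)$ is of the form $([x]_\E,\hat{T}^\star(\varpi_\E)(\tilde y))$ with $(x,\tilde y)\in M\circ^1_0 M$. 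Condition~\eqref{eq: cg-mu} (read off via the isomorphism $\tau_M$ between pullbacks of products and products of pullbacks) says precisely that $[\mu_M(x,\tilde y)]_\E$ is independent of the lift, which defines $\mu_{M/\E}$. That $\mu_{M/\E}$ sits over $\mu_{\hat{T}^\star(\bullet)}$, that $\varpi_\E$ intertwines $\mu_M$ with $\mu_{M/\E}$ and $\eta_M$ with $\eta_{M/\E}$, and, in the contracted case, the compatibility of $\mu_{M/\E},\eta_{M/\E}$ with $\kappa_{M/\E}$, are all obtained by applying the surjections $\varpi_\E$ and $\hat{T}^\star(\varpi_\E)$ to the corresponding identities for $M$; and since the monadic axioms~\eqref{eq: monadic-ax} are equational they pass to the quotient through the epimorphism $\varpi_\E$, so $M/\E$ is a (contracted) $\hat{T}^\star$-operad whenever $M$ is.

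Finally, for a morphism $\phi\colon Q_1\to Q_2$ in one of the categories of Proposition~\ref{prop: all-*-cat} and a congruence $\E$ on $Q_2$ of the matching type: $\E_\phi$ is graded and source/target-preserving because $\phi$ is a morphism of globular $\omega$-sets and $\E$ is, and likewise projection-invariance, \eqref{eq: cg-cont} and \eqref{eq: cg-mu} for $\E_\phi$ follow from the corresponding properties of $\E$ together with the fact that $\phi$ commutes with projections, contractions, units and multiplications. If $\E_1\subseteq\E_\phi$ then $\varpi_\E\circ\phi$ is constant on $\E_1$-classes, so it factors uniquely through $\varpi_{\E_1}$ as $\hat\phi\colon[x]_{\E_1}\mapsto[\phi(x)]_\E$ (recovering the stated map into $Q_2$ when $\E$ is the diagonal congruence, so that $\E_\phi=\ker\phi$); uniqueness of $\hat\phi$ is immediate since $\varpi_{\E_1}$ is a levelwise surjection, and $\hat\phi$ is a morphism of the appropriate type because $\varpi_{\E_1}$ is an epimorphism in that category and $\varpi_\E\circ\phi$ is. Apart from the lifting argument for $\mu_{M/\E}$, every verification above is a routine diagram chase, so I would present those compactly and devote the bulk of the write-up to the surjectivity of $\hat{T}^\star(\varpi_\E)$ and the well-definedness of $\mu_{M/\E}$.
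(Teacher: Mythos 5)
Your proposal is correct and follows essentially the same route as the paper: levelwise quotients whose well-definedness is exactly the content of the congruence conditions \eqref{eq: c-st}, \eqref{eq: cg-cont}, \eqref{eq: cg-mu}, with the operadic multiplication handled by lifting elements of $(M/\E)\circ^1_0(M/\E)$ along the quotient (the paper packages your surjectivity of $\hat{T}^\star(\varpi_\E)$ into an asserted ``canonical exchange isomorphism'' $(M/\E)\circ^1_0(M/\E)\cong(M\circ^1_0M)/(\E\circ^1_0\E)$), and the final factorization obtained by uniqueness through the epimorphism $\varpi_{\E_1}$. Your explicit attention to the surjectivity of $\hat{T}^\star(\varpi_\E)$ and to the codomain of $\hat\phi$ is, if anything, slightly more careful than the paper's own treatment.
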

\begin{proof}
By property~\eqref{eq: c-st}, for all $n\in\NN$, the source and target $s^n_{Q/\E},t^n_{Q/\E}:Q^{n+1}/\E^{n+1}\to Q^{n}/\E^{n}$ are well-defined and their globularity property follows from the globularity of $Q$. 
For all $n\in\NN$, the quotient map $\varpi^n_{\E}:Q^n\to Q^n/\E^n$ is well-defined by $x\mapsto [x]_{\E^n}$; and $\varpi_\E:=(\pi^n_{\E})_\NN:Q\to Q/\E$ becomes a morphism in $\Qf$ since 
$s^{n}_{Q/\E}(\varpi^{n+1}_\E(x))=s^{n}_{Q/\E}([x]_{\E^{n+1}})=[s^n_Q(x)]_{\E^{n}}=\varpi^n_\E(s_Q^n(x))$, for all $x\in Q^{n+1}$ and similarly for targets. 

\medskip 

A congruence $\E\subset Q\times_\pi Q$ of a collection $Q\xrightarrow{\pi}\hat{T}^\star(\bullet)$ is necessarily a congruence of $\omega$-globular sets, hence we already have a quotient morphism $Q\xrightarrow{\varpi_\E} Q/\E$ in $\Qf$. Since $\E\subset Q\times_\pi Q$, the assignment $[x]_\E\mapsto \pi(x)$ is a well-defined graded function $\pi_{Q/\E}:Q/\E\to \hat{T}^\star(\bullet)$ that is actually a morphism of globular $\omega$-sets in $\Qf$: for all $[x]_\E\in Q/\E$,   $\pi_{Q/\E}(s_{Q/\E}[x]_\E)=\pi_{Q/\E}([s_Q(x)]_\Es)=\pi(s_Q(x))=s(\pi(x)) =s_{\hat{T}^\star(\bullet)}(\pi_{Q/\E}[x]_\E)$ and similarly for targets. 
Since $\pi_{Q/\E}\circ\varpi_\E(x)=\pi_{Q/\E}([x]_\E)=\pi(x)$, for all $x\in Q$, the quotient $Q\xrightarrow{\varpi_{\E}}Q/\E$ is a morphism of $\hat{T}^\star$-collections in $\Qf^{\hat{T}^\star}_\bullet.$

\medskip 

Suppose now that $\Par(\pi)\xrightarrow{\kappa}Q\xrightarrow{\pi}\hat{T}^\star(\bullet)$ is a contracted $\hat{T}^\star$-collection and $\E\subset Q\times_\pi Q$ is a congruence of contracted $\hat{T}^\star$-collections. For the quotient $\hat{T}^\star$-collection $Q/\E\xrightarrow{\pi_{Q/\E}}\hat{T}^\star(\bullet)$ we have that $[x_1]_\E,[x_2]_\E\in Q/\E$ are $\pi_{Q/\E}$-parallel if and only if $x_1,x_2$ are $\pi$-parallel and hence $\Par(\pi_{Q/\E})=\big\{([x^+]_\E,y,[x^-]_\E) \ | \ (x^+,y,x^-)\in\Par(\pi)\big\}$. 
Furthermore, since $\Par_{\pi_{Q/\E}}:(x^+,y,x^-)\mapsto ([x^+],y,[x^-])$ there exists a unique relation $\Par(\pi_{Q/\E})\xrightarrow{\kappa_{Q/\E}}Q/\E$ that satisfies $\Par_{\pi_\E}\circ\kappa_{Q/\E}=\kappa$ and that is given by $\kappa_{Q/\E}([x^+]_\E,y,[x^-]_\E):=[\kappa(x^+,y,x^-)]_\E$.  
Since $\E$ is a congruence of contracted $\hat{T}^\star$-collections, from equation~\eqref{eq: cg-cont} we see that $\kappa_{Q/\E}$ is actually a well-defined function and hence a contraction on $Q/\E$ and $Q\xrightarrow{\varpi_{\E}}Q/\E$ is a morphism in $\Qf^{\hat{T}^\star,\kappa}_\bullet$. 

\medskip 

Let $\E$ be a congruence of the (contracted) operadic magma $(M,\pi_M,\kappa_M,\eta_M,\mu_M)$. We already know that the quotient $(M/\E,\pi_{M/\E},\kappa_{M/\E})$ is a (contracted) $\hat{T}^\star$-collection and that $M\xrightarrow{\varpi_\E}M/\E$ is a morphism in $\Qf^{\hat{T}^\star,\kappa}_\bullet$. 

The operadic unit map $\eta_{M/\E}:\bullet\mapsto [\eta_M(\bullet)]_\E$ is well defined, and we immediately get $\eta_{M/\E}=\pi_{M/\E}\circ\eta_M$, hence the quotient morphism $\pi_{M/\E}$ preserves units. 

To describe the operadic multiplication, we first notice that we have a canonical exchange isomorphism $(M/\E)\circ^1_0(M/\E)\xrightarrow{\chi}(M\circ^1_0M)/(\E\circ^1_0\E)$; any multiplication morphism $(M/\E)\circ^1_0(M/\E)\xrightarrow{\mu_{M/\E}}M/\E$ such that $\mu_{M/\E}\circ(\varpi_{\E}\circ^2_0\varpi_E)=\varpi_\E\circ\mu_M$, must necessarily be given by $\mu_{M/\E}:=\hat{\mu}_{M/\E}\circ\chi$ where, using the congruence property~\eqref{eq: cg-mu} of $\E$, we have that $\hat{\mu}_{M/\E}:[(x,y)]_{\E\circ^1_0\E}\mapsto\varpi_\E\circ\mu_M(x,y)$, for $(x,y)\in M\circ^1_0M$, is a well-defined morphism of (contracted) $\hat{T}^\star$-collections. 
Since $\mu_{M/\E}\circ(\varpi_{\E}\circ^2_0\varpi_E)=\hat{\mu}_{M/\E}\circ\chi\circ(\varpi_{\E}\circ^2_0\varpi_E)
=\hat{\mu}_{M/\E}\circ \varpi_{\E\circ^1_0\E}=\varpi_\E\circ\mu_M$, we see that $\pi_\E$ is a morphism of (contracted) $\hat{T}^\star$-operadic magmas.

\medskip 

The family $\E_\phi\subset Q_1\times Q_1$ is an equivalence relation in $Q_1$ and, since $\phi$ is grade-preserving, we also have $\E_\phi\subset Q_1\times_\NN Q_1$ and hence $\E_\phi$ consists of a family of equivalence relations $\E_\phi^n\subset Q_1^n\times Q_1^n$, for all $n\in\NN$. 
Since $\phi$ is always a morphism of globular $\omega$-sets, $s_{Q_2}^n\circ\phi^{n+1}=\phi^n\circ s_{Q_1}^n$ (and similarly for targets), for all $n\in\NN$, and hence property~\eqref{eq: c-st} holds and $\E_\phi$ is a globular $\omega$-equivalence relation in $Q_1$. 

If $Q_1\xrightarrow{\phi}Q_2$ is a morphism of $\hat{T}^\star$-collections in $\Qf^{\hat{T}^\star}_\bullet$, $(x,y)\in\E_\phi \imp \pi_{Q_2}(\phi(x))=\pi_{Q_2}(\phi(y))\imp \pi_{Q_1}(x)=\pi_{Q_1}(y)$ and hence $\E_\phi\subset Q_1\times_{\pi_{Q_1}}Q_1$ is a congruence of $\hat{T}^\star$-collections in $Q_1$.

Whenever $\phi$ is a morphism of contracted $\hat{T}^\star$-collections in  $\Qf^{\hat{T}^\star,\kappa}_\bullet$, given $(x^+_1,y_1,x^-_1),(x^+_2,y_2,x^-_2)\in\Par(\pi_1)$ with $(x^+_1,x^+_2),(x^-_1,x^-_2)\in\E_\phi$, we must show that $(\kappa_1(x^+_1,y_1,x^-_1),\kappa_1(x^+_2,y_2,x^-_2))\in\E_\phi$ that is equivalent to show 
$(\kappa_2\circ\Par_\phi(x^+_1,y_1,x^-_1),\kappa_2\circ\Par(x^+_1,y_1,x^-_1))=(\phi\circ\kappa_1(x^+_1,y_1,x^-_1),\phi\circ\kappa_1(x^+_1,y_1,x^-_1))\in\E$ and this is true since we have $(\phi(x_1^+),y_1,\phi(x_1^-)),(\phi(x_2^+),y_2,\phi(x_2^-))\in\Par(\pi_2)$ and $(\phi(x_1^+),\phi(x_2^+)),(\phi(x_1^-),\phi(x_2^-))\in\E$ and hence also 
$(k_2(\phi(x_1)^+,y_1,\phi(x_1)^-),\kappa_2(\phi(x_2)^+,y_2,\phi(x_2)^-)) =(\kappa_2\circ\Par_\phi(x^+_1,y_1,x^-_1),\kappa_2\circ\Par(x^+_1,y_1,x^-_1))\in\E$. 
This shows that $\E_\phi$ is also a congruence of contracted $\hat{T}^\star$-collection in $Q_1$. 

Finally if $\E$ is a congruence of (contracted) $\hat{T}^\star$-operadic magma in $Q_2$ and $Q_1\xrightarrow{\phi}Q_2$ is a morphism in $\Mf^{\hat{T}^\star}_\bullet$: 
$(x,y)\in\E_\phi\circ^1_0\E_\phi\imp (\phi\circ^2_0\phi)(x,y)\in\E\circ^1_0\E\imp 
\mu_\E\circ(\phi\circ^2_0\phi)(x,y)\in\E\imp (\phi\circ^2_0\phi)(\mu_\E(x,y))\in\E\imp \mu_\E(x,y)\in\E_\phi$, hence $\E_\phi$ is a congruence $\hat{T}^\star$-operadic magmas. The same argument assures that, if $\phi$ is a morphism in $\Mf^{\hat{T}^\star,\kappa}_\bullet$, $\E_\phi$ is a congruence of contracted $\hat{T}^\star$-operadic magmas. 

\medskip 

Finally, whenever $\E_1\subset\E_\phi$ is a congruence (of the ``same type'' of the morphism $\phi$) in $Q_1$, any relation $Q_1/\E_1\xrightarrow{\hat{\phi}}Q_2$ such that $\phi=\hat{\phi}\circ\pi_{\E_1}$ must necessarily associate $[x]_{\E_1}\mapsto\phi(x)$ and this is a well defined function since $[x]_{\E_1}\subset[x]_{\E_\phi}$ and hence $\phi(x)$ does not depend on the representative element.  

We must show that $Q_1/\E_1\xrightarrow{\hat{\phi}}Q_2$ is a morphism in the same category of $\phi$. 

From its definition $\hat{\phi}$ is already a graded map: $\hat{\phi}(Q_1^n/\E^n_1)\subset Q_2^n$, for all $n\in\NN$. For all $x\in Q_1$, we have  $s_{Q_2}(\hat{\phi}([x]_{\E_1}))=s_{Q_2}(\phi(x))=\phi(s_{Q_1}(x))=\hat{\phi}([s_{Q_1}(x)]_{\E_1}) =\hat{\phi}(s_{Q_1/\E_1}([x]_{\E_1}))$ and similarly for the target; hence $\hat{\phi}$ is a morphism in $\Qf$. 
Since $\pi_{Q_2}(\hat{\phi}([x]_{\E_1}))=\pi_{Q_2}(\phi(x))=\pi_{Q_1}(x)=\pi_{Q_1/\E_1}([x]_{\E_1})$, for all $x\in Q_1$, we also have that $\hat{\phi}$ is a morphism in $\Qf^{\hat{T}^\star}_\bullet$. 
Since $\hat{\phi}$ is already a morphism of $\hat{T}^\star$-collections it naturally induces a map $\Par_{\hat{\phi}}:\Par(\pi_{Q_1/\E_1})\to\Par(\pi_{Q_2})$ and, whenever $\phi$ is a morphism in $\Qf^{\hat{T}^\star,\kappa}$, we show that $\kappa_{Q_2}\circ\Par_{\hat{\phi}}=\hat{\phi}\circ\kappa_{Q_1/\E_1}$: 
\begin{align*}
\kappa_{Q_2}(\Par_{\hat{\phi}}([x^+]_{\E_1},y,[x^-]_{\E_1}))
&=\kappa_{Q_2}(\Par_{\hat{\phi}}\circ\Par_{\pi_{\frac{Q_1}{\E_1}}}(x^+,y,x^-))
=\kappa_{Q_2}(\Par_\phi(x^+,y,x^-))
\\
&=\phi(\kappa_{Q_1}(x^+,y,x^-))=\hat{\phi}(\pi_{\frac{Q_1}{\E_1}}(x^+,y,x^-))
=\hat{\phi}(\kappa_{\frac{Q_1}{\E_1}}([x^+]_{\E_1},y,[x^-]_{\E_1})), 
\end{align*}
for all $(x^+,y,x^-)\in\Par(\pi_{Q_1})$, and hence $\hat{\phi}$ is a morphism in $\Qf^{\hat{T}^\star}_\bullet$. 

Supposing now that $Q_1\xrightarrow{\phi}Q_2$ is a morphism in $\Mf^{\hat{T}^\star}_\bullet$ and $\E_1$ is a congruence of $\hat{T}^\star$-operadic magmas, we have $\hat{\phi}\circ\eta_{Q_1/\E_1}=\hat{\phi}\circ\pi_{Q_1/\E_1}\circ\eta_{Q_1} =\phi\circ\eta_{Q_1}=\eta_{Q_2}$. 
As regards multiplication, from $\phi=\hat{\phi}\circ\pi_{Q_1/\E_1}$ we obtain $(\phi\circ^2_0\phi)=(\hat{\phi}\circ^2_0\hat{\phi})\circ(\pi_{Q_1/\E_1}\circ^2_0\pi_{Q_1/\E_1})$; since $\mu_{Q_2}\circ(\phi\circ^2_0\phi)=\phi\circ\mu_{Q_1}$ and $\mu_{Q_1/\E_1}\circ(\pi_{Q_1/\E_1}\circ^2_0\pi_{Q_1/\E_1})=\pi_{Q_1/\E_1}\circ\mu_{Q_1}$, we get: 
$\mu_{Q_2}\circ(\hat{\phi}\circ^2_0\hat{\phi})\circ(\pi_{Q_1/\E_1}\circ^2_0\pi_{Q_1/\E_1})
=\mu_{Q_2}\circ(\phi\circ^2_0\phi)=\phi\circ\mu_{Q_1}=\hat{\phi}\circ\pi_{Q_1/\E_1}\circ\mu_{Q_1}
=\hat{\phi}\circ\mu_{Q_1/\E_1}\circ(\pi_{Q_1/\E_1}\circ^2_0\pi_{Q_1/\E_1})$. Since $(\pi_{Q_1/\E_1}\circ^2_0\pi_{Q_1/\E_1})$ is an epimorphism, we finally have  
$\mu_{Q_2}\circ(\hat{\phi}\circ^2_0\hat{\phi})=\hat{\phi}\circ\mu_{Q_1/\E_1}$ and hence $\hat{\phi}$ is a morphism of (contracted) $\hat{T}^\star$-operadic magmas. 
\end{proof}

\begin{theorem} \label{th: free-c-T*}
A free contracted $\hat{T}^\star$-operad over a $\hat{T}^\star$-collection always exists. 
\end{theorem}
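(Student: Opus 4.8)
The plan is to follow the route announced in the proof of theorem~\ref{th: L*} and to carry out, inside the bicategory $\Qf_{\hat{T}^\star}$, the two-step pattern of the proof of proposition~\ref{prop: free-inv-omega-cat}: first construct by recursion a \emph{free contracted $\hat{T}^\star$-operadic magma} $M:=(M,\pi_M,\kappa_M,\eta_M,\mu_M)$ over the given $\hat{T}^\star$-collection $Q\xrightarrow{\pi}\hat{T}^\star(\bullet)$, together with the universal morphism $Q\xrightarrow{\xi_Q}\Ug_\kappa^\star\circ\hat{\Ug}_\Mf^\star(M)$ of definition~\ref{def: free-c-T*}; then obtain the free contracted $\hat{T}^\star$-operad as the quotient of $M$ by the smallest congruence of contracted $\hat{T}^\star$-operadic magmas that forces the operadic associativity and unitality axioms~\eqref{eq: monadic-ax}, using proposition~\ref{prop: q-cong} to recognise the quotient as a contracted $\hat{T}^\star$-operad and a kernel-congruence argument, as at the end of the proof of proposition~\ref{prop: free-inv-omega-cat}, to verify its universal property.

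For the construction of $M$ I would proceed dimension by dimension. As $\hat{T}^\star$ is degree-preserving, $\hat{T}^\star(M)^n$ depends only on $M$ in degrees $\le n$, so one may assume $M^{<n}$ already built with all its structure and then generate $M^n$ by an inner recursion: start from $Q^n$ together with the freely adjoined unit cell in degree $n$ and the freely adjoined contraction fillers $\kappa_M(x^+,y,x^-)$, one for each parallel pair $(x^+,x^-)$ in $M^{n-1}$ and each compatible $y\in\hat{T}^\star(\bullet)^n$ (with source $x^-$, target $x^+$ and $\pi_M$-image $y$, as dictated by~\eqref{eq: L-con}); then close under the operadic multiplication $\mu_M$ applied to all composable pairs in $M^n\times_{\hat{T}^\star(\bullet)^n}\hat{T}^\star(M)^n$, then under the fillers and multiplications the newly created cells make available, and iterate, setting $M^n$ to be the union over this countable process. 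Sources, targets, the projection $\pi_M$ and the contraction $\kappa_M$ of the freely added cells are pinned down along the way by the structural diagrams~\eqref{eq: op-mag} and~\eqref{eq: L-con}. The universal factorization property of $\xi_Q$ is then checked just as for the free globular self-dual reflexive $\omega$-magma in part~(a) of the proof of proposition~\ref{prop: free-inv-omega-cat}: any morphism $Q\xrightarrow{\phi}\Ug_\kappa^\star\circ\hat{\Ug}_\Mf^\star(\hat{M})$ into a contracted $\hat{T}^\star$-operadic magma $\hat{M}$ has a unique extension to a morphism of contracted $\hat{T}^\star$-operadic magmas, because its value on each generator is forced and the recursion propagates existence and uniqueness simultaneously.

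To pass to the operad I would use that the intersection of congruences of contracted $\hat{T}^\star$-operadic magmas is again one, so a smallest congruence $\Theta\subset M\times_{\pi_M}M$ of contracted $\hat{T}^\star$-operadic magmas exists which contains, for every configuration of composable cells, the pairs formed by the two sides of the associativity square and of the two unitality triangles of~\eqref{eq: monadic-ax}. By proposition~\ref{prop: q-cong} the quotient $P_Q:=M/\Theta$ is a contracted $\hat{T}^\star$-operadic magma whose unit and multiplication now satisfy~\eqref{eq: monadic-ax}, hence $P_Q\in\Of^{\hat{T}^\star,\kappa}_\bullet$; put $\zeta:=\varpi_\Theta\circ\xi_Q$. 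For any contracted $\hat{T}^\star$-operad $\hat{P}$ and morphism $Q\xrightarrow{\phi}\Ug_\kappa^\star\circ\hat{\Ug}_\Of^\star(\hat{P})$, the universal property of $M$ yields a unique morphism $M\xrightarrow{\bar{\phi}}\hat{P}$ of contracted operadic magmas with $\phi=\bar{\phi}\circ\xi_Q$; since $\hat{P}$ is an operad, the kernel congruence $\Theta_{\bar{\phi}}$ of $\bar{\phi}$ contains all generating pairs of $\Theta$, so $\Theta\subseteq\Theta_{\bar{\phi}}$, and the last clause of proposition~\ref{prop: q-cong} provides the unique $P_Q\xrightarrow{\hat{\phi}}\hat{P}$ in $\Of^{\hat{T}^\star,\kappa}_\bullet$ with $\phi=\hat{\phi}\circ\zeta$.

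The main obstacle I anticipate is the bookkeeping needed to make the recursive construction of $M$ genuinely well-founded. The delicate feature is that $\mu_M$ draws its inputs from $M\circ^1_0 M=M\times_{\hat{T}^\star(\bullet)}\hat{T}^\star(M)$, so $\hat{T}^\star$ applied to the very object under construction reappears inside the recursion; this forces the inner recursion within each degree $n$ to be run to a fixed point --- adjoin $\mu_M$-products, then the new cells of $\hat{T}^\star(M)^n$ these create, then further products and fillers, and so on --- and one must verify that it stabilises after countably many steps and that sources, targets, $\pi_M$ and $\kappa_M$ stay consistently defined so that the diagrams~\eqref{eq: op-mag} commute strictly. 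A secondary point deserving care is the verification that the chosen generating pairs really cut out an operad, i.e.\ that after quotienting \emph{every} instance of the associativity and unitality axioms holds --- including those in which $\mu_M$ is post-composed with an application of $\hat{T}^\star$ --- which is precisely where one uses that $\Theta$ is a congruence of \emph{operadic magmas} (compatible with $\mu$ and, via the $\circ^1_0$-composite, with $\hat{T}^\star$), not merely a graded equivalence relation.
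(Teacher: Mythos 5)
Your proposal follows essentially the same route as the paper's proof: a recursive, degree-by-degree construction of a free contracted $\hat{T}^\star$-operadic magma $\Mg(Q)$ (freely adjoining unit cells, contraction fillers over $\Par(\pi_{\Mg(Q)^{n-1}})$, and nested multiplication cells graded by depth), verification of its universal property by forced values on generators, and then a quotient by the smallest congruence of contracted $\hat{T}^\star$-operadic magmas containing the associativity/unitality pairs, with the universal property of the quotient obtained via the kernel congruence of the induced magma morphism exactly as you describe. The only detail worth flagging is that the existence of the smallest such congruence requires exhibiting at least one congruence containing the generating pairs; the paper does this by observing that $\Mg(Q)\times_{\pi_{\Mg(Q)}}\Mg(Q)$ is itself such a congruence, since $\hat{T}^\star(\bullet)$ is a (terminal) contracted $\hat{T}^\star$-operad.
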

\begin{proof}
We proceed with a low-tech quite long, but direct, iterative construction followed by a quotient. 
\begin{itemize}
\item[a.]
starting from a globular $\hat{T}^\star$-collection $Q\xrightarrow{\pi_Q}\hat{T}^\star(\bullet)$ a new globular $\hat{T}^\star$-collection $\Mg(Q)\xrightarrow{\pi_{\Mg(Q)}}\hat{T}^\star(\bullet)$ is inductively constructed together with a morphism $\xi_Q:Q\to\Mg(Q)$ of globular $\hat{T}^\star$-collections; 
\item[b.] 
we show that $\Mg(Q)\xrightarrow{\pi_{\Mg(Q)}}\hat{T}^\star(\bullet)$ can be equipped with a contraction $\kappa_{\Mg(Q)}:\Par(\pi_{\Mg(Q)})\to \Mg(Q)$, 
a unit $\eta_{\Mg(Q)}:\bullet\to \Mg(Q)$ and a multiplication $\mu_{\Mg(Q)}: \Mg(Q)\circ^1_0\Mg(Q)=\Mg(Q)\times_{\hat{T}^\star(\bullet)}\hat{T}^\star(\Mg(Q))\to\Mg(Q)$; 
\item[c.] 
it is shown that $Q\xrightarrow{\xi_{Q}}\Mg(Q)$ is a free contracted $\hat{T}^\star$-operadic-magma over $Q\xrightarrow{\pi_Q}\hat{T}^\star(\bullet)$; 
\item[d.]
we establish the existence of the smallest congruence $\E_\X$ generated by operadic associativity and unitality axioms on the contracted operadic $\hat{T}^\star$-magma $\Mg(Q)$, so that the quotient $\Mg(Q)\xrightarrow{\pi_{\E_\X}}\Pg(Q):=\Mg(Q)/\E_\X$ becomes a morphism of contracted operadic $\hat{T}^\star$-magmas onto a contracted $\hat{T}^\star$-operad $\Pg(Q)\xrightarrow{\pi_{\Pg(Q)}}\hat{T}^\star(\bullet)$ with 
operadic unit $\eta_{\Pg(Q)}: \bullet\to \Pg(Q)$ operadic multiplication $\mu_{\Pg(Q)}:\Pg(Q)\circ^1_0\Pg(Q)\to\Pg(Q)$ and contraction $\kappa_{\Pg(Q)}:\Par(\pi_{\Pg(Q)})\to\Pg(Q)$, as explained in proposition~\ref{prop: q-cong};   
\item[e.]
the universal factorization property for free contracted $\hat{T}^\star$-operads is proved for $Q\xrightarrow{\zeta_Q:=\pi_{\E_\X}\circ\xi_Q}\Pg(Q)$. 
\end{itemize}
a. 

We start by explicitly providing $\Mg(Q)^j\xrightarrow{\pi_{\Mg(Q)}^j}\hat{T}^\star(\bullet)^j$, for $j=0,1$.
Define $\Mg(Q)^0:=Q^0\biguplus Q_\eta^0$, disjoint union of $Q^0$ and a singleton $Q^0_\eta:=\{\bullet^0_\eta\}$, and  $\pi_{\Mg(Q)}^0:\Mg(Q)^0\to\hat{T}^\star(\bullet)^0=\{\bullet^0\}$ as the terminal map, necessarily coinciding with the terminal map $\pi_Q^0$ on $Q^0$ and given by $\bullet^0_\eta\mapsto\bullet^0$ on the singleton. 
Notice that, at this level-$0$ stage, no contraction-cells or operadic multiplication-cells are added, but only a ``free operadic unity-cell'' $\bullet^0_\eta$. 

Passing to the level-$1$, we define $\Mg(Q)^1[0]:=Q^1\biguplus Q^1_\eta\biguplus Q^1_\kappa$, 
where $Q^1_\eta:=\{\bullet^1_\eta\}$ is again a singleton (corresponding to a free operadic-unit 1-cell); $Q^1_\kappa:=\Par(\pi^0_{\Mg(Q)})$ consists of a copy of the set of free contraction 1-cells for the parallel $0$-cells induced by the projection $\pi_{\Mg(Q)}^0$ at the previous 0-level; 
we set the map $\pi_{\Mg(Q)}^1[0]:\Mg(Q)^1[0]\to\hat{T}^\star(\bullet)^1$ coinciding with $\pi_Q^1$ on $Q^1$, as a terminal map $\bullet^1_\eta\mapsto \bullet^1\in\hat{T}^\star(\bullet)^1$ on $Q^1_\eta$, and as the map $(x^+,y,x^-)\mapsto y$ on all $(x^+,y,x^-)\in Q^1_\kappa$; furthermore we introduce new source/target maps $s_{\Mg(Q)}^0[0],t_{\Mg(Q)}^0[0]:\Mg(Q)^1[0]\to\Mg(Q)^0$ coinciding with the original source/target maps on $Q^1$; as $\bullet^1_\eta\mapsto\bullet^0_\eta$ on $Q^1_\eta$; and as $s_{\Mg(Q)}^0[0]:(x^+,y,x^-)\mapsto x^-$, respectively $t_{\Mg(Q)}^0[0]:(x^+,y,x^-)\mapsto x^+$ on $Q^1_\kappa$. 

Finally we introduce additional free operadic multiplication-1-cells by   $\Mg(Q)^1:=\biguplus_{k=0}^{+\infty}\Mg(Q)^1[k]$, where we inductively have  $\Mg(Q)^1[k+1]:=\Big\{(x,\mu,y) \ | \ (x,y)\in 
\Mg(Q)^1[k_1]\times_{\hat{T}^\star(\bullet)}\hat{T}^\star(\Mg(Q)^1[k_2]), \ k_1+k_2=k\Big\}$;  
we recursively define the projection maps $\pi^1_{\Mg(Q)}[k+1]:(x,\mu,y)\mapsto \pi^1_{\Mg(Q)}[k](x)=\mu^\star_{\bullet}\left((\hat{T}^\star(\pi^1_{\Mg(Q)}[k]))(y)\right)$, targets $t^0_{\Mg(Q)}[k+1]:(x,\mu,y)\mapsto (t^0_{\Mg(Q)}[k])(x)$ and sources 
$s^0_{\Mg(Q)}[k+1]:(x,\mu,y)\mapsto \mu^\star_{P}\left((\hat{T}^\star(s^0[k]))(y)\right)$, for all 
$(x,y)\in \Pg(Q)^1[k]\times_{\hat{T}^\star(\bullet)}\hat{T}^\star(\Mg(Q)^1[k])$. 

\medskip 

Assuming now inductively the existence of $\Mg(Q)^n\xrightarrow{\pi_{\Mg(Q)}^n}\hat{T}^\star(\bullet)^n$, we construct $\Mg(Q)^{n+1}\xrightarrow{\pi_{\Mg(Q)}^{n+1}}\hat{T}^\star(\bullet)^{n+1}$: starting from $\Mg(Q)^{n+1}[0]:=Q^{n+1}\biguplus Q^{n+1}_\eta\biguplus Q^{n+1}_\kappa$, where
$Q^{n+1}_\eta:=\{\bullet^{n+1}_\eta\}$ is a singleton free operadic-unit \hbox{$(n+1)$-cell} and 
$Q^{n+1}_\kappa:=\Par(\pi^n_{\Mg(Q)})$ is a family of free contraction $(n+1)$-cells induced by the already defined projection map $\pi^n_{\Mg(Q)}:\Mg(Q)^n\to\hat{T}^\star(\bullet)$, 
we introduce free operadic-multiplication $(n+1)$-cells by the recursive nesting $\Mg(Q)^{n+1}[k+1]:=\Big\{
(x,\mu,y) \ | \ (x,y)\in 
\Mg(Q)^n[k_1]\times_{\hat{T}^\star(\bullet)}\hat{T}^\star(\Mg(Q)^n[k_2]), \ k_1+k_2=k
\Big\}$ and we get $\Mg(Q)^{n+1}:=\biguplus_{k=0}^{+\infty}\Mg(Q)^{n+1}[k]$. 
The projection map $\pi^{n+1}_{\Mg(Q)}$ is separately defined on each set of the disjoint union: it coincides with $\pi^{n+1}_Q$ on $Q^{n+1}\subset\Mg(Q)^{n+1}[0]$; it is $\bullet_\eta^{n+1}\mapsto \bullet^{n+1}$ on $Q^{n+1}_\eta$; it is $(x^+,y,x^-)\mapsto y$ on $Q^{n+1}_\kappa$; 
and it is recursively given  
by $\pi_{\Mg(Q)}^{n+1}:(x,\mu,y)\mapsto
\pi^{n}_{\Mg(Q)}[k](x)= \mu_{\hat{T}^\star}\left(\hat{T}^\star(\pi^{n}_{\Mg(Q)}[k])(y)\right)$
on the elements $(x,\mu,y)\in\Mg(Q)^{n+1}[k+1]$. 
Finally we obtain a globular $\hat{T}^\star$-collection with target/source maps given by  $t^{n+1}_{\Mg(Q)}[k+1]:(x,\mu,y)\mapsto (t^{n+1}_{\Mg(Q)}[k])(x)$ and  $s^{n+1}_{\Mg(Q)}[k+1](x,\mu,y):(x,\mu,y)\mapsto\mu^\star_{P}\left((\hat{T}^\star(s^{n+1}_{\Mg(Q)}[k]))(y)\right)$, where $(x,y)\in \Mg(Q)^n[k]\times_{\hat{T}^\star(\bullet)}\hat{T}^\star(\Mg(Q)^n[k])$. 

\medskip 

The inclusions $\xi_Q^n:Q^n\to Q^n\biguplus Q^n_\eta\biguplus Q^n_\kappa=\Mg(Q)^n[0]\subset \Mg(Q)^n=\biguplus_{k=0}^{+\infty}\Mg(Q)^n[k]$, for all $n\in\NN$ define level-by-level the map $\xi_Q:Q\to\Mg(Q)$ that is already a morphism of globular $\hat{T}^\star$-collections. 

\medskip 

b. 

We construct level-by-level the several structural maps involved in the definition of contracted $\hat{T}^\star$-operadic magma: the contraction $\kappa_{\Mg(Q)}$, the unit $\eta_{\Mg(Q)}$, the multiplication $\mu_{\Mg(Q)}$. 

\medskip 
 
The operadic unit $\eta_{\Mg(Q)}: \bullet \to \Mg(Q)$ is defined as $\eta^n_{\Mg(Q)}: \bullet^n\mapsto \bullet_\eta^n$, for all $n\in\NN$. 

\medskip 

The operadic multiplication 
$\mu_{\Mg(Q)}: \Mg(Q)\times_{\hat{T}^\star(\bullet)}\hat{T}^\star(\Mg(Q))\to \Mg(Q)$ is given, at each level $n\in\NN$, for all $k\in\NN$, by the maps 
$\mu^n_{\Mg(Q)}[k]: \Mg(Q)^n[k]\times_{\hat{T}^\star(\bullet)}\hat{T}^\star(\Mg(Q)^n[k])\to \Mg(Q)^n[k+1]$  given as $\mu^n_{\Mg(Q)}[k]: (x,y)\mapsto (x,\mu,y)$. 

\medskip 

The contraction $\kappa_{\Mg(Q)}: \Par(\pi_{\Mg(Q)})\to\Mg(Q)$ is provided by the maps $\kappa^n_{\Mg(Q)}: \Par(\pi^n_{\Mg(Q)})\to\Mg(Q)^{n+1}$ defined, for all $n\in\NN$, as inclusions  $\kappa^n_{\Mg(Q)}:Q_\kappa^n\to\Mg(Q)^{n+1}$. 

\medskip 

c. 

Here we deal with the universal factorization property of $Q\xrightarrow{\xi_Q}\Mg(Q)$: 
given another contracted $\hat{T}^\star$-operadic magma $\hat{M}\xrightarrow{\hat{\pi}}\hat{T}^\star(\bullet)$ with contraction $\hat{\kappa}: \Par(\hat{\pi})\to \hat{M}$, operadic unit $\hat{\eta}:\bullet\to\hat{M}$ and operadic multiplication $\hat{\mu}:\hat{M}\times_{\hat{T}^\star(\bullet)}\hat{T}^\star(\hat{M})\to\hat{M}$, we show the existence of a unique morphism $\hat{\phi}:\Mg(Q)\to \hat{M}$ of contracted $\hat{T}^\star$-operadic magmas such that $\phi=\hat{\phi}\circ\xi_Q$. 

\medskip 

Since, by construction, the inclusion $Q\xrightarrow{\xi_Q}\Mg(Q)$, maps every element $x\in Q^n$ to the same element $x\in\Mg(Q)^n$, for all $n\in\NN$, we must necessarily have that $\hat{\phi}(x):=x$, for all $x\in Q\subset\Mg(Q)$. 
Since $\hat{\phi}$ should preserve the unit, $\hat{\phi}\circ\eta_{\Mg(Q)}=\hat{\eta}$, the explicit construction of $\eta_{\Mg(Q)}$, necessarily entails  $\hat{\phi}:\eta_\Mg(\bullet)\mapsto\hat{\eta}(\bullet)$, for all the elements $\eta_{\Mg(Q)}(\bullet)\subset\Mg(Q)$. 
Similarly, since $\hat{\phi}$ should be contraction preserving, $\hat{\phi}\circ\kappa_{\Mg(Q)}=\hat{\kappa}\circ(\hat{\phi},\hat{\phi})$, the only possible choice for the restriction of $\hat{\phi}$ on $\kappa_{\Mg(Q)}(\Par(\pi_{\Mg(Q)}))\subset\Mg(Q)$ is: 
$\hat{\phi}(x^+,y,x^-)\mapsto \hat{\kappa}(\hat{\phi}(x^+),y,\hat{\phi}(x^-))$. \footnote{
Notice that, if $(x^+,y,x^-)\in\Par(\pi_{\Mg(Q)})^n$, we have $x^+,x^-\in\Mg(Q)^{n-1}$ and, since the construction of $(\Mg(Q),\pi_{\Mg(Q)},\kappa_{\Mg(Q)})$, is produced inductively, for all $n\in\NN$, the definition of $\hat{\phi}$ on the elements $\kappa_{\Mg(Q)}(\Par(\pi_{\Mg(Q)}))^n\subset\Mg(Q)^n$ requires only the knowledge of the already available map $\hat{\phi}^{n-1}$. 
} 
Since $\hat{\phi}$ should preserve multiplications $\hat{\phi}^n\circ(\mu_{\Mg(Q)})^n=\hat{\mu}^n\circ(\hat{\phi}^n,(\hat{T}^\star(\hat{\phi}))^n)$, for all $n\in\NN$, the map $\hat{\phi}^n$ should be uniquely defined as $(x,\mu,y)\mapsto \hat{\mu}(\hat{\phi}^n,\hat{T}^\star(\hat{\phi})^n(y))$ on the elements in $\mu_{\Mg(Q)}\left(\Mg(Q)\times_{\hat{T}^\star(\bullet)}\hat{T}^\star(\Mg(Q))^n\right)\subset \Mg(Q)^n$. \footnote{
Notice again that, because of the inductive definition of $\Mg(Q)$, the elements in $\Mg(Q)\times_{\hat{T}^\star(\bullet)}\hat{T}^\star(\Mg(Q))^n$ only require the knowledge of already defined $\hat{\phi}^k$, for all $k\leq n$ on . 
} 
The already uniquely constructed $\hat{\phi}:\Mg(Q)\to\hat{M}$ is a morphism of globular contracted $\hat{T}^\star$-operadic magmas that also satisfies $\phi=\hat{\phi}\circ\xi_Q$.

\medskip 

d. 

The previously constructed free contracted $\hat{T}^\star$-operadic magma $\Mg(Q)$ is not yet a free contracted operad because its free unit $\eta_{\Mg(Q)}:\bullet\to\Mg(Q)$ and free multiplication $\mu_{\Mg(Q)}:\Mg(Q)\circ^1_0\Mg(Q)\to\Mg(Q)$ fail to satisfy the unitality and associativity axioms for a monad in the bicategory $\Qf_{\hat{T}^\star}$ as specified by the commuting diagrams in definition~\ref{def: bicat-monad}, in detail, denoting 
the associators and left/right unitors morphisms by 
$(\Mg(Q)\circ^1_0\Mg(Q))\circ_0^1\Mg(Q)\xrightarrow{\alpha_{\Mg(Q)}}\Mg(Q)\circ^1_0(\Mg(Q)\circ_0^1\Mg(Q))$ and $\iota^1(\bullet)\circ^1_0\Mg(Q) \xrightarrow{\lambda_{\Mg(Q)}}\Mg(Q) \xleftarrow{\rho_{\Mg(Q)}}\Mg(Q)\circ^1_0\iota^1(\bullet)$, we need to get identified all the pairs of terms in $\Mg(Q)$ contained in the following family $\X\subset \Mg(Q)\times\Mg(Q)$: 
\begin{align}\notag 
\X&:=\Big\{
\left(\lambda_{\Mg(Q)}(x_1)\ ,\ \mu_{\Mg(Q)}\left((\eta_{\Mg(Q)}\circ^2_0\iota^1_{\Mg(Q)})(x_1)\right)\right) \ \big| \ x_1\in \iota^1(\bullet)\circ^1_0\Mg(Q) 
\Big\} 
\ \cup
\\ \label{eq: X}
&\Big\{ \left(
\rho_{\Mg(Q)}(x_2)
\ , \ 
\mu_{\Mg(Q)}\left((\iota^1_{\Mg(Q)}\circ^2_0\eta_{\Mg(Q)})(x_2)\right)
\right)
\ \big| \ x_2\in \Mg(Q)\circ^1_0\iota^1(\bullet) 
\Big\}
\ \cup
\\ \notag 
&\Big\{
\left(
\mu_{\Mg(Q)}((\iota^1_{\Mg(Q)}\circ^2_0\mu_{\Mg(Q)})\circ\alpha_{\Mg(Q)}(x_3)) 
\ , \ 
\mu_{\Mg(Q)}((\mu_{\Mg(Q)}\circ^2_0\iota^1_{\Mg(Q)})(x_3))
\right)
\ \big| \ 
x_3\in (\Mg(Q)\circ^1_0\Mg(Q))\circ_0^1\Mg(Q)  
\Big\}.
\end{align} 

\medskip 

To solve this problem, we define $\E_\X$ as the \textit{smallest congruence of contracted $\hat{T}^\star$-operadic magma} in $\Mg(Q)$ containing all the pairs of terms in $\X$. 
From remark~\ref{rem: q-cong}  $\hat{T}^\star(\bullet)\xrightarrow{\pi_{\hat{T}^\star(\bullet)}}\hat{T}^\star(\bullet)$ is a contracted $\hat{T}^\star$-operad and, since $\Mg(Q)\xrightarrow{\pi_{\Mg(Q)}}\hat{T}^\star(\bullet)$ is morphism of contracted $\hat{T}^\star$-operadic magmas, by proposition~\ref{prop: q-cong} we know that the canonical equivalence relation $\E_{\pi_{\Mg(Q)}}:=\Mg(Q)\times_{\pi_{\Mg(Q)}}\Mg(Q)=\left\{(x,y)\in\Mg(Q)\times\Mg(Q) \ | \ \pi_{\Mg(Q)}(x)=\pi_{\Mg(Q)}(y) \right\}$ is itself a congruence of contracted $\hat{T}^\star$-operadic magmas with $\X\subset\E_{\pi_{\Mg(Q)}}$ and hence $\E_\X$ can be taken as the intersection of the (non-empty) family of all such congruences containing $\X$ in $\Mg(Q)$. 

\medskip 

By proposition~\ref{prop: q-cong}, the quotient map $\Mg(Q)\xrightarrow{\pi_{\E_\X}}\Pg(Q):=\Mg(Q)/\E_\X$ is a morphism of contracted $\hat{T}^\star$-operadic magmas and, since $\X\subset\E_\X$, the quotient $\Pg(Q):=\Mg(Q)/\E_\X\xrightarrow{\pi_{\Pg(Q)}}\hat{T}^\star(\bullet)$, where $\pi_{\Pg(Q)}$ is the only morphism of contracted $\hat{T}^\star$-operadic magmas such that $\pi_{\Mg(Q)}=\pi_{\Pg(Q)}\circ\pi_{\E_\X}$, is not only a contracted $\hat{T}^\star$-operadic magma, but it is already a contracted $\hat{T}^\star$-operad. 

\medskip 

The inclusion $Q\xrightarrow{\zeta_Q}\Pg(Q)$ given by $\zeta_Q:=\pi_{\E_\X}\circ\xi_Q$ is, by composition, a morphism of globular $\omega$-sets. 

\bigskip 

e. 

Suppose that $Q\xrightarrow{\phi}\hat{P}$ is a morphism of $\hat{T}^\star$-collections into another contracted $\hat{T}^\star$-operad $\hat{P}\xrightarrow{\hat{\pi}_P}\hat{T}^\star(\bullet)$ with contraction $\hat{\kappa}_P:\Par(\hat{\pi}_P)\to\hat{P}$, operadic unit $\hat{\eta}_P:\bullet\to \hat{P}$ and operadic multiplication $\hat{\mu}_{P}:\hat{P}\circ^1_0\hat{P}\to \hat{P}$. 

\medskip 

Since (forgetting the associativity and unitality axioms) every contracted $\hat{T}^\star$-operad is a contracted $\hat{T}^\star$-operadic magma, by the previous point c.~above, there exists a unique morphism $\Mg(Q)\xrightarrow{\tilde{\phi}}\hat{P}$ of contracted $\hat{T}^\star$-operadic magmas, defined on the free contracted $\hat{T}^\star$-operadic magma $\Mg(Q)$, such that $\phi=\tilde{\phi}\circ\xi_Q$. 

\medskip 

The equivalence relation $\E_{\tilde{\phi}}:=\Mg(Q)\times_{\tilde{\phi}}\Mg(Q)=\{(x,y)\in\Mg(Q)\times\Mg(Q) \ | \ \tilde{\phi}(x)=\tilde{\phi}(y)\}$ induced by the morphism $\Mg(Q)\xrightarrow{\tilde{\phi}}\hat{P}$ into the contracted $\hat{T}^\star$-operad $\hat{P}$, is a congruence of contracted $\hat{T}^\star$-operadic magma that contains all the terms $\X$ generating $\E_\X$ and hence, by the minimality of $\E_\X$, we have $\E_\X\subset\E_{\tilde{\phi}}$. It follows, by proposition~\ref{prop: q-cong}, that there exists a unique quotient morphism $\Pg(Q)\xrightarrow{\hat{\phi}}\hat{P}$ of contracted $\hat{T}^\star$-operads, given by $\hat{\phi}(\pi_{\E_\X}(x)):=\tilde{\phi}(x)$, for all $x\in\Mg(Q)$. 
The universal factorization property for free $\hat{T}^\star$-operads over $Q$ is satisfied since: 
$\hat{\phi}\circ\zeta_Q=\hat{\phi}\circ\pi_{\E_\X}\circ\xi_Q=\tilde{\phi}\circ\xi_Q=\phi$. 
\end{proof}

\begin{remark}
The proof of theorem~\ref{th: free-c-T*} could be obtained mimicking Leinster's techniques in section~\ref{sec: L-w-T}. Specifically, applying lemma~\ref{lem: kelly}
after showing that:  
a) the category $\Qf^{\hat{T}^\star}_\bullet$ is locally finitely presentable; 
b) $\hat{T}^\star$ is a finitary Cartesian monad (and hence 
$\Ug^\star_\Of$ is monadic)  
c) $\Ug^\star_\Of$ is finitary
d) $\Ug^\star_\kappa$ is monadic and finitary. 
\xqed{\lrcorner}
\end{remark}

\medskip 

Our main definition in the $T^\star$ case, is in perfect analogy with Leinsteir's definition~\ref{def: w-omega}: 
\begin{definition}
A \emph{weak involutive globular $\omega$-category} is an algebra for an initial object $L^\star$ in  $\Of^{T^\star, \kappa}_\bullet$. 
\end{definition}

\begin{remark} \label{rem: ope-con-fin}
It is perfectly possible to utilize the category $\Of\Kf^{\hat{T}^\star}_\bullet$ introducted in remark~\ref{rem: ope-con*} instead of $\Of^{\hat{T}^\star,\kappa}_\bullet$ in order to define a slightly more restrictive notion of involutive weak globular $\omega$-category as an algebra for the initial object in $\Of\Kf^{\hat{T}^\star}_\bullet$.  

\medskip 

The existence of such initial object can be obtained with techniques perfectly similar to those utilized in theorems~\ref{th: L*} and in the proof of theorem~\ref{th: free-c-T*}, just adding to the family $\X$ in equation~\eqref{eq: X} all the pairs of terms of $\Mg(Q)$ required for the validity of the additional axioms imposed by the commuting diagrams~\eqref{eq: L*+} and quotienting for the smallest congruence of $\hat{T}^\star$-operadic contraction containing $\X$ (such minimal congruence always exists because $\hat{T}^\star(\bullet)$ is a terminal object in $\Of\Kf^{\hat{T}^\star}_\bullet$). 
\xqed{\lrcorner}
\end{remark}

\subsection{Examples} 

The strategy used to provide examples of weak involutive globular $\omega$-categories is perfectly parallel to the one described in section~\ref{sec: ex-T} and consists in producing: 
a contracted $\hat{T}^\star$-operad $P^\star$ and an algebra $X^\star$ over it.  

\medskip 

We just mention here some immediate available examples of involutive weak globular $\omega$-categories. 
\begin{itemize}
\item[$\blacktriangleright$] 
\emph{strict involutive globular $\omega$-categories}: 
again, these coincide with algebras for the monad given by the terminal contracted-$\hat{T}^\star$-operad $\hat{T}^\star(\bullet)\in\Of^{\hat{T}^\star,\kappa}_\bullet$ described in remark~\ref{rem: q-cong}. 
\item[$\blacktriangleright$] 
\emph{globular $\omega$-spans}: following the notation introduced in~\cite[example 4.7]{BCLS20} we define an $\omega$-span as a sequence $(A^{n}\xleftarrow{t^{n}}Q^{n+1}\xrightarrow{s^{n}}B^{n})_{n\in\NN}$ of 1-spans, where $Q^n:=A^n\cup B^n$; a globular $\omega$-span is an $\omega$-span $Q:=(Q^{n+1}\rightrightarrows Q^{n})$ that is a globular $\omega$-set (in this way any $x\in Q^n$ determines, with all its sources/targets a unique globular $n$-cell). 
Introducing level-by-level the equivalence relation that identifies $(n+1)$-cells having the same source/target sets, we obtain a quotient $\omega$-globular set $X$. A family of globular $\omega$-spans is hence determined by the quotient morphism $Q\xrightarrow{\chi}X$.  
Considering such family $Q\xrightarrow{\chi}X\xrightarrow{!}\bullet$ as ``generators'', we apply the free involutive magma functor (as constructed in point a.~of the proof of proposition~\ref{prop: free-inv-omega-cat}) to get $\Mg(Q)\xrightarrow{\Mg(\chi)}\Mg(X)\xrightarrow{\Mg(!)}\Mg(\bullet)$, applying the forgetful functor to the category of globular $\omega$-sets (that we omit to indicate) and using the quotient projection (constructed in point b.~of proposition~\ref{prop: free-inv-omega-cat}) onto $\hat{T}^\star(Q)\xrightarrow{\hat{T}^\star(\chi)}\hat{T}^\star(X)\xrightarrow{\hat{T}^\star(!)}\hat{T}^\star(\bullet)$, we obtain a morphism of $\hat{T}^\star$-collections as in the first diagram below: 
\begin{equation*}\vcenter{
\xymatrix{
\Mg(Q)\ar[rr]^{\Mg(\chi)}\ar[dr]^{} & &\Mg(X)\ar[dl]_{} 
\\ 
& \hat{T}^\star(\bullet) &
}
}
\quad \mapsto \quad 
\vcenter{
\xymatrix{
\Pg(Q)\ar[rr]^{\Pg(\chi)}& &\Pg(X) 
}
} 
\end{equation*}
The \textit{involutive weak globular $\omega$-category of globular $\omega$-spans generated by $Q\xrightarrow{\chi}X$} is given by the morphism of free globular contracted $\hat{T}^\star$-operads to the right of the diagram above (considered as algebras over themselves). Keeping track of the projection onto $X$ and $\Pg(X)$ is necessary to ``coarse grain'' recovering the original spans and operations between them. 
\item[$\blacktriangleright$] 
\emph{homotopy $\omega$-groupoids} $\Pi_\omega(X)$ of a topological space $X$: here, since the involutions coincide with the weak inverse homotopies, one just show that the contracted $\hat{T}$-operad utilized in~\cite[example 9.2.7]{Lei04} is actually a contracted $\hat{T}^\star$-operad. 
\end{itemize}

\section{Outlook}\label{sec: out} 

The construction of algebraic involutive versions of weak (globular) $\omega$-categories (either in Penon's or in Leinster's approaches), as done in our previous work~\cite{BeBe17} and in the present paper, is only the very first step in the direction of a full operator algebraic categorical environment suitable for the needs of \textit{categorical non-commutative geometry}~\cite{BCL08}, \cite{BCL12}. 

\medskip 

Involutions, in the case of (weak) cubical $\omega$-categories, are currently under investigation,\footnote{
Bejrakarbum P, Bertozzini P, Theesoongnern S, \textit{Involutive Weak Cubical/Globular $\omega$-categories} (work in progress).
} 
following recent work by~\cite{Ka22}, 
including the study of conditions (see~\cite{ABS02}) assuring the equivalence between cubical and globular \textit{involutive} settings, extending previous still unpublished results already achieved in the case of involutive 2-categories / double categories~\cite{BCDM14}. 

\medskip

Immediate further developments of the present work will concentrate on possible algebraic definitions and examples of involutive weak $\omega$-algebroids; subsequently the treatment of uniform structures related to completeness and norms necessary for the formulation of weak $\omega$-C*-algebroids will have to be addressed, generalizing (and possibly modifying) the strict $n$-C*-categorical notions tentatively put forward in~\cite[section~5]{BCLS20}. 

\medskip 

In the present paper, for simplicity, we have only considered monads and operads that do not possess involutive symmetries, but it is already evident that certain ``covariant'' involutions could have been introduced at the level of the operads in $\Of^{\hat{T}^\star}$. Certain involutive monads and operads (see~\cite[chapter 4]{Ya20} and references therein) can be used for this purpose. A full treatment of \textit{involutive bicategories} and the discussion of \textit{covariant vs contravariant involutive monads and operads in a bicategory} will be separately addressed in a companion paper~\footnote{
Bejrakarboom P, Bertozzini P, Puttirungroj C, \textit{Involutive Monads and Hybrid Categories} (work in progress). 
} 
making direct use of hybrid-categories as put forward in~\cite{BePu14}. 

\medskip 

In this work we have not considered any relaxing of the usual axioms for globular higher categories, in particular we did not formulate a definition of weak involutive globular $\omega$-categories with \textit{non-commutative exchange property}, as already proposed in~\cite[section 3.3]{BCLS20} for strict globular $n$-categories. We suspect that treatments of versions  of \textit{non-commutative derived geometries} (homotopies, cobordisms, holonomies) will require some axiomatic modification in such direction. 

\bigskip 

\emph{Notes and Acknowledgments:} 
P. Bejrakarbum is supported by the ``Research Professional Development Project'' under the ``Science Achievement Scholarship of Thailand'' (SAST).  

P. Bertozzini thanks Starbucks Coffee (Langsuan, Jasmine City, Gaysorn Plaza, Emquartier Sky Garden) where he spent most of the time dedicated to this research project; he thanks Fiorentino Conte of ``The Melting Clock'' for the great hospitality during many crucial on-line dinner-time meetings.  

{\small

}

\end{document}